\newcommand{\xyR}[1]{\xydef@\xymatrixrowsep@{#1}}
\newcommand{\xyC}[1]{\xydef@\xymatrixcolsep@{#1}}
\newtheorem{theorem}{Theorem}[section]
\newtheorem{lemma}[theorem]{Lemma}
\newtheorem{corollary}[theorem]{Corollary}
\newtheorem{proposition}[theorem]{Proposition}
\theoremstyle{definition}
\newtheorem{definition}[theorem]{Definition}
\newtheorem{example}[theorem]{Example}
\newtheorem{notation}[theorem]{Notation}
\newtheorem{assumption}[theorem]{Assumption}
\theoremstyle{remark}
\newtheorem{remark}[theorem]{Remark}
\numberwithin{equation}{section}
\begin{document}
\title[Characterising $\Sigma$-pure-injectivity \& endoperfection.]{Characterisations of {$\Sigma$}-pure-injectivity in triangulated categories and applications to endoperfect objects.}
    \author{Raphael Bennett-Tennenhaus.}
\address{  R. Bennett-Tennenhaus\\
Faculty of Mathematics\\
Bielefeld University\\
Universit{\"{a}}t sstra{\ss}e 25\\
33615 Bielefeld\\
 Germany}
\email{raphaelbennetttennenhaus@gmail.com}

\subjclass[2020]{18E45, 18G80 (primary), 03C60 (secondary)}

\begin{abstract}
We provide various ways to characterise $\Sigma$-pure-injective objects in a compactly generated triangulated category. These characterisations mimic analogous well-known results from the model theory of modules. The proof involves two approaches. In the first approach we adapt arguments from the module-theoretic setting. Here the one-sorted language of modules over a fixed ring is replaced with a canonical multi-sorted language, whose sorts are given by compact objects. Throughout we use a variation of the Yoneda embedding, called the resticted Yoneda functor, which associates a multi-sorted structure to each object.  The second approach is to translate statements using this functor. In particular, results about $\Sigma$-pure-injectives in triangulated categories are deduced from results about $\Sigma$-injective objects in Grothendieck categories. Combining the two approaches highlights a connection between sorted pp-definable subgroups and annihilator subobjects of generators in the functor category. Our characterisation motivates the introduction of what we call endoperfect objects, which generalise endofinite objects.
\end{abstract}

\maketitle

\section{Introduction.}
The model theory of modules refers to the specification of model theory to the module-theoretic setting.  Fundamental work, such as that of Baur \cite{Bau1976}, placed focus on certain formulas in the language of modules, known as pp-formulas. In particular, module embeddings which reflect solutions to pp-formulas, so-called pure embeddings, became of particular interest. This served as motivation to study modules which are pure-injective: that is, injective with respect to pure embeddings. 

In famous work of Ziegler \cite{Zie1984}, a topological space was defined whose points are indecomposable pure-injective modules. The introduction of the Ziegler spectrum proved to be a groudbreaking moment in this branch of model-theoretic algebra, and interest in understanding pure-injectivity has since grown. 
Specifically, in work such as that of Huisgen-Zimmerman \cite{Hui1979}, functional results appeared in which pure-injective and so-called $\Sigma$-pure-injective modules were characterised. These characterisations are well documented, for example, by Jensen and Lenzing \cite{JenLen1989}. 

A frequently used tool in these characterisations is the relationship between a module and its image in a certain functor category. To explicate, the functor is given by the tensor product, restricted to the full subcategory of finitely presented modules. For example, a module is pure-injective if and only if the corresponding tensor functor is injective. Subsequently one may convert statements about pure-injective modules into statements about injective objects in Grothendieck categories, and translate problems and solutions back and forth.

For example, Garcia and Dung \cite{GarDun1994} developed the understanding of $\Sigma$-injective objects in Grothendieck categories by building on work of Harada \cite{Har1973}, which generalised a famous characterisation of $\Sigma$-injective modules going back to Faith \cite{Fai1966}. These authors showed that, as above, such developments helped simplify arguments about $\Sigma$-pure-injective modules. 

In this article we attempt to provide, in a utilitarian manner, some analouges to the previously mentioned characterisations. The difference here is that, instead of working in a category of modules, we work in a triangulated category which, in a particular sense, is compactly generated. Never-the-less, the statements we prove and arguments used to prove them are motivated directly from certain module-theoretic counterparts. 

The notion of compactness we refer to comes from work of Neeman \cite{Nee1992}, where the idea was adapted from algebraic topology. Krause \cite{Kra2002} provided the definitions of pure monomorphisms, pure-injective objects and the Ziegler spectrum of a compactly generated triangulated category. Garkusha and Prest \cite{GarPre2005}  subsequently introduced a multi-sorted language for this setting, which mimics the role played by the language of modules. They then gave a correspondence between the pp-formulas in this multi-sorted language and coherent functors. In our main result, Theorem \ref{characterisation}, we use the following notation and assumptions.
\begin{itemize}
\item $\mathcal{T}$ is a compactly generated triangulated category with all small coproducts.
\item $\mathcal{T}^{c}$ is the full subcategory of $\mathcal{T}$ consisting of compact objects, which is assumed to be skeletally small.
\item $\mathbf{Ab}$ is the category of abelian groups.
\item $\mathbf{Mod}\text{-}\mathcal{T}^{c}$ is the category of contravariant additive functors $\mathcal{T}^{c}\to \mathbf{Ab}$.
\item $\mathtt{G}$ is a set of generators of $\mathbf{Mod}\text{-}\mathcal{T}^{c}$ where each $\mathscr{G}\in\mathtt{G}$ is finitely presented.
\item $\mathbf{Y}\colon\mathcal{T}\to\mathbf{Mod}\text{-}\mathcal{T}^{c}$ is the restricted Yoenda functor, which takes an object $M$ to the restriction of the corepresentable $\mathcal{T}(-,M)$.
\end{itemize}
Recall that, by the \textit{Brown representability theorem}, since the category $\mathcal{T}$ has all small coproducts, it has all small products; see Remark \ref{brownproducts}.
\begin{theorem}\label{characterisation}
For any object $M$ of $\mathcal{T}$ the following statements are equivalent.
\begin{enumerate}
\item For any set $\mathtt{I}$ the coproduct $M^{(\mathtt{I})}=\coprod_{\mathtt{I}}M$ is pure-injective.
\item The countable coproduct $M^{(\mathbb{N})}$ is pure-injective.
\item For any generator $\mathscr{G}\in\mathtt{G}$ each ascending chain of $\mathbf{Y}(M)$-annihilator subobjects of $\mathscr{G}$ must eventually stabilise.
\item For any set $\mathtt{I}$ the morphism from $M^{(\mathtt{I})}$ to the product $M^{\mathtt{I}}=\prod_{\mathtt{I}}M$, given by the universal property, is a section.
\item For any object $X$ of $\mathcal{T}^{c}$ each descending chain of pp-definable subgroups of $M$ of sort $X$ must eventually stabilise.
\item $M$ is pure injective, and for any set $\mathtt{I}$, $M^{\mathtt{I}}$ is isomorphic to a coproduct of indecomposable pure-injective objects with local endomorphism rings.
\end{enumerate}
\end{theorem}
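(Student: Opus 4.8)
The plan is to translate the problem through the restricted Yoneda functor $\mathbf{Y}$, read off most of the equivalences from the classical theory of $\Sigma$-injective objects in Grothendieck categories, and fold in the model-theoretic condition $(5)$ via the Garkusha--Prest dictionary between pp-formulas and coherent functors.

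First I would record the translation facts: $\mathbf{Y}$ preserves small coproducts (the objects of $\mathcal{T}^{c}$ being compact) and small products (hom-functors preserving products), so that $\mathbf{Y}(M^{(\mathtt{I})})\cong\mathbf{Y}(M)^{(\mathtt{I})}$ and $\mathbf{Y}(M^{\mathtt{I}})\cong\mathbf{Y}(M)^{\mathtt{I}}$, and the canonical map $M^{(\mathtt{I})}\to M^{\mathtt{I}}$ goes to the canonical monomorphism $\mathbf{Y}(M)^{(\mathtt{I})}\to\mathbf{Y}(M)^{\mathtt{I}}$, hence is a pure monomorphism. Then I would invoke Krause's results: $M$ is pure-injective if and only if $\mathbf{Y}(M)$ is injective; $\mathbf{Y}$ is full and faithful on morphisms with pure-injective target, and restricts to an equivalence from the pure-injective objects of $\mathcal{T}$ onto the injective objects of $\mathbf{Mod}\text{-}\mathcal{T}^{c}$; and an indecomposable pure-injective object of $\mathcal{T}$ has local endomorphism ring. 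Finally I would quote the classical circle of equivalences for an object $E$ of a Grothendieck category with a generating set of finitely presented objects (Faith, Harada, Garcia--Dung): $E$ is $\Sigma$-injective $\Leftrightarrow$ $E^{(\mathbb{N})}$ is injective $\Leftrightarrow$ for each generator $\mathscr{G}$ the poset of $E$-annihilator subobjects of $\mathscr{G}$ is noetherian $\Leftrightarrow$ every canonical map $E^{(\mathtt{I})}\to E^{\mathtt{I}}$ is split $\Leftrightarrow$ $E$ is injective and each power $E^{\mathtt{I}}$ decomposes as a coproduct of indecomposable injectives with local endomorphism rings. Applying this to $E=\mathbf{Y}(M)$ and transporting back along $\mathbf{Y}$ gives the equivalence of $(1)$, $(2)$, $(3)$, $(4)$ and $(6)$; the only points needing care are that $M^{\mathtt{I}}$, and any summand in a splitting, is pure-injective, so that the retractions and the indecomposable summands of $(6)$ lie where $\mathbf{Y}$ is an equivalence, and that the ``split'' clause of the circle matches $(4)$, whose converse implication $(1)\Rightarrow(4)$ is anyway immediate since a canonical pure monomorphism out of a pure-injective object splits.

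It remains to place $(5)$ into this circle, which I expect to be the main obstacle. One route is to prove $(1)\Leftrightarrow(5)$ directly in $\mathcal{T}$, transcribing the Faith--Zimmermann argument for $\Sigma$-pure-injective modules into the multi-sorted language of Garkusha and Prest: a strictly descending chain of pp-definable subgroups of $M$ of some sort $X$ yields a pure monomorphism into $M^{(\mathbb{N})}$ that does not split, while the descending chain condition on all sorts forces pp-types over every coproduct power of $M$ to be finitely generated, hence realised, so that every such power is pure-injective. The second, more illuminating route is to prove $(3)\Leftrightarrow(5)$ directly: using the Garkusha--Prest correspondence between pp-formulas of sort $X\in\mathcal{T}^{c}$ and coherent functors, together with a chosen finite presentation of each generator $\mathscr{G}\in\mathtt{G}$, one sets up an inclusion-reversing bijection between the pp-definable subgroups of $M$ of the various sorts $X$ and the $\mathbf{Y}(M)$-annihilator subobjects of the various generators $\mathscr{G}$ --- the passage from a pp-formula to its solution set in $\mathbf{Y}(M)(X)$, and the passage from a set of morphisms $\mathscr{G}\to\mathbf{Y}(M)$ to its annihilator subobject of $\mathscr{G}$, being the two sides of one Galois connection. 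Under this bijection a strictly descending chain as in $(5)$ is matched with a strictly ascending chain as in $(3)$, and, since $\mathtt{G}$ generates by finitely presented objects, quantifying over all compact sorts corresponds to quantifying over all generators; so the two chain conditions are equivalent. Carrying this out is exactly what makes precise the advertised connection between sorted pp-definable subgroups and annihilator subobjects of generators.

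The genuine work in the second route lies in: showing that every $\mathbf{Y}(M)$-annihilator subobject of a generator does arise from a pp-formula, where finite presentability of the $\mathscr{G}$ and the coherence of the functors attached to pp-formulas are essential; controlling the passage between the two families of index objects, including the reduction from arbitrary finitely presented objects to the fixed generating set $\mathtt{G}$; and matching the partial orders and the directions of the chain conditions, one side being phrased through kernels and annihilators and the other through images and solution sets. Once $(3)\Leftrightarrow(5)$ (or, alternatively, $(1)\Leftrightarrow(5)$) is in hand, combining it with the previous step closes the chain of equivalences.
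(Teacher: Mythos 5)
Your overall strategy --- transporting the problem along $\mathbf{Y}$, quoting Harada and Garcia--Dung for the $\Sigma$-injective circle, and linking $(5)$ to $(3)$ via the correspondence between pp-definable subgroups and annihilator subobjects --- matches the paper for conditions $(1)$--$(5)$. In particular your second route for $(5)$ is essentially what the paper does: it proves (Lemma \ref{annsubmodTdcc}) that a strictly ascending chain of $\mathbf{Y}(M)$-annihilator subobjects of a finitely presented generator produces a strictly descending chain of pp-definable subgroups of the sort given by the presentation, using the Garkusha--Prest pseudocokernel argument and the fact that finite intersections of pp-definable subgroups are again pp-definable. Note, though, that the paper does not establish (and does not need) the full inclusion-reversing \emph{bijection} you propose; it only needs the one-directional transfer of chain conditions, and the other implication $(1)\Rightarrow(5)$ is proved by a separate direct argument in $\mathcal{T}$ (Lemma \ref{sigmathendcc}). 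Your first route for $(5)\Rightarrow(1)$, via finitely generated pp-types being realised, would require developing a theory of pp-types and their realisation in this triangulated setting that neither you nor the paper supplies.

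The genuine gap is condition $(6)$, specifically the implication $(6)\Rightarrow(1)$. You fold the clause ``$E$ is injective and each power $E^{\mathtt{I}}$ decomposes as a coproduct of indecomposable injectives with local endomorphism rings'' into a ``classical circle'' attributed to Faith, Harada and Garcia--Dung, but that equivalence is not in those references: for modules it is Huisgen-Zimmerman's theorem \cite{Hui1979}, and no off-the-shelf Grothendieck-category version is cited or known to be quotable in the generality needed here. Establishing $(6)\Rightarrow(1)$ is precisely the technical core of the paper: one must adapt \cite[Lemmas 4 and 5]{Hui1979} to the multi-sorted setting (Lemmas \ref{zimmtech2} and \ref{zimmtech3}), using the compatibility of pp-definable subgroups with products and coproducts (Lemma \ref{ppprops2}) and the Krull--Remak--Schmidt--Azumaya-type exchange property for coproducts of indecomposable pure-injectives (Corollary \ref{decompcorr}), to conclude that if $M^{\mathbb{N}}$ decomposes into indecomposable pure-injectives then every descending chain of pp-definable subgroups of $M$ stabilises (Corollary \ref{CARDtensigma}). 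Your proposal supplies no argument for this step, and the ``points needing care'' you list (pure-injectivity of $M^{\mathtt{I}}$ and of summands) do not address it. The forward direction $(1)\Rightarrow(6)$ is fine via Garcia--Dung's decomposition of $\Sigma$-injectives and the equivalence on injectives, as you indicate.
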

The proof of Theorem \ref{characterisation} is at the end of the article. The equivalences of (1), (2), (3) and (4) in Theorem \ref{characterisation} follow by directly combining work of Garcia and Dung \cite{GarDun1994} and work of Krause \cite{Kra2002}. The equivalence of (5) and (6) with the previous conditions is more involved. For (5) we adapt ideas going back to Faith \cite{Fai1966}, whilst applying results due to Harada \cite{Har1973} and Garcia and Dung \cite{GarDun1994}. For (6) we adapt arguments of Huisgen-Zimmerman \cite{Hui1979}.

The article is organised as follows. In \S\ref{1} we recall some prerequisite terminology from multi-sorted model theory. In \S\ref{2} we specify to compactly generated triangulated categories by recalling the canonical multi-sorted language of Garkusha and Prest \cite{GarPre2005}. In \S\ref{3} we gather results about products and coproducts of pp-definable subgroups in this context, following ideas of Huisgen-Zimmerman \cite{Hui1979}. In \S\ref{4} we highlight a connection between annihilator subobjects of finitely generated functors and pp-definable subgroups, where the presentation of the functor determines the sort of the subgroup. In \S\ref{4-1} we begin combining the results developed in the previous sections with results of Krause \cite{Kra2002}. In \S\ref{5} we complete the proof of Theorem \ref{characterisation}. In \S\ref{applicationstoendoperfect} we introduce \emph{endoperfect} objects, and see applications of Theorem \ref{characterisation}.

\section{Multi-sorted languages, structures and homomorphisms.}\label{1}
There are various module-theoretic characterisations of purity in terms of \textit{positive}-\textit{primitive} formulas in the underlying \textit{one}-\textit{sorted} language of modules over a ring. Similarly, purity in compactly generated triangulated categories may be discussed in terms of formulas in a multi-sorted language. Although Definitions \ref{sorts}, \ref{sorts2-1}, \ref{sorts2}, \ref{sorts3}  and \ref{sorts4} are well-known, we recall them for completeness. We closely follow \cite[\S2, \S 7]{DGN2016} for consistency.

\begin{definition}\label{sorts}\cite[Definition 34]{DGN2016} For a non-empty set $\mathtt{S}$, an $\mathtt{S}$-\textit{sorted predicate language} $\mathfrak{L}$ is a tuple $\langle\mathrm{pred}_{\mathtt{S}},\,\mathrm{func}_{\mathtt{S}},\,\mathrm{ar}_{\mathtt{S}},\,\mathrm{sort}_{\mathtt{S}}\rangle$ where: 
\begin{enumerate}
\item each $s\in\mathtt{S}$ is called a \textit{sort}; 
\item the symbol $\mathrm{pred}_{\mathtt{S}}$ denotes a non-empty set of \textit{sorted predicate symbols}; 
\item the symbol $\mathrm{func}_{\mathtt{S}}$ denotes a set of \textit{sorted function symbols}; 
\item the symbol $\mathrm{ar}_{\mathtt{S}}$ denotes an \textit{arity} function $\mathrm{pred}_{\mathtt{S}}\sqcup\mathrm{func}_{\mathtt{S}}\to\mathbb{N}=\{0,1,2,\dots\}$;
\item the symbol $\mathrm{sort}_{\mathtt{S}}$ denotes a \textit{sort} function, taking any $n$-ary $R\in\mathrm{pred}_{\mathtt{S}}$ (respectively $F\in\mathrm{func}_{\mathtt{S}}$) to a sequence in $\mathtt{S}$ of length $n$ (respectively $n+1$).
\end{enumerate}
When $n>0$ in condition (5) we often write $\mathrm{sort}_{\mathtt{S}}(R)=(s_{1},\dots,s_{n})$ (respectively $\mathrm{sort}_{\mathtt{S}}(F)=(s_{1},\dots,s_{n},s)$). Note that functions $F$ with $\mathrm{ar}_{\mathtt{S}}(F)=0$ have a sort.

For each sort $s$ we introduce a countable set $\mathcal{V}_{s}$ of \textit{variables of sort} $s$. The \textit{terms} of $\mathfrak{L}$ each have their own sort, and are defined inductively by stipulating: any variable  $x$ of sort $s$ will be considered a term of sort $s$; and for any $F\in\mathrm{func}_{\mathtt{S}}$ with $\mathrm{sort}_{\mathtt{S}}(F)=(s_{1},\dots,s_{n},s)$ and for any terms $t_{1},\dots,t_{n}$ of sort $s_{1},\dots,s_{n}$ respectively, $F(t_{1},\dots,t_{n})$ is considered a term of sort $s$. Note that \textit{constant symbols}, given by functions $F$ with $\mathrm{ar}_{\mathtt{S}}(F)=0$, are also terms.

The \textit{atomic formulas} with which $\mathfrak{L}$ is equipped are built from the equality $t=_{s}t'$ between terms $t,t'$ of common sort $s$, together with the formulas $R(t_{1},\dots,t_{n})$ where $R \in\mathrm{pred}_{\mathtt{S}}$, $\mathrm{sort}_{\mathtt{S}}(R)=(s_{1},\dots,s_{n})$ and where each $t_{i}$ is a term of sort $s_{i}$. First-order formulas $\varphi$ in $\mathfrak{L}$ are built from: the variables of each sort; the atomic formulas; binary connectives $\wedge$, $\vee$, and $\implies$; negation $\neg$; and the quantifiers $\forall$ and $\exists$.  
 
A \textit{positive}-\textit{primitive} or \textit{pp} formula $\varphi(x_{1},\dots,x_{n})$ (with $x_{i}$ free) has the form
\[\begin{array}{c}
\exists\, w_{n+1},\dots,w_{m}\colon\bigwedge_{j=1}^{k}\psi_{j}(x_{1},\dots,x_{n},w_{n+1},\dots,w_{m}),
\end{array}
\]
where each $\psi_{j}$ is an atomic formula (see, for example, \cite[p.50]{Hod1997}). 
\end{definition}
One may build a \textit{theory} for a multi-sorted language $\mathfrak{L}$ by specficying a set of axioms. For our purposes these axioms are those charaterising objects and morphisms in a fixed category. We explain this idea by means of examples.
\begin{example}\label{onesorted}\cite[\S 6]{JenLen1989} Let $A$ be a unital ring. We recall how the language $\mathfrak{L}_{A}$ of $A$-modules may be considered as a predicate language in the sense of Definition \ref{sorts}. In this case there is only one sort, which we ignore, and which uniquely determines the function $\mathrm{sort}_{A}$. Let $\mathrm{pred}_{A}=\{0\}$. Let $\mathrm{func}_{A}=\{+\}\cup\{a\times-\mid a\in A\}$ where $+$ is binary and $a\times-$ is unary. 

In Definition \ref{sorts2-1} the notion of a \textit{structure} is recalled. For the language $\mathfrak{L}_{A}$ we have that this notion, together with certain axioms, recovers the defining properties of left $A$-modules. Later we consider homomorphisms. 
\end{example}
\begin{definition}\label{sorts2-1}
\cite[Definition 35]{DGN2016} Fix a set $\mathtt{S}\neq\emptyset$  and an $\mathtt{S}$-sorted predicate language $\mathfrak{L}$. An  $\mathfrak{L}$-\textit{structure} is a tuple \[\mathsf{M}=\langle \mathtt{S}(\mathsf{M}), (R(\mathsf{M})\mid R \in\mathrm{pred}_{\mathtt{S}}), (F(\mathsf{M})\mid F \in\mathrm{func}_{\mathtt{S}})\rangle\] such that: 
\begin{enumerate}
\item the symbol $\mathtt{S}(\mathsf{M})$ denotes a family of sets $\{s(\mathsf{M})\mid s\in\mathtt{S}\}$; 
\item if $\mathrm{sort}_{\mathtt{S}}(R)=(s_{1},\dots,s_{n})$ then $R(\mathsf{M})$ is a subset of $s_{1}(\mathsf{M})\times\dots\times s_{n}(\mathsf{M})$; and 
\item if $\mathrm{sort}_{\mathtt{S}}(F)=(s_{1},\dots,s_{n},s)$ then $F(\mathsf{M})$ is a map $s_{1}(\mathsf{M})\times\dots\times s_{n}(\mathsf{M})\to s(\mathsf{M})$.  
\end{enumerate}
Denoting the cardinality of any set $\mathtt{X}$ by $\vert \mathtt{X}\vert$, let $\vert\mathfrak{L}\vert=\vert \mathrm{pred}_{\mathtt{S}}\sqcup\mathrm{func}_{\mathtt{S}}\vert$ and, for $\mathsf{M}$ as above, let $\vert\mathsf{M}\vert$ be the sum  of the cardinalities $\vert s(\mathsf{M})\vert$ as $s$ runs through $\mathtt{S}$.
\end{definition}
The so-called \textit{one}-\textit{sorted} language from Example \ref{onesorted} is trivial in the sense that there is only one possibility for the sort function. In this way, Example \ref{twosorted} is a non-trivial example of the \textit{multi}-\textit{sorted} languages we recalled in Definition \ref{sorts}.
\begin{example}\label{twosorted}\cite[\S 9]{JenLen1989} Here we recall an example of an $\{\mathtt{r},\mathtt{m}\}$-sorted predicate language which is in contrast to Example \ref{onesorted}. The predicates in this language will be the unary symbols $0_{\mathtt{r}}$ and $1_{\mathtt{r}}$ of sort $\mathtt{r}$, and $0_{\mathtt{m}}$ of sort $\mathtt{m}$. The functions in this language will be the ternary symbols $+$ and $\times$ where $\mathrm{sort}_{\mathtt{r},\mathtt{m}}(+)=(\mathtt{m},\mathtt{m},\mathtt{m})$ and $\mathrm{sort}_{\mathtt{r},\mathtt{m}}(\times)=(\mathtt{r},\mathtt{m},\mathtt{m})$. After specifying the appropriate axioms, structures $_{\mathsf{A}}\mathsf{M}$ are tuples $(A,M)$ where $A$ is a unital ring and $M$ is a left $A$-module. 

In this way one interprets the symbols $0_{\mathtt{r}}$ and $1_{\mathtt{r}}$ as the additive and multiplicative identities in $A$. Similarly the symbol $0_{\mathtt{m}}$ is interpreted as the additive identity in $M$. 
In Definition \ref{sorts2} the notion of a \textit{homomorphism} between structures is recalled. In this sense, a homomorphism $(A,M)\to(B,N)$ is given by a pair $(f,l)$ where $f\colon A\to B$ is a homomorphism of rings and $l\colon M\to N$ is a homomorphism of left $A$-modules with the action of $A$ on $N$ given by $f$.
\end{example}
\begin{definition}\label{sorts2}
\cite[Definition 3]{DGN2016} Fix a non-empty set $\mathtt{S}$, an $\mathtt{S}$-sorted predicate language $\mathfrak{L}$ and  $\mathfrak{L}$-structures $\mathsf{L}$ and $\mathsf{M}$. By an $\mathfrak{L}$-\textit{homomorphism} $\mathsf{h}\colon\mathsf{L}\to\mathsf{M}$ we mean a family $\{\mathsf{h}_{s}\mid s\in\mathtt{S}\}$ of functions $\mathsf{h}_{s}\colon s(\mathsf{L})\to s(\mathsf{M})$ such that: 
\begin{enumerate}
\item if $\mathrm{sort}_{\mathtt{S}}(R)=(s_{1},\dots,s_{n})$ then $R(\mathsf{M})$ is the set of $(\mathsf{h}_{s_{1}}(a_{1}),\dots,\mathsf{h}_{s_{n}}(a_{n}))$ such that $(a_{1},\dots,a_{n})\in R(\mathsf{L})$;
\item and if $\mathrm{sort}_{\mathtt{S}}(F)=(s_{1},\dots,s_{n},s)$ then  for all $(a_{1},\dots,a_{n})\in s_{1}(\mathsf{L})\times \dots \times s_{n}(\mathsf{L})$ we have $\mathsf{h}_{s}(F(\mathsf{L})(a_{1},\dots,a_{n}))=F(\mathsf{M})(\mathsf{h}_{s_{1}}(a_{1}),\dots,\mathsf{h}_{s_{n}}(a_{n}))$.
\end{enumerate}
\end{definition}
Note that, in the notation of Definition \ref{sorts2}, \cite[Theorem 17]{DGN2016} says that a collection of functions $\mathsf{h}_{s}:s(\mathsf{L})\to s(\mathsf{M})$  defines an $\mathfrak{L}$-homomorphism if and only if, whenever $\varphi(x_{1},\dots,x_{n})$ is an atomic formula with $\mathrm{sort}_{\mathtt{S}}(x_{i})=s_{i}$ and $(a_{1},\dots,a_{n})$ lies in $s_{1}(\mathsf{L})\times \dots \times s_{n}(\mathsf{L})$, we have that 
\[
\mathsf{L}\models\varphi(a_{1},\dots,a_{n})\implies\mathsf{M}\models\varphi(\mathsf{h}_{s_{1}}(a_{1}),\dots,\mathsf{h}_{s_{n}}(a_{n})).
\]
We are now ready to recall the idea of purity coming from model theory.
\begin{definition}\label{sorts3}\cite[Definition 36]{DGN2016}
Fix a set $\mathtt{S}\neq\emptyset$  and an $\mathtt{S}$-sorted predicate language $\mathfrak{L}$. By an $\mathfrak{L}$-\textit{embedding} we mean an $\mathfrak{L}$-homomorphism $\mathsf{h}\colon\mathsf{L}\to\mathsf{M}$ such that:
\begin{enumerate}
\item if $\varphi(x_{1},\dots,x_{n})$ is an atomic formula with $\mathrm{sort}_{\mathtt{S}}(x_{i})=s_{i}$, then for all $(a_{1},\dots,a_{n})\in s_{1}(\mathsf{L})\times \dots \times s_{n}(\mathsf{L})$, $\mathsf{L}\models\varphi(a_{1},\dots,a_{n})$ if and only if $\mathsf{M}\models\varphi(\mathsf{h}_{s_{1}}(a_{1}),\dots,\mathsf{h}_{s_{n}}(a_{n}))$.
\end{enumerate}
By an $\mathfrak{L}$-\textit{pure embedding} we mean an $\mathfrak{L}$-homomorphism $\mathsf{h}\colon\mathsf{L}\to\mathsf{M}$ such that:
\begin{enumerate}\setcounter{enumi}{1}
\item if $\varphi(x_{1},\dots,x_{n})$ is a pp formula with $\mathrm{sort}_{\mathtt{S}}(x_{i})=s_{i}$, then for all $(a_{1},\dots,a_{n})\in s_{1}(\mathsf{L})\times \dots \times s_{n}(\mathsf{L})$, if $\mathsf{M}\models\varphi(\mathsf{h}_{s_{1}}(a_{1}),\dots,\mathsf{h}_{s_{n}}(a_{n}))$ then $\mathsf{L}\models\varphi(a_{1},\dots,a_{n})$.
\end{enumerate}
\end{definition}
Note that $\mathfrak{L}$-pure embeddings are $\mathfrak{L}$-embeddings. Note also that the statement of Definition \ref{sorts2}(2) is the contrapositive of the definition in \cite[p.50]{Hod1997}, so in this sense, over a ring $A$ and in the notation from Example \ref{onesorted}, an injective left $A$-module homomorphism is pure if and only if it is an $\mathfrak{L}_{A}$-pure embedding.
\begin{definition}\label{sorts4}
Fix a non-empty set $\mathtt{S}$, an $\mathtt{S}$-sorted predicate language $\mathfrak{L}$ and $\mathfrak{L}$-structures $\mathsf{L}$ and $\mathsf{M}$. We say $\mathsf{L}$ is an $\mathfrak{L}$-\textit{substructure} of $\mathsf{M}$ if $s(\mathsf{L})\subseteq s(\mathsf{M})$ for each $s\in\mathtt{S}$ and, labelling these inclusions $\mathsf{i}_{s}$, the family $\{\mathsf{i}_{s}\mid s\in\mathtt{S}\}$ defines an $\mathfrak{L}$-homomorphism $\mathsf{i}\colon\mathsf{L}\to\mathsf{M}$. If, additionally, $\mathsf{i}\colon\mathsf{L}\to\mathsf{M}$ is an $\mathfrak{L}$-pure embedding, we say $\mathsf{L}$ is an $\mathfrak{L}$-\textit{pure substructure} of  $\mathsf{M}$.
\end{definition}
\section{Purity in the canonical language of a triangulated category.}\label{2}
We now specify the setting of multi-sorted model theory outlined in \S\ref{1}. Throughout the sequel we consider a fixed \textit{compactly generated} triangulated category; see Assumption \ref{ass22}. Before recalling Definition \ref{compactob} we fix some notation.
\begin{notation}\label{categnotation}
Let $\mathcal{A}$ be an additive category. Denote the hom-sets $\mathcal{A}(X,Y)$ and the identity maps $1_{X}$. For any set $\mathtt{I}$ and any collection $\mathrm{B}=\{B_{i}\mid i\in \mathtt{I}\}$ of objects in $\mathcal{A}$, if the categorical product $\prod_{i}B_{i}$ exists in $\mathcal{A}$, we write $p_{j,\mathrm{B}}\colon\prod_{i}B_{i}\to B_{j}$ for the natural morphisms equipping it, in which case the universal property gives unique morphisms $v_{j,\mathrm{B}}\colon B_{j}\to \prod_{i}B_{i}$ such that $p_{j,\mathrm{B}}v_{j,\mathrm{B}}$ is the identity $1_{j}$ on $B_{j}$ for each $j$. Similarly $u_{j,\mathrm{B}}\colon B_{j}\to\coprod_{i}B_{i}$ will denote the morphisms equipping the coproduct $\coprod_{i}B_{i}$ if it exists, in which case there exist unique morphisms $q_{j,\mathrm{B}}\colon\coprod_{i}B_{i}\to B_{j}$ such that $q_{j,\mathrm{B}}u_{j,\mathrm{B}}=1_{j}$ for each $j$.

Fix an object $A$ in $\mathcal{A}$ and consider the covariant functor $\mathcal{A}(A,-)$. Note that both the product and coproduct of the collection $\mathcal{A}(A,\mathrm{B})=\{\mathcal{A}(A,B_{i})\mid i\in \mathtt{I}\}$ exist in the category $\mathbf{Ab}$ of abelian groups. We identify $\coprod_{i\in \mathtt{I}}\mathcal{A}(A,B_{i})$ with the subgroup of $\prod_{i\in \mathtt{I}}\mathcal{A}(A,B_{i})$ consisting of tuples $(g_{i}\mid i\in \mathtt{I})$ such that $g_{i}=0$ for all but finitely many $i\in \mathtt{I}$. 

Consequently, if $\prod_{i}B_{i}$ exists in $\mathcal{A}$ then map $\lambda_{A,\mathrm{B}}\colon\mathcal{A}(A,\prod_{i} B_{i})\to \prod_{i} \mathcal{A}(A,B_{i})$ from the universal property is given by $f\mapsto(p_{i,\mathrm{B}}f\mid i\in \mathtt{I})$ for each $f\in\mathcal{A}(A,\prod_{i} B_{i})$. Similarly if $\coprod_{i}B_{i}$ exists in $\mathcal{A}$ then map $\gamma _{A,\mathrm{B}}\colon\coprod _{i}\mathcal{A}(A, B_{i})\to \mathcal{A}(A,\coprod _{i}B_{i})$ from the universal property is given by $\gamma_{A,\mathrm{B}}(g_{i}\mid i\in \mathtt{I})=\sum_{i}u_{i,\mathrm{B}}g_{i}$. In general each of the morphisms $\lambda_{A,\mathrm{B}}$ are isomorphisms.

\end{notation}
\begin{definition}\label{compactob}\cite[Definition 1.1]{Nee1992} Let $\mathcal{T}$ be a triangulated category with suspension functor $\Sigma$, and assume all small coproducts in $\mathcal{T}$ exist. An object $X$ of $\mathcal{T}$ is said to be \textit{compact} if, for any set $\mathtt{I}$ and collection $\mathrm{M}=\{M_{i}\mid i\in \mathtt{I}\}$ of objects in $\mathcal{T}$ the morphism $\gamma_{X,\mathrm{M}}$ is an isomorphism. Let $\mathcal{T}^{c}$ be the full triangulated subcategory of $\mathcal{T}$ consisting of compact objects.

Given a set $\mathcal{G}$ of compact objects in $\mathcal{T}$ we say that $\mathcal{T}$ is \textit{compactly generated by} $\mathcal{G}$ if there  are no non-zero objects $M$ in $\mathcal{T}$ satisfying $\mathcal{T}(X,M)=0$ for all $X\in\mathcal{G}$ (or, said another way, any non-zero object $M$ gives rise to a non-zero morphism $X\to M$ for some $X\in\mathcal{G}$). If $\mathcal{T}$ is compactly generated by $\mathcal{G}$ we call $\mathcal{G}$ a \textit{generating set} provided $\Sigma X\in \mathcal{G}$ for all $X\in\mathcal{G}$. 
\end{definition}
\begin{assumption}\label{ass22}In the remainder of \S\ref{2} fix a triangulated category $\mathcal{T}$ with suspension functor $\Sigma$, and we assume:
\begin{itemize}
\item that  $\mathcal{T}$ has all small coproducts;
\item that $\mathcal{T}$ is compactly generated by a generating set $\mathcal{G}$; and
\item that the subcategory $\mathcal{T}^{c}$ of compact objects is skeletally small. 
\end{itemize}
\end{assumption}
Definition \ref{canonlang} and Remark \ref{canonlangrem} closley follow \cite[\S 3]{GarPre2005}, in which a multi-sorted language associated to the category $\mathcal{T}$ is introduced.

\begin{definition}\label{choiceofskel} In what follows let $\mathcal{S}$ denote a fixed set of objects in $\mathcal{T}^{c}$ given by choosing exactly one representative of each isomorphism class. Such a set $\mathcal{S}$ exists because we are assuming that $\mathcal{T}^{c}$ is skeletally small. 

\label{canonlang} \cite[\S 3]{GarPre2005} The \textit{canonical language} $\mathfrak{L}^{\mathcal{T}}$ of $\mathcal{T}$ is given by an $\mathcal{S}$-sorted predicate language $\langle\mathrm{pred}_{\mathcal{S}},\,\mathrm{func}_{\mathcal{S}},\,\mathrm{ar}_{\mathcal{S}},\,\mathrm{sort}_{\mathcal{S}}\rangle$, defined as follows. The set $\mathrm{pred}_{\mathcal{S}}$ consists of a symbol $\mathsf{0}_{G}$ with $\mathrm{sort}_{\mathcal{S}}(\mathsf{0}_{G})=G$ for each $G\in\mathcal{S}$. The set $\mathrm{func}_{\mathcal{S}}$ consists of: a ternary symbol $+_{G}$ with $\mathrm{sort}_{\mathcal{S}}(+_{G})=(G,G,G)$ for each $G\in\mathcal{S}$; and a unary operation $-\circ a$ with $\mathrm{sort}_{\mathcal{S}}(-\circ a)=(H,G)$ for each morphism $a:G\to H$ with $G,H\in\mathcal{S}$. Variables of sort $G\in\mathcal{S}$ will be denoted $v_{G}$.
\end{definition}
\begin{notation}
Suppose $\mathcal{A}$ is any additive category. We write $\mathcal{A}\text{-}\mathbf{Mod}$ (respectively $\mathbf{Mod}\text{-}\mathcal{A}$) for the category of additive covariant (respectively contravariant) functors $\mathcal{A}\to \mathbf{Ab}$ where $\mathbf{Ab}$ is the category of abelian groups.

For any object $M$ of $\mathcal{T}$ we let $\mathcal{T}(-,M)\vert$ denote the object of $\mathbf{Mod}\text{-}\mathcal{T}^{c}$ defined by restriction of $\mathcal{T}(-,M)$ to compact objects. We write \[
\mathbf{Y}\colon\mathbf{Mod}\text{-}\mathcal{T}^{c}\to\mathbf{Ab}\]
 to denote the \textit{restricted Yoneda functor}. That is, $\mathbf{Y}$ takes an object $M$ to $\mathbf{Y}(M)=\mathcal{T}(-,M)\vert$, and takes a morphism $h:L\to M$ to the natural transformation $\mathbf{Y}(h)\colon\mathcal{T}(-,L)\vert\to\mathcal{T}(-,M)\vert$ given by defining, for each compact object $X$, the map $\mathbf{Y}(h)_{X}\colon\mathcal{T}(X,L)\to\mathcal{T}(X,M)$ by $g\mapsto hg$.
\end{notation}
\begin{remark}\label{canonlangrem}\cite[\S 3]{GarPre2005} Consider the theory given from the set of axioms expressing the positive atomic diagram of the objects in $\mathcal{T}^{c}$ including the specification that all functions are additive. In this way, the category of models for the above theory is equivalent to the category $\mathbf{Mod}\text{-}\mathcal{T}^{c}$ where objects $M$ of $\mathcal{T}$ are regarded as structures $\mathsf{M}$ for this language. 

That is, in the notation of Definition \ref{sorts2-1}, we let $G(\mathsf{M})=\mathcal{T}(G,M)$, we interpret the predicate symbol $0_{G}$ as the identitly element of $G(\mathsf{M})$, we interpret $+_{G}$ as the additive group operation on $G(\mathsf{M})$, and we interpret $-\circ a$ as the map $G(\mathsf{M})\to H(\mathsf{M})$ given by $f\mapsto fa$. 
\end{remark}
\begin{lemma}\label{elempure}Let $L$ and $M$ be objects in $\mathcal{T}$ with corresponding $\mathfrak{L}^{\mathcal{T}}$-structures $\mathsf{L}$ and $\mathsf{M}$. Then the choice of $\mathcal{S}$ made in Definition \ref{choiceofskel} defines a bijection between morphisms $\mathbf{Y}(L)\to\mathbf{Y}(M)$ (that is, natural transformations) in $\mathbf{Mod}\text{-}\mathcal{T}^{c}$ and $\mathfrak{L}^{\mathcal{T}}$-homomorphisms $\mathsf{L}\to\mathsf{M}$.
\end{lemma}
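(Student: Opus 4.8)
The plan is to unwind both sides of the asserted bijection and to exhibit explicit, mutually inverse constructions. Spelling out the left-hand side, a morphism $\eta\colon\mathbf{Y}(L)\to\mathbf{Y}(M)$ in $\mathbf{Mod}\text{-}\mathcal{T}^{c}$ is a family of group homomorphisms $\eta_{X}\colon\mathcal{T}(X,L)\to\mathcal{T}(X,M)$, indexed by \emph{all} compact objects $X$, satisfying the naturality identity $\eta_{X}(fa)=\eta_{Y}(f)a$ for each morphism $a\colon X\to Y$ of $\mathcal{T}^{c}$ and each $f\in\mathcal{T}(Y,L)$. Spelling out the right-hand side, and using Remark \ref{canonlangrem} to view $\mathsf{L}$ and $\mathsf{M}$ as the structures with $G(\mathsf{L})=\mathcal{T}(G,L)$ and $G(\mathsf{M})=\mathcal{T}(G,M)$ for $G\in\mathcal{S}$, an $\mathfrak{L}^{\mathcal{T}}$-homomorphism $\mathsf{h}\colon\mathsf{L}\to\mathsf{M}$ is by Definition \ref{sorts2} exactly a family of group homomorphisms $\mathsf{h}_{G}\colon\mathcal{T}(G,L)\to\mathcal{T}(G,M)$, indexed by $G\in\mathcal{S}$ only, satisfying $\mathsf{h}_{G}(fa)=\mathsf{h}_{H}(f)a$ for each morphism $a\colon G\to H$ with $G,H\in\mathcal{S}$ and each $f\in\mathcal{T}(H,L)$; here additivity of $\mathsf{h}_{G}$ (and hence preservation of the interpretation of $\mathsf{0}_{G}$) comes from compatibility with $+_{G}$, and the displayed identity comes from compatibility with each symbol $-\circ a$. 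Thus the lemma amounts to saying that shrinking the index set from $\mathcal{T}^{c}$ to the skeleton $\mathcal{S}$ loses no information.

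First I would define the map from natural transformations to homomorphisms by restriction: send $\eta$ to the family $(\eta_{G})_{G\in\mathcal{S}}$. This is visibly an $\mathfrak{L}^{\mathcal{T}}$-homomorphism, its defining conditions being instances of those already satisfied by $\eta$. For the reverse map, I would fix once and for all, for each compact object $X$, its representative $G_{X}\in\mathcal{S}$ together with an isomorphism $\iota_{X}\colon X\to G_{X}$, normalised so that $G_{G}=G$ and $\iota_{G}=1_{G}$ for $G\in\mathcal{S}$. Given $\mathsf{h}$, set $\eta_{X}(f)=\mathsf{h}_{G_{X}}(f\iota_{X}^{-1})\,\iota_{X}$ for $f\in\mathcal{T}(X,L)$; each $\eta_{X}$ is a group homomorphism since $\mathsf{h}_{G_{X}}$ is and composing with the fixed morphisms $\iota_{X}^{\pm1}$ is additive.

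The key step is verifying naturality of this $\eta$. Given $a\colon X\to Y$ in $\mathcal{T}^{c}$, the morphism $c:=\iota_{Y}\,a\,\iota_{X}^{-1}\colon G_{X}\to G_{Y}$ lies between objects of $\mathcal{S}$, so $\mathsf{h}$ is compatible with $-\circ c$; rewriting $fa\iota_{X}^{-1}=(f\iota_{Y}^{-1})c$ for $f\in\mathcal{T}(Y,L)$ gives
\[\eta_{X}(fa)=\mathsf{h}_{G_{X}}\big((f\iota_{Y}^{-1})c\big)\iota_{X}=\mathsf{h}_{G_{Y}}(f\iota_{Y}^{-1})\,c\,\iota_{X}=\mathsf{h}_{G_{Y}}(f\iota_{Y}^{-1})\,\iota_{Y}\,a=\eta_{Y}(f)\,a,\]
as required. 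Finally I would check the two constructions are mutually inverse. Extending $\mathsf{h}$ and then restricting returns $\mathsf{h}$ because $\iota_{G}=1_{G}$ for $G\in\mathcal{S}$. Restricting $\eta$ and then re-extending returns $\eta$: applying naturality of $\eta$ to the morphism $\iota_{X}^{-1}\colon G_{X}\to X$ yields $\eta_{G_{X}}(f\iota_{X}^{-1})=\eta_{X}(f)\iota_{X}^{-1}$, whence $\mathsf{h}_{G_{X}}(f\iota_{X}^{-1})\iota_{X}=\eta_{X}(f)\iota_{X}^{-1}\iota_{X}=\eta_{X}(f)$. I expect the only real obstacle to be bookkeeping: organising the passage between families indexed by all of $\mathcal{T}^{c}$ and families indexed only by $\mathcal{S}$, where the chosen isomorphisms $\iota_{X}$ together with the compatibility of $\mathsf{h}$ with $-\circ a$ for \emph{all} morphisms (in particular all isomorphisms) among skeleton objects are precisely what make the extension well defined and the diagram chases go through.
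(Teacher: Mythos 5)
Your proposal is correct and follows essentially the same route as the paper: restriction to the skeleton $\mathcal{S}$ in one direction, and transport along chosen isomorphisms $X\simeq G_X$ (the paper's $\phi_X=\iota_X^{-1}$, normalised to the identity on $\mathcal{S}$) in the other, with naturality checked against the function symbols $-\circ a$. Your write-up is somewhat more explicit than the paper's in verifying naturality and that the two assignments are mutually inverse, but the construction is the same.
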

\begin{proof}Any object $X$ of $\mathcal{T}^{c}$ lies in the same isoclass as some unique $c(X)\in\mathcal{S}$, in which case we choose an isomorphism $\phi_{X}\colon c(X)\to X$. In this way, any object $N$ defines an isomorphism $-\circ\phi_{X,N}\colon\mathcal{T}(X,N)\to\mathcal{T}(c(X),N)$ by precomposition with $\phi_{X}$.  Recall that here the $\mathfrak{L}^{\mathcal{T}}$-structure $\mathsf{N}$ is defined by setting $G(\mathsf{N})=\mathcal{T}(G,N)$ for each sort $G\in\mathcal{S}$. In case $X\in\mathcal{S}$ we assume, without loss of generality, that $\phi_{X}=1_{X}$. 
Define the required bijection as follows. Fix an $\mathfrak{L}^{\mathcal{T}}$-homomorphism $\mathsf{h}\colon\mathsf{L}\to\mathsf{M}$. For any object $X$ of $\mathcal{T}^{c}$ define the function $\mathscr{H}(\mathsf{h})_{X}\colon\mathcal{T}(X,L)\to\mathcal{T}(X,M)$ by $l\mapsto (\mathsf{h}_{c(X)}(l\phi_{X}))\phi^{-1}_{X}$. Converlsey, fixing a natural transformation $\mathscr{H}\colon\mathbf{Y}(L)\to\mathbf{Y}(M)$, let $\mathsf{h}(\mathscr{H})_{G}=\mathscr{H}_{G}$ for each $G\in\mathcal{S}$. 

It suffices to explain why these assignments swap between morphisms in $\mathbf{Mod}\text{-}\mathcal{T}^{c}$ and $\mathfrak{L}^{\mathcal{T}}$-homomorphisms. To do so, we explain why the compatability conditions which define these morphisms are in correspondence. To this end, note firstly that the preservation of (the predicate symbol $0_{G}$ and the function symbols $+_{G}$) is equivalent to saying that each function $\mathscr{H}_{X}$ is a homomorphism of abelian groups. Letting $b\colon X\to Y$ be a morphism in $\mathcal{T}^{c}$ and $a=\phi_{Y}^{-1}b\phi_{X}$, for any object $N$ of $\mathcal{T}$ the sorted function symbol $-\circ a$ is interpreted in $\mathsf{N}$ by the equation $(-\circ a)(\mathsf{N})=-\circ(\phi_{N,Y}^{-1})b\phi_{N,X}$. Thus, by construction, saying that the function symbols $-\circ a$ are preserved is equivalent to saying that the collection of $\mathscr{H}_{X}$ (for $X$ compact) defines a natural transformation.
\end{proof}
In what follows we discuss the notion of purity in the context of compactly generated triangulated categories. 
\begin{definition}\cite[Definition 1.1]{Kra2002} We say $h\colon L\to M$ in $\mathcal{T}$ is a \textit{pure monomorphism} if $\mathbf{Y}(h)_{X}\colon\mathcal{T}(X,L)\to\mathcal{T}(X,M)$ is injective for each compact object $X$.
\end{definition}
Now we may begin to build results which mimic well-known ideas from the model theory of modules. To consistently compare and contrast our work with the module-theoretic setting, we use a book of Jensen and Lenzing \cite{JenLen1989}. In this spirit, Lemma \ref{elempure2} is analogous to \cite[Theorem 6.4(i,ii)]{JenLen1989}, and  Lemma \ref{ppprops} is analogous to \cite[Proposition 6.6]{JenLen1989}. Similar analogies are found throughout the sequel.
\begin{lemma}\label{elempure2}A morphism $h\colon L\to M$ is a pure monomorphism if and only if the image $\mathsf{h}\colon \mathsf{L}\to\mathsf{M}$ under the bijection in Lemma \ref{elempure} is an $\mathfrak{L}^{\mathcal{T}}$-pure embedding.
 \end{lemma}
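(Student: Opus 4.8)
The plan is to unwind both sides of the claimed equivalence into conditions about the component maps $\mathscr{H}_X$ of the natural transformation $\mathbf{Y}(h)$, and then match them up using the pp-formula/coherent-functor dictionary implicit in the canonical language. First I would recall that $h$ is a pure monomorphism precisely when each $\mathbf{Y}(h)_X\colon\mathcal{T}(X,L)\to\mathcal{T}(X,M)$ is injective, and that under the bijection of Lemma \ref{elempure} this map is (up to the fixed isomorphisms $\phi_X$) the component $\mathsf{h}_{c(X)}$ of the $\mathfrak{L}^{\mathcal{T}}$-homomorphism $\mathsf{h}$. The two conditions to reconcile are therefore: (injectivity of every $\mathsf{h}_G$) versus (the pp-reflection property from Definition \ref{sorts3}(2)). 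The forward direction of the latter giving the former is cheap, because $x =_G 0_G$ (equivalently, the atomic formula $v_G +_G v_G' =_G v_G'$ read as ``$v_G$ is the identity'') is itself a pp formula, so if $\mathsf{h}$ reflects pp formulas it reflects being zero, and an additive map reflecting $0$ is injective. So the real content is the converse: injectivity of all the $\mathsf{h}_G$ forces $\mathsf{h}$ to reflect arbitrary pp formulas.

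For that converse I would argue as follows. A pp formula $\varphi(v_{G_1},\dots,v_{G_n})$ in $\mathfrak{L}^{\mathcal{T}}$ is $\exists\, w_{G_{n+1}},\dots,w_{G_m}\colon\bigwedge_j\psi_j$ with each $\psi_j$ atomic; after using that all functions are additive and that the only relation symbol is $0_G$, each $\psi_j$ is (equivalent to) a linear equation of the form $\sum_i v_{G_i}\circ a_{ji} + \sum_k w_{G_k}\circ b_{jk} =_{H_j} 0_{H_j}$ for suitable compact maps $a_{ji},b_{jk}$. Thus $\mathsf{M}\models\varphi(\mathsf{h}(a_1),\dots,\mathsf{h}(a_n))$ says there exist morphisms $y_k\colon G_k\to M$ making all these equations hold in $\mathcal{T}$. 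The solution set of the conjunction, as $(v_{G_i})$ ranges, is exactly the image of a map of the form $\mathcal{T}(W,M)\to\prod_i\mathcal{T}(G_i,M)$ induced by a single morphism $\bigoplus_i G_i\to W$ in $\mathcal{T}^c$ (a ``coherent functor'' presentation, as in \cite{GarPre2005}); the point is that this whole description is natural in $M$, i.e.\ it is the value at $M$ of a functor built from representables, compatible with $\mathbf{Y}(h)$. I would then invoke that $\mathbf{Y}$ restricted to compact objects is full and faithful (it is, essentially, Yoneda), so that the map ``solve $\varphi$'' is computed inside $\mathbf{Mod}\text{-}\mathcal{T}^c$ by applying a representable-valued functor, and $\mathbf{Y}(h)$ being a monomorphism there (componentwise injective) means precisely that it is monic, hence reflects any such image-membership that is detected by a monomorphism.

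Concretely, the key step is: given a solution $(y_k)$ in $\mathsf{M}$ of the witnessing equations with parameters $\mathsf{h}(a_i)$, I must produce a solution $(z_k)$ in $\mathsf{L}$ with parameters $a_i$. Because $h$ is a pure monomorphism, each $\mathbf{Y}(h)_{G_k}$ is injective; but injectivity alone is not enough — I genuinely need that the pp-definable subgroup (the image of $\mathcal{T}(W,-)$ composed with the projection) is \emph{reflected}, not merely that $0$ is reflected. Here I would use that a pure monomorphism in Krause's sense is exactly a morphism whose image under $\mathbf{Y}$ is a monomorphism in the functor category $\mathbf{Mod}\text{-}\mathcal{T}^c$ (\cite[Lemma 1.2 or the surrounding discussion]{Kra2002}), and that monomorphisms in a functor category to $\mathbf{Ab}$ are componentwise injective and, crucially, stable under the pullback computing the solution set. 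So the existence of $(y_k)$ over $M$ gives a pullback diagram whose $L$-component exists because the relevant square is a pullback and $\mathbf{Y}(h)$ is monic: chasing the diagram yields the $z_k$. I anticipate the main obstacle is precisely making this diagram chase rigorous at the level of the functor category — i.e.\ identifying the solution set of a pp formula with the value at $M$ of a subfunctor of a finite product of representables in a way that is visibly functorial in $M$ and that interacts correctly with $\mathbf{Y}(h)$ — rather than anything about the model theory, which by then is bookkeeping. Once that identification is in place, the equivalence drops out: reflecting pp formulas $\Leftrightarrow$ $\mathbf{Y}(h)$ detects these subfunctors $\Leftrightarrow$ $\mathbf{Y}(h)$ monic $\Leftrightarrow$ $h$ pure.
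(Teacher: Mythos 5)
Your setup and the easy direction are fine: reflecting the atomic formula asserting that $v_G$ is zero forces each component $\mathsf{h}_G$ to be injective, which via the identifications of Lemma \ref{elempure} is exactly Krause's definition of a pure monomorphism. The genuine gap is in the converse, at precisely the point you flag and then wave away. You assert that, because $\mathbf{Y}(h)$ is a componentwise injective morphism in $\mathbf{Mod}\text{-}\mathcal{T}^{c}$, ``the relevant square is a pullback,'' i.e.\ that the solution set $\varphi(L)$ is the preimage of $\varphi(M)$ under $\mathbf{Y}(h)$. But that statement \emph{is} the content of the lemma (it reappears as Lemma \ref{ppprops}), and it is false for a general monomorphism between general objects of an additive functor category: the solution set of a pp formula is an \emph{image} of an induced map, and images, unlike kernels, are not stable under pullback along monomorphisms --- this is exactly the difference between a monomorphism and a pure monomorphism of modules. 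Since nothing in your argument uses any feature of $\mathcal{T}$ beyond additivity and Yoneda, it cannot be complete as it stands.

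The missing ingredient is the triangulated structure (equivalently, the fp-injectivity of restricted Yoneda images, \cite[Lemma 1.6]{Kra2000}). The paper first invokes \cite[Proposition 3.1]{GarPre2005} to replace an arbitrary pp formula in one free variable of sort $G$ by a divisibility formula $\exists u_{H}\colon v_{G}=u_{H}a$ for a single morphism $a\colon G\to H$ in $\mathcal{T}^{c}$; completing $a$ to a triangle $W\xrightarrow{w}G\xrightarrow{a}H$ and using that $\mathcal{T}(-,N)$ is cohomological gives $Na=\mathrm{im}(-\circ a)=\mathrm{ker}(-\circ w)$ for every object $N$. Membership in a pp-definable subgroup is thereby converted from an image condition (not reflected by componentwise injectivity) into a kernel condition (which is reflected): $hg\in Ma$ iff $(hg)w=h(gw)=0$ iff $gw=0$ iff $g\in La$. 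Your proposal should either carry out this pseudokernel step explicitly or route the argument through the fp-injectivity of $\mathbf{Y}(L)$; without one of these inputs the diagram chase you anticipate cannot be completed, and the model-theoretic bookkeeping is not where the difficulty lies.
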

 \begin{proof}By \cite[Proposition 3.1]{GarPre2005} any pp-formula $\varphi(v_{G})$ is equivalent to a divisibility formula $\exists u_{H} \colon v_{G}=u_{H}a$ where $a\colon G\to H$ is morphism and $G,H\in\mathcal{S}$. By Definition \ref{sorts3}, $\mathsf{h}$ is an $\mathfrak{L}^\mathcal{T}$-pure embedding if and only if, for any morphism $a\colon G\to H$ with $G,H\in\mathcal{S}$ and any pair $(f,g)\in G(\mathsf{L})\times H(\mathsf{L})$, if $hg=h fa$ then $g=fa$. Since any compact object is isomorphic to an object in $\mathcal{S}$, this is equivalent to the condition which says that, for each compact object $X$, the morphism $\mathcal{T}(X,L)\to\mathcal{T}(X,M)$ given by $g\mapsto h g$ is injective. 
\end{proof} 
\section{Products, coproducts, coherent functors and pp-formulas.}\label{3}
Recall, from Definition \ref{sorts}, that pp-formulas in $\mathfrak{L}^{\mathcal{T}}$ are those lying in the closure of the set of equations under conjunction and existential quantification.
\begin{definition}\cite[\S 2]{GarPre2005} Given $G\in\mathcal{S}$ and an object $M$ of $\mathcal{T}$ with $\mathfrak{L}^{\mathcal{T}}$-structure $\mathsf{M}$, a \textit{pp}-\textit{definable subgroup of} $M$ \textit{of sort} $G$ is the set $\varphi(M)=\{f\in G(\mathsf{M})\mid \mathsf{M}\models\varphi(f)\}$ of solutions (in $\mathsf{M}$) to some pp-formula $\varphi(v_{G})$ in one free variable of sort $G\in\mathcal{S}$. 

For any morphism $b\colon X\to Y$ in $\mathcal{T}^{c}$ and any object $M$ in $\mathcal{T}$ recall the map $\mathbf{Y}(b)\colon \mathcal{T}(Y, M)\to \mathcal{T}(X, M)$ is defined by precomposition. In this case let
\[
Mb=\mathrm{im}(\mathbf{Y}(b))=\{fb\in \mathcal{T}(X, M)\mid f\in\mathcal{T}(Y, M)\}.
\]
If $G,H\in\mathcal{S}$ and $\phi_{X}\colon G\to X$ and $\phi_{Y}\colon H\to Y$ are isomorphisms in $\mathcal{T}$ (as in the proof of Lemma \ref{elempure}), then $fb\mapsto f\phi^{-1}_{Y}b\phi_{X}$ defines a isomorphism $Mb\to\varphi(M)$ in $\mathbf{Ab}$ where $\varphi(v_{G})$ is the pp-formula $(\exists u_{H} \colon v_{G}=u_{H}a)$ where $a=\phi^{-1}_{Y}b\phi_{X}$. 
\end{definition}
\begin{lemma}\label{ppprops}Let $G\in\mathcal{S}$ and let $\varphi(v_{G})$ be a pp-formula in one free variable of sort $G$. If $h\colon L\to M$ is a pure monomorphism then $\varphi(L)=\{g\in\mathcal{T}(G,L)\mid hg\in\varphi(M)\}$. 
\end{lemma}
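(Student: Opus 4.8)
The plan is to read both inclusions off the model-theoretic reformulation of purity already established. By Lemma~\ref{elempure2}, the $\mathfrak{L}^{\mathcal{T}}$-homomorphism $\mathsf{h}\colon\mathsf{L}\to\mathsf{M}$ attached to $h$ via Lemma~\ref{elempure} is an $\mathfrak{L}^{\mathcal{T}}$-pure embedding. Since $G\in\mathcal{S}$, unravelling the construction in Lemma~\ref{elempure} (using the normalisation $\phi_{X}=1_{X}$ for $X\in\mathcal{S}$) shows that $\mathsf{h}_{G}\colon G(\mathsf{L})=\mathcal{T}(G,L)\to\mathcal{T}(G,M)=G(\mathsf{M})$ is exactly $\mathbf{Y}(h)_{G}$, i.e.\ the map $g\mapsto hg$. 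Hence the asserted equality is precisely the statement that, for $g\in\mathcal{T}(G,L)$, one has $\mathsf{L}\models\varphi(g)$ if and only if $\mathsf{M}\models\varphi(\mathsf{h}_{G}(g))$.

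For the forward implication I would use that $\mathfrak{L}^{\mathcal{T}}$-homomorphisms preserve the truth of pp-formulas. This is a routine induction on the structure of a pp-formula: the remark following Definition~\ref{sorts2} (quoting \cite[Theorem 17]{DGN2016}) gives preservation of atomic formulas, preservation of conjunctions is immediate, and if $\mathsf{L}\models(\exists w\colon\psi(g,w))$ then a witness $w_{0}$ for $\mathsf{L}$ maps under $\mathsf{h}$ to a witness for $\mathsf{M}\models(\exists w\colon\psi(\mathsf{h}_{G}(g),w))$. Applying this to $\varphi$ yields $\varphi(L)\subseteq\{g\in\mathcal{T}(G,L)\mid hg\in\varphi(M)\}$. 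For the reverse inclusion I would simply invoke Definition~\ref{sorts3}(2): a pure embedding reflects pp-formulas, so $\mathsf{M}\models\varphi(\mathsf{h}_{G}(g))$ forces $\mathsf{L}\models\varphi(g)$, that is, $\{g\in\mathcal{T}(G,L)\mid hg\in\varphi(M)\}\subseteq\varphi(L)$. Combining the two inclusions proves the claim.

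I do not expect a genuine obstacle here; the only point requiring care is the identification of $\mathsf{h}_{G}$ with postcomposition by $h$, which is where the choice made in Lemma~\ref{elempure} is used. Alternatively, one can bypass model theory: by \cite[Proposition 3.1]{GarPre2005} the formula $\varphi(v_{G})$ is equivalent to a divisibility formula $\exists u_{H}\colon v_{G}=u_{H}a$ for some morphism $a\colon G\to H$ in $\mathcal{T}^{c}$, so that $\varphi(N)$ is identified with $Na=\mathrm{im}(\mathbf{Y}(a)\colon\mathcal{T}(H,N)\to\mathcal{T}(G,N))$ for any object $N$. The inclusion $\varphi(L)\subseteq\{g\mid hg\in\varphi(M)\}$ is then naturality of $\mathbf{Y}(-)$ in the morphism $a$ (namely $h(g'a)=(hg')a$), while the reverse inclusion $\{g\mid hg\in\varphi(M)\}\subseteq\varphi(L)$ says that $hg=f'a$ for some $f'$ implies $g=g'a$ for some $g'$, which is again exactly the statement that $h$ is a pure monomorphism read through Lemma~\ref{elempure2}. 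I would present the short model-theoretic argument as the main proof and mention this second route only in passing.
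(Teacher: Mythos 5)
Your argument is correct and follows essentially the same route as the paper: both deduce from Lemma~\ref{elempure2} that $\mathsf{h}$ is an $\mathfrak{L}^{\mathcal{T}}$-pure embedding, obtain the reverse inclusion from Definition~\ref{sorts3}(2), and use preservation of pp-formulas by homomorphisms for the forward inclusion (which the paper leaves implicit but you rightly justify by induction from the remark after Definition~\ref{sorts2}). Your secondary route via divisibility formulas is a fine alternative but not needed; the main proof matches the paper's.
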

\begin{proof}
The claim follows from Lemma \ref{elempure2}, together with Definition \ref{sorts3}, which we recall defines $\mathfrak{L}^{\mathcal{T}}$-pure embeddings as those which preserve solutions to the negations of all pp-formulas.
\end{proof}
We continue, slightly abusing terminology, by reffering to any set of the form $Mb$ (for some $b\in\mathcal{T}(X,Y)$) as a \textit{pp}-\textit{definable subgroup of} $M$ \textit{of sort} $X$. Lemma \ref{endodefinable} is an analgoue of the corresponding result for module categories; see for example \cite[Corollary 2.2(i)]{Pre1988}. Note that we state it here for convenience, but that it is only used in \S\ref{applicationstoendoperfect}, and not in the proof of Theorem \ref{characterisation}. 
\begin{lemma}\label{endodefinable}
For any object $M$ in $\mathcal{T}$ and any morphism $b\in\mathcal{T}(X,Y)$ the pp-definable subgroup $Mb$ is a left $\mathrm{End}_{\mathcal{T}}(M)$-submodule of $\mathcal{T}(X, M)$. 
\end{lemma}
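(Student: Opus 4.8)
The plan is simply to unwind the definitions, since the statement is of a routine nature. Recall that $\mathrm{End}_{\mathcal{T}}(M)=\mathcal{T}(M,M)$ is a ring under composition, and that $\mathcal{T}(X,M)$ carries a left $\mathrm{End}_{\mathcal{T}}(M)$-module structure given by post-composition: for $\alpha\in\mathrm{End}_{\mathcal{T}}(M)$ and $g\in\mathcal{T}(X,M)$ one sets $\alpha\cdot g=\alpha g$. Associativity of composition in $\mathcal{T}$ ensures $(\alpha\beta)\cdot g=\alpha\cdot(\beta\cdot g)$, and biadditivity of composition ensures each $\alpha\cdot(-)$ is an endomorphism of the abelian group $\mathcal{T}(X,M)$, so this is indeed a module structure.

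First I would observe that $Mb$ is a subgroup of $\mathcal{T}(X,M)$. By definition $Mb=\mathrm{im}(\mathbf{Y}(b))$, where $\mathbf{Y}(b)\colon\mathcal{T}(Y,M)\to\mathcal{T}(X,M)$ is precomposition with $b$; this map is a homomorphism of abelian groups, again by biadditivity of composition, and the image of a group homomorphism is a subgroup.

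Then I would verify closure of $Mb$ under the $\mathrm{End}_{\mathcal{T}}(M)$-action. A general element of $Mb$ has the form $fb$ with $f\in\mathcal{T}(Y,M)$. Given $\alpha\in\mathrm{End}_{\mathcal{T}}(M)$, associativity of composition gives $\alpha\cdot(fb)=\alpha(fb)=(\alpha f)b$, and $\alpha f\in\mathcal{T}(Y,M)$, so $\alpha\cdot(fb)\in Mb$. Combined with the previous step, this shows $Mb$ is a left $\mathrm{End}_{\mathcal{T}}(M)$-submodule of $\mathcal{T}(X,M)$.

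There is no substantive obstacle here: everything reduces to associativity and biadditivity of composition in the additive category $\mathcal{T}$. The only point that requires a little care is the bookkeeping of conventions --- one must fix that $\mathrm{End}_{\mathcal{T}}(M)$ acts on $\mathcal{T}(X,M)$ on the left by post-composition (so that the ring multiplication matches the action), after which closure under the action is immediate.
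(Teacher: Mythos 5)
Your proposal is correct and follows the same approach as the paper: the paper's proof is a one-line appeal to associativity of composition to show $Mb$ is closed under postcomposition with endomorphisms of $M$, and you simply spell out the same argument, adding the routine verification that $Mb$ is a subgroup as the image of a group homomorphism.
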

\begin{proof}By the associativity of the composition of morphisms in $\mathcal{T}$ the set $Mb$ is closed under postcomposition with endomorphisms of $M$.
\end{proof}
Recall that a covariant functor $\mathscr{F}:\mathcal{T}\to\mathbf{Ab}$ is \textit{coherent} if there is an exact sequence in $\mathcal{T}\text{-}\mathbf{Mod}$ of the form $\mathcal{T}(A,-)\to\mathcal{T}(B,-)\to\mathscr{F}\to0$. If $t\colon M\to N$ and $a:G\to H$ are morphisms in $\mathcal{T}$ with $G,H\in\mathcal{S}$, then $tv\in Na$ for any $v\in Ma$. So, for the pp-formula $\varphi(v_{G})=(\exists u_{H} \colon v_{G}=u_{H}a)$ in $\mathfrak{L}^{\mathcal{T}}$ the assignment of objects $M\mapsto \varphi(M)$ defines a functor. 

Furthermore, by \cite[Lemma 4.3]{GarPre2005} these functors are coherent, and any such coherent functor arises this way. We now recall that the categories we are considering have all small products.
\begin{remark}\label{brownproducts}
As a result of Assumption \ref{ass22}, by the \textit{Brown representability theorem} we have that $\mathcal{T}$ has all small products. See \cite[Lemma 1.5]{Kra2000} for details.
\end{remark}
Lemma \ref{ppprops2} is analogous to \cite[Proposition 6.7(i,ii)]{JenLen1989}. Recall Notation \ref{categnotation}.
\begin{lemma}\label{ppprops2}Let $G\in\mathcal{S}$ and let $\varphi(v_{G})$ be a pp-formula in one free variable of sort $G$. For any set $I$ and any collection $\mathrm{M}=\{M_{i}\mid i\in \mathtt{I}\}$ of objects in $\mathcal{T}$ the restrictions of $\gamma_{G,\mathrm{M}}$ and $\lambda_{G,\mathrm{M}}$ define isomorphisms of abelian groups
\[
\begin{array}{cc}
\coprod_{i}\varphi(M_{i})\to \varphi(\coprod_{i}M_{i}), & \varphi(\prod_{i}M_{i})\to \prod_{i}\varphi(M_{i}).
\end{array}\]
\end{lemma}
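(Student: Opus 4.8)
The plan is to reduce to the divisibility case and then transport everything through the naturality of precomposition maps, exploiting compactness of the sorts. First, by \cite[Proposition 3.1]{GarPre2005} --- already used in the proof of Lemma~\ref{elempure2} --- the pp-formula $\varphi(v_{G})$ is equivalent over $\mathfrak{L}^{\mathcal{T}}$ to a divisibility formula $\exists\,u_{H}\colon v_{G}=u_{H}a$ for some morphism $a\colon G\to H$ with $G,H\in\mathcal{S}$. Since $G(\mathsf{M})=\mathcal{T}(G,M)$, $H(\mathsf{M})=\mathcal{T}(H,M)$ and the symbol $-\circ a$ is interpreted as precomposition, this gives $\varphi(M)=Ma=\mathrm{im}(\mathbf{Y}(a))$ for every object $M$ of $\mathcal{T}$, where $\mathbf{Y}(a)\colon\mathcal{T}(H,M)\to\mathcal{T}(G,M)$ is the precomposition map. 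So it suffices to show that $\gamma_{G,\mathrm{M}}$ restricts to an isomorphism $\coprod_{i}(M_{i}a)\to(\coprod_{i}M_{i})a$, and that $\lambda_{G,\mathrm{M}}$ restricts to an isomorphism $(\prod_{i}M_{i})a\to\prod_{i}(M_{i}a)$.

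Next I would record the two ingredients. Because $G,H\in\mathcal{S}\subseteq\mathcal{T}^{c}$ are compact, Definition~\ref{compactob} gives that $\gamma_{G,\mathrm{M}}$ and $\gamma_{H,\mathrm{M}}$ are isomorphisms, while $\lambda_{G,\mathrm{M}}$ and $\lambda_{H,\mathrm{M}}$ are isomorphisms unconditionally (Notation~\ref{categnotation}). Precomposition with $a$ is natural in the target variable, so from the explicit formulas $\gamma_{X,\mathrm{M}}(g_{i}\mid i)=\sum_{i}u_{i,\mathrm{M}}g_{i}$ and $\lambda_{X,\mathrm{M}}(f)=(p_{i,\mathrm{M}}f\mid i)$ of Notation~\ref{categnotation} one checks at once, using $(\sum_{i}u_{i,\mathrm{M}}g_{i})a=\sum_{i}u_{i,\mathrm{M}}(g_{i}a)$ and $p_{i,\mathrm{M}}(fa)=(p_{i,\mathrm{M}}f)a$, the commutativity of the squares
\[
\begin{array}{ccc}
\coprod_{i}\mathcal{T}(H,M_{i}) & \longrightarrow & \mathcal{T}(H,\coprod_{i}M_{i})\\
\downarrow & & \downarrow\\
\coprod_{i}\mathcal{T}(G,M_{i}) & \longrightarrow & \mathcal{T}(G,\coprod_{i}M_{i})
\end{array}
\qquad\qquad
\begin{array}{ccc}
\mathcal{T}(H,\prod_{i}M_{i}) & \longrightarrow & \prod_{i}\mathcal{T}(H,M_{i})\\
\downarrow & & \downarrow\\
\mathcal{T}(G,\prod_{i}M_{i}) & \longrightarrow & \prod_{i}\mathcal{T}(G,M_{i})
\end{array}
\]
in which the top and bottom horizontal maps are $\gamma_{H,\mathrm{M}},\gamma_{G,\mathrm{M}}$ (respectively $\lambda_{H,\mathrm{M}},\lambda_{G,\mathrm{M}}$) and the vertical maps are the precomposition maps $\coprod_{i}\mathbf{Y}(a)$ and $\mathbf{Y}(a)$ on the left square, and $\mathbf{Y}(a)$ and $\prod_{i}\mathbf{Y}(a)$ on the right square.

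Finally, in a commutative square whose two horizontal maps are isomorphisms, the bottom horizontal isomorphism carries the image of the left vertical map bijectively onto the image of the right vertical map. In the coproduct square the left vertical map $\coprod_{i}\mathbf{Y}(a)$ has image $\coprod_{i}(M_{i}a)$ and the right vertical map $\mathbf{Y}(a)$ has image $(\coprod_{i}M_{i})a$; in the product square the left vertical map $\mathbf{Y}(a)$ has image $(\prod_{i}M_{i})a$ and the right vertical map $\prod_{i}\mathbf{Y}(a)$ has image $\prod_{i}(M_{i}a)$. Here I use the elementary facts that $\mathrm{im}(\coprod_{i}\alpha_{i})=\coprod_{i}\mathrm{im}(\alpha_{i})$ and $\mathrm{im}(\prod_{i}\alpha_{i})=\prod_{i}\mathrm{im}(\alpha_{i})$ for families of morphisms of abelian groups. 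Combining this with the reduction of the first paragraph yields the two claimed isomorphisms. There is no substantial obstacle; the only points needing care are that one must use the compactness of $H$ (not merely of $G$) so that $\gamma_{H,\mathrm{M}}$ is an isomorphism --- which is precisely what makes $\gamma_{G,\mathrm{M}}$ land \emph{onto} $(\coprod_{i}M_{i})a$ --- and the routine identification of the images of the vertical maps above.
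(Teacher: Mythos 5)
Your proof is correct, but it takes a more elementary route than the paper. The paper's own argument is short and leverages general machinery: it notes that $\varphi$ is a coherent functor by \cite[Lemma 4.3]{GarPre2005}, invokes the equivalence of statements (1) and (3) of \cite[Theorem A]{Kra2002} (coherent functors on $\mathcal{T}$ preserve small products and coproducts) to conclude that the canonical maps $\delta\colon\coprod_{i}\varphi(M_{i})\to\varphi(\coprod_{i}M_{i})$ and $\mu\colon\varphi(\prod_{i}M_{i})\to\prod_{i}\varphi(M_{i})$ are isomorphisms, and then identifies $\delta$ and $\mu$ with the restrictions of $\gamma_{G,\mathrm{M}}$ and $\lambda_{G,\mathrm{M}}$. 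You instead unwind everything: reduce via \cite[Proposition 3.1]{GarPre2005} to the divisibility form $\varphi(M)=Ma$, set up the two naturality squares for precomposition with $a$, and transport images along the horizontal isomorphisms, using $\mathrm{im}(\coprod_{i}\alpha_{i})=\coprod_{i}\mathrm{im}(\alpha_{i})$ and $\mathrm{im}(\prod_{i}\alpha_{i})=\prod_{i}\mathrm{im}(\alpha_{i})$ in $\mathbf{Ab}$. All the steps check out, including the observation that surjectivity of the restricted $\gamma_{G,\mathrm{M}}$ onto $(\coprod_{i}M_{i})a$ genuinely requires the compactness of $H$ and not just of $G$ --- a point the paper's citation-based proof hides inside Krause's theorem. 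What your approach buys is self-containedness and an explicit description of where each hypothesis is used; what the paper's approach buys is brevity and the conceptual framing of pp-formulas as coherent functors, which it reuses elsewhere. One cosmetic remark: your reduction to a single divisibility formula is exactly the identification the paper makes in the paragraph preceding Remark \ref{brownproducts}, so the two proofs agree on the starting point and diverge only in how they establish that the canonical comparison maps are bijective.
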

\begin{proof}By the existence of small products and coproducts in $\mathcal{T}$ and the functorality of $\varphi$, the universal properties give morphisms $\delta\colon\coprod_{i}\varphi(M_{i})\to \varphi(\coprod_{i}M_{i})$ and $\mu\colon \varphi(\prod_{i}M_{i})\to \prod_{i}\varphi(M_{i})$.
By \cite[Lemma 4.3]{GarPre2005} the functor $\varphi$ is coherent, so by the equivalence of statements (1) and (3) from \cite[Theorem A]{Kra2002} the morphisms $\delta$ and $\mu$ are isomorphisms. It is straightforward to check that $\delta$ and $\mu$ are the respectively restrictions of $\gamma_{G,\mathrm{M}}$ and $\lambda_{G,\mathrm{M}}$ from Notation \ref{categnotation}.
\end{proof}
We now adapt some technical results from work of Huisgen-Zimmerman \cite{Hui1979}, in which a (now well-known) charaterisation of $\Sigma$-pure-injective modules was given. Our adaptations, namely Lemmas \ref{zimmtech2} and \ref{zimmtech3}, are used in the sequel.
\begin{notation}\label{techynotation}Fix collections $\mathrm{M}=\{M_{i}\mid i\in \mathbb{N}\}$ and $\mathrm{L}=\{L_{j}\mid j\in \mathtt{J}\}$ of objects in $\mathcal{T}$ and let $M=L$ where $M=\prod_{i}M_{i}$ and $L=\coprod_{j}L_{j}$ exist.  By Notation \ref{categnotation},  $p_{i,\mathrm{M}}\colon M\to M_{i}$ and $u_{j,\mathrm{L}}\colon L_{j}\to L$ denote the morphisms equipping the product and coproduct, and $v_{i,\mathrm{M}}\colon M_{i}\to M$ and  $q_{j,\mathrm{L}}\colon L\to L_{j}$ denote the morphisms given by universal properties such that $p_{i,\mathrm{M}}v_{i,\mathrm{M}}=1_{M_{i}}$ and $q_{j,\mathrm{L}}u_{j,\mathrm{L}}=1_{L_{j}}$ for each $i$ and $j$.
\end{notation}
\begin{corollary}\label{zimmtech}Consider Notation \ref{techynotation}, let $a\colon G\to H$ be a morphism with $G,H\in\mathcal{S}$, and let $\varphi(v_{G})=(\exists u_{H}\colon v_{G}=u_{H}a)$. Then there is an isomorphism
\[
\begin{array}{c}
\kappa\langle\varphi\rangle\colon\prod_{i\in\mathbb{N}}\varphi(M_{i})\to\coprod_{j\in \mathtt{J}}\varphi(L_{j}),\,(f_{i}\mid i\in\mathbb{N})\mapsto(q_{j,\mathrm{L}}f\mid j\in \mathtt{J})
\end{array}
\]
where $f\colon G\to\prod_{i}M_{i}$ is given by the universal property, and whose inverse is 
\[
\begin{array}{c}
\kappa^{-1}\langle\varphi\rangle\colon \coprod_{j\in \mathtt{J}}\varphi(L_{j})\to\prod_{i\in\mathbb{N}}\varphi(M_{i}),\,(g_{j}\mid j\in \mathtt{J})\mapsto(\sum_{j\in \mathtt{J}}p_{i,\mathrm{M}}u_{j,\mathrm{L}}g_{j}\mid i\in \mathbb{N}).
\end{array}
\]
\end{corollary}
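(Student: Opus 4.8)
The plan is to realise $\kappa\langle\varphi\rangle$ as a composite of the two isomorphisms furnished by Lemma \ref{ppprops2}, using crucially the hypothesis $M=L$. By Lemma \ref{ppprops2} applied to the collection $\mathrm{M}$, the restriction of $\lambda_{G,\mathrm{M}}$ is an isomorphism $\varphi(\prod_i M_i)\to\prod_i\varphi(M_i)$, and applied to the collection $\mathrm{L}$, the restriction of $\gamma_{G,\mathrm{L}}$ is an isomorphism $\coprod_j\varphi(L_j)\to\varphi(\coprod_j L_j)$. Since $M=\prod_i M_i$ and $L=\coprod_j L_j$ are the same object and $\varphi$ is a functor, the groups $\varphi(\prod_i M_i)$ and $\varphi(\coprod_j L_j)$ coincide (both being the same pp-definable subgroup of $\mathcal{T}(G,M)=\mathcal{T}(G,L)$). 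Hence I would define $\kappa\langle\varphi\rangle$ to be the composite
\[
\prod_i\varphi(M_i)\xrightarrow{\ \sim\ }\varphi(M)=\varphi(L)\xrightarrow{\ \sim\ }\coprod_j\varphi(L_j),
\]
where the first arrow is the inverse of the restriction of $\lambda_{G,\mathrm{M}}$ and the second is the inverse of the restriction of $\gamma_{G,\mathrm{L}}$. This is an isomorphism by construction, with inverse the composite of the restriction of $\gamma_{G,\mathrm{L}}$ followed by the restriction of $\lambda_{G,\mathrm{M}}$.

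It then remains to check that the two explicit componentwise formulas in the statement compute this composite and its inverse. For $\kappa\langle\varphi\rangle$: given $(f_i\mid i)\in\prod_i\varphi(M_i)$, the description of $\lambda_{G,\mathrm{M}}$ from Notation \ref{categnotation} shows that the unique preimage $f\in\varphi(M)$ satisfies $p_{i,\mathrm{M}}f=f_i$ for all $i$, i.e. $f\colon G\to\prod_i M_i$ is exactly the morphism given by the universal property. Applying the inverse of the restriction of $\gamma_{G,\mathrm{L}}$, we seek $(g_j\mid j)\in\coprod_j\varphi(L_j)$ with $\sum_j u_{j,\mathrm{L}}g_j=f$; postcomposing with $q_{k,\mathrm{L}}$ and using that $q_{k,\mathrm{L}}u_{j,\mathrm{L}}$ equals $1_{L_j}$ when $j=k$ and is zero otherwise gives $g_k=q_{k,\mathrm{L}}f$, so $\kappa\langle\varphi\rangle(f_i\mid i)=(q_{j,\mathrm{L}}f\mid j)$. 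A point worth flagging is that the tuple $(q_{j,\mathrm{L}}f\mid j)$ genuinely lies in the coproduct, i.e. is zero in all but finitely many coordinates; this is not apparent from the formula itself and is exactly what Lemma \ref{ppprops2} delivers. Dually, for $(g_j\mid j)\in\coprod_j\varphi(L_j)$ the restriction of $\gamma_{G,\mathrm{L}}$ sends it to $f=\sum_j u_{j,\mathrm{L}}g_j\in\varphi(L)=\varphi(M)$, and then the restriction of $\lambda_{G,\mathrm{M}}$ sends $f$ to $(p_{i,\mathrm{M}}f\mid i)=\big(\sum_j p_{i,\mathrm{M}}u_{j,\mathrm{L}}g_j\mid i\big)$, which is the stated formula for $\kappa^{-1}\langle\varphi\rangle$.

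Once Lemma \ref{ppprops2} is available the argument is purely formal, so I do not expect a real obstacle; the only mild care needed is in translating the abstractly defined composite of restricted universal-property maps into the explicit indexwise formulas, for which the inputs are the descriptions of $\lambda_{G,\mathrm{M}}$ and $\gamma_{G,\mathrm{L}}$ in Notation \ref{categnotation} together with the coproduct relations among the $u_{j,\mathrm{L}}$ and $q_{j,\mathrm{L}}$.
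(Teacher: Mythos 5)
Your proposal is correct and follows essentially the same route as the paper: both define $\kappa\langle\varphi\rangle$ as the composite of the inverses of the restricted isomorphisms $\lambda_{G,\mathrm{M}}$ and $\gamma_{G,\mathrm{L}}$ from Lemma \ref{ppprops2}, using $M=L$ to identify $\varphi(\prod_i M_i)$ with $\varphi(\coprod_j L_j)$. The paper declares the match with the explicit componentwise formulas ``straightforward,'' whereas you carry out that verification, which is a welcome addition.
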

\begin{proof}By Lemma \ref{ppprops2} the restriction of $\gamma_{G,\mathrm{L}}$ and $\lambda_{G,\mathrm{M}}$ define isomorphisms 
\[
\begin{array}{cc}
\delta\colon\coprod_{j}\varphi(L_{j})\to \varphi(\coprod_{j}L_{j}), \mu\colon\varphi(\prod_{i}M_{i})\to\prod_{i}\varphi(M_{i}).
\end{array}
\]
Letting $\kappa\langle\varphi\rangle=\delta^{-1}\mu^{-1}$ and  $\kappa\langle\varphi\rangle^{-1}=\mu\delta$, the proof is straightforward.
\end{proof}
The proof of Lemma \ref{zimmtech2} follows the proof of the cited result of Huisgen-Zimmerman.
\begin{lemma}\label{zimmtech2}\emph{\cite[Lemma 4]{Hui1979}} Consider Notation \ref{techynotation} and let $\varphi_{1}(M)\supseteq \varphi_{2}(M) \supseteq \dots$ be a descending chain of pp-definable subgroups of $M$ of some sort $G\in\mathcal{S}$. Let 
\[
\begin{array}{cc}
\Psi(n)=\{\varphi_{n}(L_{j})\mid j\in \mathtt{J}\}, & \Pi(n)=\{\prod_{i<n}\varphi_{n}(M_{i}),\prod_{i\geq n}\varphi_{n}(M_{i})\},
\end{array}
\]
for any $n\in\mathbb{N}$, and for any $j$ consider the map $\rho_{n,j}=q_{j,\Psi(n)}\kappa\langle\varphi_{n}\rangle u_{\geq,\Pi(n)}$ given by
\[
\xymatrix{\prod_{i\geq n}\varphi_{n}(M_{i})\ar[rr]^{u_{\geq,\Pi(n)}} & & \prod_{i\in\mathbb{N}}\varphi_{n}(M_{i})\ar[r]^{\kappa\langle\varphi_{n}\rangle} & \coprod_{j\in J}\varphi_{n}(L_{j})\ar[rr]^{q_{j,\Psi(n)}} & & \varphi_{n}(L_{j}).
}
\]
For some $r\in\mathbb{N}$ and some $\mathtt{J}'\subseteq \mathtt{J}$ finite,  $\mathrm{im}(\rho_{r,j})\subseteq \varphi_{n}(L_{j})$ for all $n\geq r$ and $j\notin \mathtt{J}'$.
\end{lemma}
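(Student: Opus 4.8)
The plan is to argue by contradiction, carrying the combinatorial heart of Huisgen-Zimmerman's argument \cite{Hui1979} into the present setting by means of Corollary \ref{zimmtech}. After replacing each $\varphi_n$ by $\varphi_1\wedge\dots\wedge\varphi_n$ we may assume $\varphi_{n+1}$ implies $\varphi_n$, so that the chains $\varphi_n(M_i)$ and $\varphi_n(L_j)$, and more generally $\varphi_n(N)$ for every object $N$ of $\mathcal{T}$, are all descending. The first step is to make the $\rho_{n,j}$ concrete: by Corollary \ref{zimmtech}, $\rho_{n,j}$ sends a tuple $(t_i\mid i\geq n)$ to $q_{j,\mathrm{L}}f$, where $f\colon G\to M$ is the morphism with $p_{i,\mathrm{M}}f=t_i$ for $i\geq n$ and $p_{i,\mathrm{M}}f=0$ for $i<n$. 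Two observations follow. First, extending a tuple by $0$ at the index $n$ exhibits $\rho_{n+1,j}$ as a restriction of $\rho_{n,j}$, so $\mathrm{im}(\rho_{n+1,j})\subseteq\mathrm{im}(\rho_{n,j})$. Second, since $G$ is compact and $M=\coprod_{j}L_j$, the map $\gamma_{G,\mathrm{L}}$ of Notation \ref{categnotation} is an isomorphism (Definition \ref{compactob}), so every morphism $f\colon G\to M$ has $q_{j,\mathrm{L}}f=0$ for all but finitely many $j$; in particular every tuple $t$ satisfies $\rho_{n,j}(t)=0$ for all but finitely many $j$.

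Now suppose the conclusion fails. Then, by the first observation together with $\mathrm{im}(\rho_{r,j})\subseteq\varphi_r(L_j)$ and descent, for every $r$ the set $\{\,j\mid\mathrm{im}(\rho_{r,j})\not\subseteq\bigcap_{n\geq r}\varphi_n(L_j)\,\}$ is infinite. Using this and the countability of $\mathbb{N}$, I would build inductively a strictly increasing sequence $r_1<r_2<\dots$, pairwise distinct indices $j_1,j_2,\dots\in\mathtt{J}$, numbers $n_k$ with $r_k\leq n_k<r_{k+1}$, and tuples $t^{(k)}$ in $\prod_{i\geq r_k}\varphi_{r_k}(M_i)$ with $\rho_{r_k,j_k}(t^{(k)})\notin\varphi_{n_k}(L_{j_k})$, chosen — this is the delicate part, carried out exactly as by Huisgen-Zimmerman — so that the $t^{(k)}$ assemble, block by block along $\mathbb{N}=\bigsqcup_k[r_k,r_{k+1})$, into a single tuple $x=(x_i)$ in $\prod_{i}\varphi_1(M_i)$ whose induced morphism $f_x\colon G\to M$, with $p_{i,\mathrm{M}}f_x=x_i$, still satisfies $q_{j_k,\mathrm{L}}f_x\neq 0$ for every $k$. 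Since the $j_k$ are distinct, $f_x$ then has infinite support, contradicting the second observation; this contradiction proves the lemma.

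I expect the assembly of the $t^{(k)}$ to be the main obstacle. The difficulty is precisely that the $M_i$ are not assumed compact, so the projections $q_{j,\mathrm{L}}\colon M\to L_j$ need not factor through any finite sub-product of the $M_i$, and therefore altering a tuple only in its ``tail'' $\mathbb{N}$-coordinates can change its image under $\rho_{r,j}$ in an uncontrolled way. Handling this is the content of Huisgen-Zimmerman's lemma, which I would adapt: one leans on the coherence of each $\varphi_r$ — so that $\varphi_r$ commutes with both the product and the coproduct in play, via Lemma \ref{ppprops2} and the isomorphism $\kappa\langle\varphi_r\rangle$ of Corollary \ref{zimmtech} — and on the descending chain condition invoked one step at a time, together with the freedom to take each $r_{k+1}$ large (in terms of the tuple $t^{(k)}$ already chosen and the finite $\mathtt{J}$-support of the morphism it induces), in order to ensure that the contributions of the later blocks cannot cancel the $j_k$-component created by block $k$; it is here that the countability of the index set $\mathbb{N}$ is essential. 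The remaining checks, that $x$ lies in $\prod_{i}\varphi_1(M_i)$ and that $f_x$ is well defined, are routine.
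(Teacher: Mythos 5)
Your skeleton is the paper's: negate the conclusion, use that $G$ is compact so that every morphism $G\to M=\coprod_{j}L_{j}$ has finite support over $\mathtt{J}$, and manufacture from the failure of the conclusion a single element of $\prod_{i}\mathcal{T}(G,M_{i})$ whose image in $\coprod_{j}\mathcal{T}(G,L_{j})$ is non-zero at infinitely many indices $j_{k}$. Your reformulation of the negated conclusion (for each $r$ the set of $j$ with $\mathrm{im}(\rho_{r,j})\not\subseteq\bigcap_{n\geq r}\varphi_{n}(L_{j})$ is infinite) and the normalisation making all chains $\varphi_{n}(N)$ descending are both fine. But the proof is not complete: the step you label ``the delicate part, carried out exactly as by Huisgen-Zimmerman'' is the entire content of the lemma, and you do not carry it out. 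In particular the assertion $q_{j_{k},\mathrm{L}}f_{x}\neq 0$ --- the only non-formal claim in the whole argument --- is never verified.

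Concretely, two things are missing. First, the $j_{k}$-coordinate of $f_{x}$ receives contributions from \emph{all} blocks, and the earlier ones must be controlled as well as the later ones. The paper does this by an explicit correction: starting from witnesses $\underline{n}_{d}$ with $\rho_{r(d),j(d)}(\underline{n}_{d})\notin\varphi_{r(d+1)}(L_{j(d)})$, it replaces $u_{\geq,\Pi(r(d))}(\underline{n}_{d})$ by $u_{\geq,\Pi(r(d))}(\underline{n}_{d})-\kappa^{-1}\langle\varphi_{r(d)}\rangle(l^{d}_{j}\mid j\in\mathtt{J})$, subtracting the components supported at the later indices $j(d')$ with $d'>d$, so that $\rho_{r(d),j(d)}(\underline{m}_{t})=0$ for all $t<d$. (Your alternative --- enlarging the finite set $\mathtt{J}'$ at each stage to avoid the finite $\mathtt{J}$-supports of the already chosen tuples --- would also work, but it is only gestured at, not stated or used.) Second, ``taking $r_{k+1}$ large'' is not by itself what stops the later blocks from cancelling the $j_{k}$-component: the actual mechanism is that $r(k+1)$ is taken to be the witness $n$ supplied by the negated conclusion, so that $\rho_{r(k),j(k)}(\underline{m}_{k})\notin\varphi_{r(k+1)}(L_{j(k)})$, while every later block contributes an element of $\varphi_{r(i)}(L_{j(k)})\subseteq\varphi_{r(k+1)}(L_{j(k)})$ because the chain descends and $r(i)\geq r(k+1)$; the sum of an element outside a subgroup with an element inside it is non-zero. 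Without these two points written out, together with the check that the componentwise sum $\tilde{\underline{m}}=\sum_{d}\tilde{\underline{m}}_{d}$ is well defined (each coordinate meets only finitely many blocks because the corrected tuples vanish below $r(d)$), what you have is a correct plan rather than a proof.
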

\begin{proof}
Considering Notation \ref{categnotation}, for each $n\in\mathbb{N}$, we have that $u_{\geq,\Pi(n)}$ is (the inclusion) given by sending $(x_{n+i}\mid i\in\mathbb{N})=(x_{n},x_{n+1},\dots)$ (where $x_{i}\in\varphi_{n}(M_{i})$) to the sequence $(0,\dots,0,x_{n},x_{n+1},\dots)$, the initial $n$ terms of which are $0$. Similarly, we have that $q_{j,\Psi(n)}$ is the restriction of $q_{j,\mathcal{T}(G,\mathrm{L})}$ for each $j$ and each $n$. 

We may assume $\mathtt{J}$ is non-empty since otherwise the statement is automatic. Assume for a contradiction that for any $r\in\mathbb{N}$ and any finite subset $\mathtt{J}'$ of $\mathtt{J}$, there exists $n\in\mathbb{N}$ with $n\geq r$ and there exists $j\in \mathtt{J}\setminus \mathtt{J}'$ such that $\mathrm{im}(\rho_{r,j})\not\subseteq \varphi_{n}(L_{j})$. Following \cite[Lemma 4]{Hui1979}, we firstly claim that there exists: a strictly increasing sequence of integers $r(0)<r(1)<r(2)<r(3)<\dots$; a sequence $j(0),j(1),j(2),\dots$ of pairwise distinct elements of $\mathtt{J}$ (so, where $j(n)\notin\{j(0),\dots,j(n-1)\}$ for all $n>0$); and a sequence of $\mathbb{N}$-tuples $\underline{m}_{0},\underline{m}_{1},\underline{m}_{2},\dots$ where
\[
\begin{array}{ccc}
\underline{m}_{d}\in\prod_{s\geq r(d)}\varphi_{r(d)}(M_{s}), & \rho_{r(d),j(d)}(\underline{m}_{d})\notin \varphi_{r(d+1)}(L_{j(d)}), & \rho_{r(d),j(d)}(\underline{m}_{t})=0,
\end{array}
\]
for all $d\in\mathbb{N}$ and all $t\in\mathbb{N}$ with $t<d$. We proceed inductively. Choose an element $j(-1)\in \mathtt{J}$. Let $\mathtt{J}'_{0}=\{j(-1)\}$ and $r(0)=1$. By our assumption that the conclusion is false, there exists an integer $r(1)>0$ and an element $j(0)\in \mathtt{J}\setminus\{j(-1)\}$ where $\mathrm{im}(\rho_{r(0),j(0)})\not\subseteq \varphi_{r(1)}(L_{j(0)})$. Hence there exists $\underline{n}_{0}\in\prod_{i\geq r(0)}\varphi_{r(0)}(M_{i})$ where $\rho_{r(0),j(0)}(\underline{n}_{0})\notin \varphi_{r(1)}(L_{j(0)})$, and so $r(1)>r(0)$.

We now iterate this process, yielding sequences $(r(d)\mid d\in\mathbb{N})$,  $(j(d)\mid d\in\mathbb{N})$ and  $(\underline{n}_{d}\mid d\in\mathbb{N})$ where $r(d)$ is a positive integer, $j(d)\in \mathtt{J}$ and $\underline{n}_{d}\in\prod_{i\geq r(d)}\varphi_{r(d)}(M_{i})$ and such that $r(d+1)>r(d)$, $j(d+1)\in J\setminus\{j(-1),\dots,j(d)\}$, and $\rho_{r(d),j(d)}(\underline{n}_{d})\notin \varphi_{r(d+1)}(L_{j(d)})$. Now fix $t\in\mathbb{N}$. Since we are considering coproducts of abelian groups, note that we have $\rho_{r(t),j}(\underline{n}_{t})=0$ for all but finitely many $j\in \mathtt{J}$. For each $j$ define the map $l_{j}^{t}\colon G\to L_{j}$ in $\varphi_{r(t)}(L_{j})$ by $l_{j}^{t}=\rho_{r(t),j}(\underline{n}_{t})$ if $j=j(d)$ for some $d>t$, and $l_{j}^{t}=0$ otherwise. Now let 
\[
\begin{array}{c}
\tilde{\underline{m}}_{d}=u_{\geq,\Pi(r(d))}(\underline{n}_{d})-\kappa^{-1}\langle\varphi_{r(d)}\rangle(l^{d}_{j}\mid j\in \mathtt{J})\in\prod_{i\in\mathbb{N}}\varphi_{r(d)}(M_{i}).
\end{array}
\]
By construction we have $p_{i,\mathrm{M}}(\tilde{\underline{m}}_{d})=0$ for all $i<r(d)$, and if  $t<d$ then
\[
\begin{array}{c}
q_{j(d),\mathcal{T}(G,\mathrm{L})}(\kappa\langle\varphi_{r(d)}\rangle(\tilde{\underline{m}}_{t}))=q_{j(d),\mathcal{T}(G,\mathrm{L})}(\kappa\langle\varphi_{r(d)}\rangle(u_{\geq,\Pi(r(d))}(\underline{n}_{t})))-l^{t}_{j(d)}=0.
\end{array}
\]
Now, writing $\tilde{\underline{m}}_{d}=(m_{d,0},m_{d,1},\dots)$ where $m_{d,i}\in\varphi_{r(d)}(M_{i})$ for all $i\in\mathbb{N}$, the above gives $m_{d,i}=0$ for all $i<r(d)$, so we may define 
\[
\begin{array}{c}
\underline{m}_{d}=(m_{d,r(d)+i}\mid i\in \mathbb{N})=(m_{d,r(d)},m_{d,r(d)+1},m_{d,r(d)+2}\dots)\in\prod_{s\geq r_{d}}\varphi_{r(d)}(M_{s}).
\end{array}
\]
So we have $\tilde{\underline{m}}_{d}=u_{\geq,\Pi(r(d))}(\underline{m}_{d})$, and  therefore
\[
\begin{array}{c}
\rho_{r(d),j(d)}(\underline{m}_{d})=q_{j(d),\mathcal{T}(G,\mathrm{L})}(\kappa\langle\varphi_{r(d)}\rangle(\tilde{\underline{m}}_{d}))=
\rho_{r(d),j(d)}(\underline{n}_{d})-l^{d}_{j(d)}\notin \varphi_{r(d+1)}(L_{j(d)}),
\end{array}
\]
since by definition $l^{d}_{j(d)}=0$. Our calculations above likewise show that if $t<d$ then we have $\rho_{r(d),j(d)}(\underline{m}_{t})=0$. This verifies our initial claim. Now let $d$ vary. Since $r(d)<r(d+1)$ for all $d$, for each $i\in\mathbb{N}$ observe that there are finitely many $d$ with $r(d)\leq i$. Hence the sum
\[
\begin{array}{c}
\tilde{\underline{m}}=\sum_{i\in\mathbb{N}}\tilde{\underline{m}}_{i}=(\sum_{d\in\mathbb{N},r(d)\leq i}m_{d,i}\mid i\in\mathbb{N})\in\prod_{i\in\mathbb{N}}\mathcal{T}(G,M_{i})
\end{array}
\]
is well-defined. Now, for each $l\in\mathbb{N}$, by combining everything so far with Corollary \ref{zimmtech}, we have 
\[\begin{array}{c}
q_{j(l),\mathcal{T}(G,\mathrm{L})}(\gamma^{-1}_{G,\mathrm{L}}(\lambda^{-1}_{G,\mathrm{M}}(\tilde{\underline{m}})))=\sum_{i\in\mathbb{N}}q_{j(l),\mathcal{T}(G,\mathrm{L})}(\gamma^{-1}_{G,\mathrm{L}}(\lambda^{-1}_{G,\mathrm{M}}(\tilde{\underline{m}}_{i})))\\=\sum_{i\in\mathbb{N}}q_{j(l),\mathcal{T}(G,\mathrm{L})}(\kappa\langle\varphi_{r(i)}\rangle(\tilde{\underline{m}}_{i}))=\sum_{i\geq l}q_{j(l),\mathcal{T}(G,\mathrm{L})}(\kappa\langle\varphi_{r(i)}\rangle(\tilde{\underline{m}}_{i}))\\=\sum_{i\geq l}\rho_{r(i),j(l)}(\underline{m}_{i})=\rho_{r(l),j(l)}(\underline{m}_{l})+\sum_{i>l}\rho_{r(i),j(l)}(\underline{m}_{i}).
\end{array}
\]
Now recall that $\rho_{r(l),j(l)}(\underline{m}_{l})\notin \varphi_{r(l+1)}(L_{j(l)})$. Since $\varphi_{1}(M)\supseteq\varphi_{2}(M)\supseteq\dots$ is descending and $r(i+1)>r(i)$ for all $i$, we have $\rho_{r(i),j(l)}(\underline{m}_{i})\in \varphi_{r(i)}(L_{j(l)})\subseteq \varphi_{r(l+1)}(L_{j(l)})$ whenever $i>l$. Together with the above, this shows \[q_{j(l),\mathcal{T}(G,\mathrm{L})}(\gamma^{-1}_{G,\mathrm{L}}(\lambda^{-1}_{G,\mathrm{M}}(\tilde{\underline{m}})))\neq 0\] for all $l$. Since $\gamma^{-1}_{G,\mathrm{L}}(\lambda^{-1}_{G,\mathrm{M}}(\tilde{\underline{m}}))$ lies in the coproduct $\coprod_{j}\mathcal{T}(G,L_{j})$, and therefore must have had finite support over $j\in \mathtt{J}$, we have a contradiction, since the set $\{j(l)\mid l\in\mathbb{N}\}$ is in bijection with $\mathbb{N}$.
\end{proof} 
\section{Annihilator subobjects and pp-definable subgroups.}\label{4}
Recall that, in a category with all small coproducts, a set $\{\mathscr{G}_{\alpha}\mid\alpha\in \mathtt{X}\}$ of objects is called a set of \textit{generators} provided, for each object $\mathscr{Q}$, there is an epimorphism $\coprod_{\alpha}\mathscr{G}_{\alpha}\to\mathscr{Q}$. In case $ \mathtt{X}$ is a singleton we say the category \textit{has a generator}. Recall an additive category $\mathcal{A}$ is \textit{Grothendieck} provided: $\mathcal{A}$ is abelian; $\mathcal{A}$ has all small coproducts; $\mathcal{A}$ has a generator; and the direct limit of any short exact sequence in $\mathcal{A}$ is again exact. 
\begin{remark}\label{locallycoherentremark}
Let $\mathcal{A}$ be a Grothendieck category. Recall that an object $\mathscr{Q}$ of $\mathcal{A}$ is \textit{finitely presented} provided the functor $\mathcal{A}(\mathscr{Q},-)\colon\mathcal{A}\to\mathbf{Ab}$ commutes with direct limits. Recall that an object $\mathscr{S}$ of $\mathcal{A}$ is \textit{finitely generated} provided there is an exact sequence $\mathscr{R}\to\mathscr{Q}\to 0$ in $\mathcal{A}$. The categories considered both in work of Garcia and Dung \cite{GarDun1994} and in work of Harada \cite{Har1973} were Grothendieck categories with a set of finitely generated generators.

Following Krause \cite{Kra1997}, a category $\mathcal{A}$ is said to be \textit{locally coherent} provided: $\mathcal{A}$ is a Grothendieck category; $\mathcal{A}$ has a set $\{\mathscr{G}_{\alpha}\mid\alpha\in \mathtt{X}\}$ of generators such that each $\mathscr{G}_{\alpha}$ is finitely presented; and the full subcategory of $\mathcal{A}$ consisting of finitely presented objects is abelian. As noted at the top of \cite[p.3]{GarPre2005}, $\mathbf{Mod}\text{-}\mathcal{T}^{c}$ is locally coherent.
\end{remark}
Thus, in Definition \ref{defannsub} and Lemmas \ref{annsubmodT} and \ref{annsubmodTdcc}, we specify various definitions and results from  \cite{GarDun1994} and \cite{Har1973} to $\mathbf{Mod}\text{-}\mathcal{T}^{c}$ (which can be done by Remark \ref{locallycoherentremark}). We now recall a notion introduced by Harada.
\begin{definition}\label{defannsub}\cite[\S 1]{Har1973} Let $\mathcal{A}$ be a Grothendieck category with a set $\{\mathscr{G}_{\alpha}\mid\alpha\in \mathtt{X}\}$ of finitely generated generators. Let $\mathscr{Q}$ and $\mathscr{R}$ be objects in $\mathcal{A}$. A subobject $\mathscr{P}$ of $\mathscr{Q}$ is said to be an $\mathscr{R}$-\textit{annihilator subobject} of $\mathscr{Q}$ provided $\mathscr{P}=\bigcap_{\mathfrak{f}\in \mathtt{K}} \mathrm{ker}(\mathfrak{f})$ where the intersection is taken over morphisms $\mathfrak{f}\colon\mathscr{Q}\to\mathscr{R}$ running through some $\mathtt{K}\subseteq\mathcal{A}(\mathscr{Q},\mathscr{R})$.
\end{definition}
Lemma \ref{annsubmodT} focuses on a particular context of Definition \ref{defannsub}. That is, we specifiy to the locally coherent category $\mathbf{Mod}\text{-}\mathcal{T}^{c}$ and consider the image of objects under $\mathbf{Y}$. Note Lemma \ref{annsubmodT} was written only to simplify the proof of Lemma \ref{annsubmodTdcc}.
\begin{lemma}\label{annsubmodT}
Let $\mathfrak{q}\colon\mathbf{Y}(X)\to \mathscr{Q}$ be an epimorphism in $\mathbf{Mod}\text{-}\mathcal{T}^{c}$ where the object $X$ of $\mathcal{T}$ is compact. If $M$ and $Z$ are objects of $\mathcal{T}$ and $\mathcal{T}^{c}$ respectively, then for any $\mathbf{Y}(M)$-annihilator subobject $\mathscr{P}=\bigcap_{\mathfrak{f}\in\mathtt{K}}\mathrm{ker}(\mathfrak{f})$ of $\mathscr{Q}$ \emph{(}where $\mathtt{K}\subseteq\mathcal{A}(\mathscr{Q},\mathscr{R})$\emph{)} we have
\[\mathscr{P}(Z)=\{\mathfrak{q}_{Z}(g)\mid g\in\mathcal{T}(Z,X)\text{ and }\mathfrak{f}_{X}(\mathfrak{q}_{X}(1_{X}))g=0\text{ for all }\mathfrak{f}\in\mathtt{K}\}.
\]\end{lemma}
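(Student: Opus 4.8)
The plan is to reduce the statement to two standard facts about the functor category $\mathbf{Mod}\text{-}\mathcal{T}^{c}$. The first is that, being a category of additive functors into $\mathbf{Ab}$ (indeed a locally coherent Grothendieck category, by Remark~\ref{locallycoherentremark}), kernels and intersections of subobjects are computed pointwise, and a natural transformation is an epimorphism exactly when each of its components is surjective; that is, evaluation at each compact object is an exact functor which detects epimorphisms. The second is that, since $X$ is compact, the object $\mathbf{Y}(X)=\mathcal{T}(-,X)\vert$ coincides with the representable functor $\mathcal{T}^{c}(-,X)$ on $\mathcal{T}^{c}$, so the Yoneda lemma applies: for any $\mathscr{F}$ in $\mathbf{Mod}\text{-}\mathcal{T}^{c}$ a morphism $\eta\colon\mathbf{Y}(X)\to\mathscr{F}$ is determined by $\eta_{X}(1_{X})\in\mathscr{F}(X)$, and for any morphism $g\colon Z\to X$ in $\mathcal{T}^{c}$ one has $\eta_{Z}(g)=\mathscr{F}(g)(\eta_{X}(1_{X}))$, obtained from the naturality square of $\eta$ along $g$ evaluated at $1_{X}$.

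First I would note that, as $\mathfrak{q}$ is an epimorphism, its component $\mathfrak{q}_{Z}\colon\mathcal{T}(Z,X)\to\mathscr{Q}(Z)$ is surjective, so every element of $\mathscr{Q}(Z)$ has the form $\mathfrak{q}_{Z}(g)$ for some $g\in\mathcal{T}(Z,X)$. Next, applying the Yoneda identification above to the composite $\mathfrak{f}\mathfrak{q}\colon\mathbf{Y}(X)\to\mathbf{Y}(M)$ for each $\mathfrak{f}\in\mathtt{K}$ gives, for every $g\colon Z\to X$, the equality $\mathfrak{f}_{Z}(\mathfrak{q}_{Z}(g))=\mathbf{Y}(M)(g)\bigl(\mathfrak{f}_{X}(\mathfrak{q}_{X}(1_{X}))\bigr)=\mathfrak{f}_{X}(\mathfrak{q}_{X}(1_{X}))\,g$, where the last step merely unwinds the (contravariant) action of $\mathbf{Y}(M)=\mathcal{T}(-,M)\vert$ on $g$, namely precomposition. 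Finally, since kernels and intersections in $\mathbf{Mod}\text{-}\mathcal{T}^{c}$ are pointwise, $\mathscr{P}(Z)=\bigcap_{\mathfrak{f}\in\mathtt{K}}\ker(\mathfrak{f}_{Z})$ consists precisely of those $\mathfrak{q}_{Z}(g)\in\mathscr{Q}(Z)$ with $\mathfrak{f}_{Z}(\mathfrak{q}_{Z}(g))=0$ for all $\mathfrak{f}\in\mathtt{K}$; substituting the formula just derived for $\mathfrak{f}_{Z}(\mathfrak{q}_{Z}(g))$ yields exactly the asserted description.

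I do not anticipate a genuine obstacle here: the argument is essentially an exercise in the Yoneda lemma combined with the pointwise nature of exactness in a module category over a small preadditive category. The only points requiring care are bookkeeping the variance of $\mathbf{Y}(X)$ and $\mathbf{Y}(M)$ (both contravariant, so $\mathbf{Y}(-)(g)$ is precomposition with $g$) and making explicit that compactness of $X$ is exactly what licenses treating $\mathbf{Y}(X)$ as a representable functor on $\mathcal{T}^{c}$, so that the Yoneda lemma may be invoked.
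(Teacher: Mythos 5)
Your proposal is correct and follows essentially the same route as the paper: pointwise surjectivity of the epimorphism $\mathfrak{q}$, the identity $\mathfrak{f}_{Z}(\mathfrak{q}_{Z}(g))=\mathfrak{f}_{X}(\mathfrak{q}_{X}(1_{X}))g$ obtained from naturality along $g$ evaluated at $1_{X}$ (which the paper phrases as a commutative diagram rather than invoking Yoneda by name), and the pointwise computation of kernels and intersections. No gaps.
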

\begin{proof}It suffices to assume $\mathtt{K}\neq\emptyset$. Let $p\in\mathscr{Q}(Z)$. Since $\mathfrak{q}_{Z}$ is onto, $p=\mathfrak{q}_{Z}(g)$ for some $g\in\mathcal{T}(Z,X)$.
Since $\mathfrak{q}$ and each $\mathfrak{f}$ are morphisms in $\mathbf{Mod}\text{-}\mathcal{T}^{c}$ the diagram of abelian groups given by
\[
\xymatrix@R=1.5pc{\mathcal{T}(X,X)\ar[d]_{\mathcal{T}(g,X)}\ar[r]_{\mathfrak{q}_{X}} & \mathscr{Q}(X)\ar[d]^{\mathscr{Q}(g)}\ar[r]_{\mathfrak{f}_{X}} & \mathcal{T}(X,M)\ar[d]^{\mathcal{T}(g,M)}\\
\mathcal{T}(Z,X)\ar[r]^{\mathfrak{q}_{Z}} & \mathscr{Q}(Z)\ar[r]^{\mathfrak{f}_{Z}} & \mathcal{T}(Z,M)}
\]
commutes. By the commutativity of the diagram we have $\mathfrak{q}_{Z}(g)=\mathscr{Q}(g)(\mathfrak{q}_{X}(1_{X}))$ and (hence) $\mathfrak{f}_{Z}(p)=\mathfrak{f}_{X}(\mathfrak{q}_{X}(1_{X}))g$. Now suppose $p\in\bigcap_{\mathfrak{f}\in\mathtt{K}}\mathrm{ker}(\mathfrak{f}_{Z})$. By the above this means $\mathfrak{f}_{X}(\mathfrak{q}_{X}(1_{X}))g=0$ for all $\mathfrak{f}$. Hence $\mathscr{P}(Z)$ lies in the ride hand side of the required equality. The reverse inclusion is straightforward.
\end{proof}
Lemma \ref{annsubmodTdcc} is based on a proof of a given by Huisgen-Zimmerman \cite[Corollary 7]{Hui2000} of a well-known characterisation of  $\Sigma$-\textit{injective} modules due to Faith \cite[Proposition 3]{Fai1966}. We use Lemma \ref{annsubmodTdcc} to simplify the proof of Lemma \ref{sigmathendcc}, a key result employed in the sequel. 
\begin{lemma}\label{annsubmodTdcc}
Let $M$ and $X$ be objects in $\mathcal{T}$ and $\mathcal{T}^{c}$ respectively, and let $\mathcal{T}(-,X)\to \mathscr{Q}\to0$ be a sequence in $\mathbf{Mod}\text{-}\mathcal{T}^{c}$ which is exact. Any strictly ascending chain of \emph{(}$\mathbf{Y}(M)$-annihilator subobjects of $\mathscr{Q}$\emph{)} gives a strictly descending chain of \emph{(}pp-definable subgroups of $M$ of sort $X$\emph{)}.
\end{lemma}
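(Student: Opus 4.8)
The plan is to convert the ascending chain into a single sequence of test morphisms into $X$, and to use these to cut out a descending chain of pp-definable subgroups by conjunctions of divisibility formulas, following the template of Huisgen-Zimmerman's proof of Faith's criterion. Write the given chain as $\mathscr{P}_{1}\subsetneq\mathscr{P}_{2}\subsetneq\cdots$, each $\mathscr{P}_{n}$ a $\mathbf{Y}(M)$-annihilator subobject of $\mathscr{Q}$. First I would pass to the \emph{saturated} index sets $\mathtt{K}_{n}=\{\mathfrak{f}\colon\mathscr{Q}\to\mathbf{Y}(M)\mid\mathscr{P}_{n}\subseteq\ker(\mathfrak{f})\}$; since $\mathscr{P}_{n}$ is an annihilator subobject we have $\mathscr{P}_{n}=\bigcap_{\mathfrak{f}\in\mathtt{K}_{n}}\ker(\mathfrak{f})$, and $\mathscr{P}_{n}\subseteq\mathscr{P}_{n+1}$ yields $\mathtt{K}_{n+1}\subseteq\mathtt{K}_{n}$. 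This chain of saturations is strict, because $\mathtt{K}_{n}=\mathtt{K}_{n+1}$ would force $\mathscr{P}_{n}=\bigcap_{\mathtt{K}_{n}}\ker(\mathfrak{f})=\bigcap_{\mathtt{K}_{n+1}}\ker(\mathfrak{f})=\mathscr{P}_{n+1}$. So for each $n$ I may choose $\mathfrak{f}_{n}\in\mathtt{K}_{n}\setminus\mathtt{K}_{n+1}$ and put $h_{n}=(\mathfrak{f}_{n})_{X}(\mathfrak{q}_{X}(1_{X}))\in\mathcal{T}(X,M)$.

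Next I would extract test morphisms $g_{n}\colon Z_{n}\to X$. Since $\mathfrak{f}_{n}\notin\mathtt{K}_{n+1}$, i.e.\ $\mathscr{P}_{n+1}\not\subseteq\ker(\mathfrak{f}_{n})$, there is a compact object $Z_{n}$ and some $p_{n}\in\mathscr{P}_{n+1}(Z_{n})$ with $(\mathfrak{f}_{n})_{Z_{n}}(p_{n})\neq 0$. Applying Lemma \ref{annsubmodT} to $\mathscr{P}_{n+1}=\bigcap_{\mathfrak{f}\in\mathtt{K}_{n+1}}\ker(\mathfrak{f})$ I may write $p_{n}=\mathfrak{q}_{Z_{n}}(g_{n})$ with $g_{n}\in\mathcal{T}(Z_{n},X)$ and $\mathfrak{f}_{X}(\mathfrak{q}_{X}(1_{X}))g_{n}=0$ for every $\mathfrak{f}\in\mathtt{K}_{n+1}$. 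Then, using the identity $\mathfrak{f}_{Z}(\mathfrak{q}_{Z}(g))=\mathfrak{f}_{X}(\mathfrak{q}_{X}(1_{X}))g$ established inside the proof of Lemma \ref{annsubmodT}, I record two facts: (a) $h_{n}g_{n}=(\mathfrak{f}_{n})_{Z_{n}}(p_{n})\neq 0$; and (b) whenever $k<n$ one has $\mathtt{K}_{n}\subseteq\mathtt{K}_{k+1}$, since $\mathscr{P}_{k+1}\subseteq\mathscr{P}_{n}$, so $\mathfrak{f}_{n}\in\mathtt{K}_{k+1}$ and hence $h_{n}g_{k}=(\mathfrak{f}_{n})_{X}(\mathfrak{q}_{X}(1_{X}))g_{k}=0$.

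Finally I would assemble the chain. Completing each $g_{k}$ to a triangle $Z_{k}\xrightarrow{g_{k}}X\xrightarrow{g_{k}'}C_{k}\to\Sigma Z_{k}$, with $C_{k}$ compact because $\mathcal{T}^{c}$ is closed under cones, and applying the cohomological functor $\mathcal{T}(-,M)$ to it, one obtains $\{h\in\mathcal{T}(X,M)\mid hg_{k}=0\}=Mg_{k}'$, which is a pp-definable subgroup of $M$ of sort $X$. Since pp-formulas are closed under conjunction, the finite intersections $U_{n}:=\bigcap_{k=1}^{n}\{h\in\mathcal{T}(X,M)\mid hg_{k}=0\}$ are again pp-definable subgroups of $M$ of sort $X$, and $U_{1}\supseteq U_{2}\supseteq\cdots$. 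This chain is strictly descending: for each $n$, fact (b) gives $h_{n+1}g_{k}=0$ for all $k\leq n$, so $h_{n+1}\in U_{n}$, while fact (a) gives $h_{n+1}g_{n+1}\neq 0$, so $h_{n+1}\notin U_{n+1}$. Thus $U_{1}\supsetneq U_{2}\supsetneq\cdots$ is the desired strictly descending chain of pp-definable subgroups of $M$ of sort $X$. The one point requiring care is the bookkeeping of the saturations: the morphism $\mathfrak{f}_{n}$ chosen at stage $n$ must annihilate the witnesses $g_{k}$ of every earlier stage $k<n$ while not annihilating its own witness $g_{n}$; once this staircase is set up the remaining verifications are routine, with Lemma \ref{annsubmodT} carrying out the translation between subobjects of $\mathscr{Q}$ and morphisms from $X$ into $M$.
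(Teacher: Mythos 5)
Your proposal is correct and follows essentially the same route as the paper's proof: both extract a staircase of morphisms $g_{k}\colon Z_{k}\to X$ and elements of $\mathcal{T}(X,M)$ annihilating all earlier $g_{k}$ but not the current one, then realise the annihilators $\{h\mid hg_{k}=0\}$ as pp-definable subgroups via pseudocokernels and take finite intersections. Your explicit saturation of the index sets $\mathtt{K}_{n}$ is just a more careful justification of the paper's ``assume $\mathtt{K}[1]\supsetneq\mathtt{K}[2]\supsetneq\dots$ without loss of generality''; the rest matches step for step.
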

\begin{proof}Suppose $\mathscr{P}_{1}\subsetneq\mathscr{P}_{2}\subsetneq\dots$ is a strictly ascending chain of $\mathbf{Y}(M)$-annihilator subobjects of $\mathscr{Q}$, say where, for each integer $n>0$, we have $\mathscr{P}_{n}=\bigcap_{\mathfrak{f}\in\mathtt{K}[n]}\mathrm{ker}(\mathfrak{f})$ for some subset $\mathtt{K}[n]$ of morphisms in $\mathbf{Mod}\text{-}\mathcal{T}^{c}$ of the form $\mathfrak{f}\colon\mathscr{Q}\to\mathbf{Y}(M)$. 

We assume $\mathtt{K}[1]\supsetneq\mathtt{K}[2]\supsetneq\dots$ without loss of generality. For each $n$ there is an object $Z_{n}$ of $\mathcal{T}^{c}$ for which $\mathscr{P}_{n}(Z_{n})\subsetneq\mathscr{P}_{n+1}(Z_{n})$, and we choose $h_{n}\in\mathscr{P}_{n+1}(Z_{n})\setminus\mathscr{P}_{n}(Z_{n})$. Let $\mathfrak{q}\colon\mathcal{T}(X,-)\to \mathscr{Q}$ be the epimorphism in $\mathbf{Mod}\text{-}\mathcal{T}^{c}$ giving the exact sequence $\mathcal{T}(-,X)\to \mathscr{Q}\to0$. Since $\mathfrak{q}_{Z_{n}}$ is onto and $h_{n}\in\mathscr{Q}(Z_{n})$ we have $h_{n}=\mathfrak{q}_{Z_{n}}(g_{n})$ for some morphism $g_{n}\colon Z_{n}\to X$. By Lemma \ref{annsubmodT}, since $h_{n}\in\mathscr{P}_{n+1}(Z_{n})$ we have that $\mathfrak{f}_{X}(\mathfrak{q}_{X}(1_{X}))g_{n}=0$ for all $\mathfrak{f}\in\mathtt{K}[n+1]$. Similarly, since $h_{n}\notin\mathscr{P}_{n}(Z_{n})$ there exists $\mathfrak{s}(n)\in\mathtt{K}[n]$ such that $\mathfrak{s}(n)_{X}(\mathfrak{q}_{X}(1_{X}))g_{n}\neq0$. 

We now follow the proof of \cite[Proposition 3.1]{GarPre2005}. Since $\mathcal{T}^{c}$ is triangulated any morphism $g:Z\to X$ in $\mathcal{T}^{c}$ yields a triangle in $\mathcal{T}^{c}$ and an exact sequence in $\mathbf{Ab}$
\[
\begin{array}{cc}
\xymatrix{
Z\ar[r]^{g} & X\ar[r]^{b} & Y\ar[r] & \Sigma Z}, & \xymatrix{
\mathcal{T}(Y,W)\ar[r]^{-b} & \mathcal{T}(X,W)\ar[r]^{-g} & \mathcal{T}(Z,W),}
\end{array}
\]
given by applying the contravaraint functor $\mathcal{T}(-,W)\colon\mathcal{T}\to\mathbf{Ab}$. In other words, $b$  is a \textit{pseudocokernel} of $g$, and so any morphism $t\colon X\to W$ in $\mathcal{T}^{c}$ with $tg=0$ must satisfy  $t=sb$ for some $s\colon Y\to Z$. 

Let $t_{n}=\mathfrak{s}(n)_{X}(\mathfrak{q}_{X}(1_{X}))$ for each $n$. Combining what we have so far, for each $n$ we have $\mathfrak{s}(n+1)\in\mathtt{K}[n+1]$, so $t_{n+1}g_{n}=0$, and so $t_{n+1}=s_{n}b_{n}$ for some morphism $s_{n}\colon Y_{n}\to M$, and so $t_{n+1}\in M b_{n}$. On the other hand, if $t_{n+1}\in M b_{n+1}$ then $t_{n+1}g_{n+1}=0$ which contradicts that $\mathfrak{s}(n+1)_{X}(\mathfrak{q}_{X}(1_{X}))g_{n+1}\neq0$, and so $t_{n+1}\notin M b_{n+1}$. This gives a strict descending chain
\[
\begin{array}{c}
Mb_{1}\supsetneq Mb_{1} \cap Mb_{2} \supsetneq Mb_{1} \cap Mb_{2}\cap Mb_{3} \supsetneq \dots \supsetneq \bigcap_{i=1}^{d}Mb_{i}\supsetneq \dots 
\end{array}
\]
A direct application of \cite[Proposition 3.1]{GarPre2005} shows that each finite intersection $\bigcap_{i=1}^{d}Mb_{i}$ has the form $Ma_{d}$ for some morphism in $\mathcal{T}^{c}$ of the form $a_{d}\colon X\to W_{d}$. So the chain above is, as required, a strictly descending chain of pp-definable subgroups of $M$ of sort $X$.
\end{proof}
\begin{definition}\cite[\S 1]{Har1973} Let $\mathcal{A}$ be a Grothendieck category with a set of finitely generated generators. Fix an object $\mathscr{M}$ of $\mathcal{A}$. We say that $\mathscr{M}$ is $\Sigma$-\textit{injective} if, for any set $\mathtt{I}$, the coproduct $\mathscr{M}^{(\mathtt{I})}=\coprod_{i\in \mathtt{I}}\mathscr{M}$ is injective. We say that $\mathscr{M}$ is \textit{fp}-\textit{injective} if, whenever $0\to\mathscr{P}\to\mathscr{R}\to\mathscr{Q}\to 0$ is an exact sequence in $\mathcal{A}$ where $\mathscr{Q}$ is finitely presented, any morphism $\mathscr{P}\to\mathscr{M}$ extends to a morphism $\mathscr{R}\to\mathscr{M}$; see \cite[\S 1]{GarDun1994}.
\end{definition}
For the proof of Corollary \ref{sigmathendccconv} we recall two results: Proposition \ref{fpinjgd}, due to Garcia and Dung, characterises $\Sigma$-injectivity in the fp-injective setting; and Lemma \ref{fpinjkra}, due to Krause, shows that it is sufficient to consider the fp-injective setting.
\begin{proposition}\emph{\cite[Proposition 1.3]{GarDun1994}}\label{fpinjgd} Let $\mathscr{M}$ be an fp-injecitve in a Grothendieck category $\mathcal{A}$ which has a set $\mathtt{G}$ of finitely presented generators $\mathscr{G}$. Then $\mathscr{M}$ is $\Sigma$-injective if and only if, for each $\mathscr{G}\in\mathtt{G}$, every ascending chain of $\mathscr{M}$-annihilator subobjects of $\mathscr{G}$ must stabilise.
\end{proposition}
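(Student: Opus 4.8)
The plan is to prove the two implications separately, the forward one by adapting Faith's argument (compare \cite[Proposition~3]{Fai1966} and \cite[Corollary~7]{Hui2000}). For ``$\Rightarrow$'', suppose $\mathscr{M}$ is $\Sigma$-injective, so that $\mathscr{M}^{(\mathbb{N})}$ is injective, and suppose towards a contradiction that some $\mathscr{G}\in\mathtt{G}$ carries a strictly ascending chain $\mathscr{P}_{1}\subsetneq\mathscr{P}_{2}\subsetneq\cdots$ of $\mathscr{M}$-annihilator subobjects. Replacing each defining family of morphisms by the saturated set $\mathtt{K}_{n}=\{\mathfrak{f}\colon\mathscr{G}\to\mathscr{M}\mid\mathscr{P}_{n}\subseteq\ker\mathfrak{f}\}$ leaves $\mathscr{P}_{n}=\bigcap_{\mathfrak{f}\in\mathtt{K}_{n}}\ker\mathfrak{f}$ unchanged and forces $\mathtt{K}_{1}\supsetneq\mathtt{K}_{2}\supsetneq\cdots$; choose $\mathfrak{f}_{n}\in\mathtt{K}_{n}\setminus\mathtt{K}_{n+1}$, so that $\mathfrak{f}_{n}$ vanishes on $\mathscr{P}_{n}$ but not on $\mathscr{P}_{n+1}$, and choose $s_{n}\colon\mathscr{U}_{n}\to\mathscr{G}$ factoring through $\mathscr{P}_{n+1}$ with $\mathfrak{f}_{n}s_{n}\neq0$ (possible since generators generate). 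Because $\mathscr{P}_{n+1}\subseteq\mathscr{P}_{m}\subseteq\ker\mathfrak{f}_{m}$ whenever $m>n$, the morphism $(\mathfrak{f}_{k})_{k\in\mathbb{N}}\colon\mathscr{G}\to\mathscr{M}^{\mathbb{N}}$ has the property that its restriction to the image $\mathscr{I}$ of $s=[s_{n}]_{n}\colon\coprod_{n}\mathscr{U}_{n}\to\mathscr{G}$ lands inside $\mathscr{M}^{(\mathbb{N})}\hookrightarrow\mathscr{M}^{\mathbb{N}}$, and so defines a morphism $g\colon\mathscr{I}\to\mathscr{M}^{(\mathbb{N})}$. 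If $\mathscr{M}^{(\mathbb{N})}$ is injective then $g$ extends along $\mathscr{I}\hookrightarrow\mathscr{G}$ to some $\bar{g}\colon\mathscr{G}\to\mathscr{M}^{(\mathbb{N})}$; since $\mathscr{G}$ is finitely presented and $\mathscr{M}^{(\mathbb{N})}$ is the directed union of its finite subcoproducts, $\bar{g}$ factors through $\mathscr{M}^{(F)}$ for some finite $F\subseteq\mathbb{N}$, whence $\mathfrak{f}_{N}$ vanishes on $\mathscr{I}$ for every $N\notin F$ --- contradicting $\mathfrak{f}_{N}s_{N}\neq0$ together with $\im(s_{N})\subseteq\mathscr{I}$.

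For ``$\Leftarrow$'', which is the less routine direction, assume $\mathscr{M}$ is fp-injective and satisfies the chain condition, fix a set $\mathtt{I}$, and let us show $\mathscr{M}^{(\mathtt{I})}$ is injective. By a standard form of Baer's criterion relative to the generating set $\mathtt{G}$ it suffices to extend every morphism $\alpha\colon\mathscr{H}\to\mathscr{M}^{(\mathtt{I})}$ along an inclusion $\mathscr{H}\subseteq\mathscr{G}$ with $\mathscr{G}\in\mathtt{G}$. Write $\mathscr{H}=\bigcup_{\lambda}\mathscr{H}_{\lambda}$ as the directed union of its finitely generated subobjects; in the locally coherent setting of Remark~\ref{locallycoherentremark} each $\mathscr{H}_{\lambda}$ is finitely presented, so $\alpha(\mathscr{H}_{\lambda})$ lies in a finite subcoproduct $\mathscr{M}^{(F_{\lambda})}\cong\mathscr{M}^{n_{\lambda}}$; as $\mathscr{G}/\mathscr{H}_{\lambda}$ is finitely presented and $\mathscr{M}^{n_{\lambda}}$ is fp-injective, $\alpha|_{\mathscr{H}_{\lambda}}$ extends to some $\gamma_{\lambda}\colon\mathscr{G}\to\mathscr{M}^{(\mathtt{I})}$. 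Put $A_{\lambda}=\{\gamma\colon\mathscr{G}\to\mathscr{M}^{(\mathtt{I})}\mid\gamma|_{\mathscr{H}_{\lambda}}=0\}$ and $\mathscr{Q}_{\lambda}=\bigcap_{\gamma\in A_{\lambda}}\ker\gamma\supseteq\mathscr{H}_{\lambda}$; since $\mathscr{G}$ is finitely presented every morphism $\mathscr{G}\to\mathscr{M}^{(\mathtt{I})}$ factors through a finite subcoproduct, so each $\mathscr{Q}_{\lambda}$ is an $\mathscr{M}$-annihilator subobject of $\mathscr{G}$, and $\lambda\le\mu$ gives $A_{\mu}\subseteq A_{\lambda}$, hence $\mathscr{Q}_{\lambda}\subseteq\mathscr{Q}_{\mu}$. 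By the chain condition the directed family $\{\mathscr{Q}_{\lambda}\}$ has a largest member $\mathscr{Q}_{\lambda_{0}}$; as $A_{\lambda}$ equals $\{\gamma\mid\mathscr{Q}_{\lambda}\subseteq\ker\gamma\}$ it is determined by $\mathscr{Q}_{\lambda}$, so $A_{\lambda}=A_{\lambda_{0}}$ whenever $\lambda\ge\lambda_{0}$. For such $\lambda$ the difference $\gamma_{\lambda}-\gamma_{\lambda_{0}}$ vanishes on $\mathscr{H}_{\lambda_{0}}$, hence lies in $A_{\lambda_{0}}=A_{\lambda}$ and so vanishes on $\mathscr{H}_{\lambda}$; therefore $\gamma_{\lambda_{0}}$ restricts to $\alpha$ on every $\mathscr{H}_{\lambda}$ with $\lambda\ge\lambda_{0}$, and, these being cofinal in the directed family, $\gamma_{\lambda_{0}}$ extends $\alpha$.

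The step I expect to be the main obstacle is precisely this last one in the converse: extracting from the ascending chain condition the fact that the extension problem over the arbitrary subobject $\mathscr{H}$ is controlled by finitely many of its finitely generated subobjects --- that is, pinning down the right ascending chain $\{\mathscr{Q}_{\lambda}\}$ of $\mathscr{M}$-annihilator subobjects and the correspondence $\mathscr{Q}_{\lambda}\leftrightarrow A_{\lambda}$ transporting its stabilisation to the family $\{A_{\lambda}\}$ of extension sets. The remaining ingredients --- saturating the $\mathtt{K}_{n}$, the support bookkeeping that places $g$ inside the coproduct, the factorisations of maps out of finitely presented objects through finite subcoproducts, and the precise form of Baer's criterion for a generating set --- are routine but should be checked with care.
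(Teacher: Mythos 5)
The paper does not prove this proposition at all: it is imported verbatim as \cite[Proposition 1.3]{GarDun1994}, so there is no internal argument to compare yours against. Your proof is a correct, self-contained reconstruction along the classical Faith--Harada--Garcia--Dung line, and both directions check out: the forward direction is the standard Faith-style argument (saturate the defining sets $\mathtt{K}_{n}$, pick $\mathfrak{f}_{n}$ killing $\mathscr{P}_{n}$ but not $\mathscr{P}_{n+1}$, and derive a contradiction from a map into $\mathscr{M}^{(\mathbb{N})}$ whose every coordinate is nonzero on the relevant subobject), and the converse correctly reduces Baer's criterion for $\mathscr{M}^{(\mathtt{I})}$ to the stabilisation of the directed family $\mathscr{Q}_{\lambda}$, the key observation being that each $\mathscr{Q}_{\lambda}$ really is an $\mathscr{M}$-annihilator subobject (not merely an $\mathscr{M}^{(\mathtt{I})}$-annihilator subobject) because every $\gamma\colon\mathscr{G}\to\mathscr{M}^{(\mathtt{I})}$ factors through a finite subcoproduct and its kernel splits as a finite intersection of kernels of maps to $\mathscr{M}$. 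Two small remarks. First, your appeal to local coherence is unnecessary and slightly out of place, since the proposition is stated for an arbitrary Grothendieck category with finitely presented generators: you only need $\mathscr{H}_{\lambda}$ to be finitely \emph{generated} (to factor $\alpha|_{\mathscr{H}_{\lambda}}$ through a finite subcoproduct) and $\mathscr{G}/\mathscr{H}_{\lambda}$ to be finitely \emph{presented}, which already follows from $\mathscr{G}$ finitely presented and $\mathscr{H}_{\lambda}$ finitely generated; had local coherence genuinely been needed, your argument would not cover the stated hypotheses. Second, your forward direction only uses injectivity of $\mathscr{M}^{(\mathbb{N})}$ rather than full $\Sigma$-injectivity, which is consistent with (and in effect re-proves part of) the finer equivalences recorded in Theorem \ref{harfai}.
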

\begin{lemma}\emph{\cite[Lemma 1.6]{Kra2000}}\label{fpinjkra} For any $M$ in $\mathcal{T}$ the image $\mathbf{Y}(M)$ is fp-injective.
\end{lemma}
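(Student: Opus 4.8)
The plan is to verify the following standard reformulation of fp-injectivity: it suffices to show that $\mathrm{Ext}^{1}_{\mathbf{Mod}\text{-}\mathcal{T}^{c}}(\mathscr{Q},\mathbf{Y}(M))=0$ for every finitely presented object $\mathscr{Q}$ of $\mathbf{Mod}\text{-}\mathcal{T}^{c}$. Indeed, given an exact sequence $0\to\mathscr{P}\to\mathscr{R}\to\mathscr{Q}\to0$ with $\mathscr{Q}$ finitely presented, the associated long exact sequence shows that the vanishing of $\mathrm{Ext}^{1}(\mathscr{Q},\mathbf{Y}(M))$ is equivalent to surjectivity of $\mathrm{Hom}(\mathscr{R},\mathbf{Y}(M))\to\mathrm{Hom}(\mathscr{P},\mathbf{Y}(M))$, which is precisely the extension property in the definition of fp-injective. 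So fix a finitely presented $\mathscr{Q}$. Since the representables $\mathbf{Y}(X)$ with $X\in\mathcal{T}^{c}$ form a set of finitely presented projective generators of the locally coherent category $\mathbf{Mod}\text{-}\mathcal{T}^{c}$ (Remark~\ref{locallycoherentremark}), a standard argument produces a presentation $\mathbf{Y}(A)\xrightarrow{\mathbf{Y}(a)}\mathbf{Y}(B)\to\mathscr{Q}\to0$ in which $a\colon A\to B$ is a morphism of $\mathcal{T}^{c}$: the functor $\mathscr{Q}$ is a quotient of a single representable because $\mathcal{T}^{c}$ has finite coproducts, the kernel of such an epimorphism is again finitely generated by a Schanuel-type comparison with any chosen finite presentation, and every natural transformation between representables of compact objects is of the form $\mathbf{Y}(a)$ by (restricted) Yoneda.

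Next, since $\mathcal{T}^{c}$ is a triangulated subcategory, complete $a$ to a triangle $A\xrightarrow{a}B\xrightarrow{b}C\xrightarrow{c}\Sigma A$ in $\mathcal{T}^{c}$. For each compact $X$ the functor $\mathcal{T}(X,-)$ is cohomological, so rotating the triangle gives exactness of $\mathcal{T}(X,\Sigma^{-1}C)\to\mathcal{T}(X,A)\to\mathcal{T}(X,B)\to\mathcal{T}(X,C)$; hence $\mathbf{Y}$ carries the rotated triangle to an exact sequence, and in particular
\[
\mathbf{Y}(\Sigma^{-1}C)\longrightarrow\mathbf{Y}(A)\xrightarrow{\mathbf{Y}(a)}\mathbf{Y}(B)\longrightarrow\mathscr{Q}\longrightarrow0
\]
is exact in $\mathbf{Mod}\text{-}\mathcal{T}^{c}$: exactness at $\mathbf{Y}(B)$ holds because $\mathscr{Q}$ is the cokernel of $\mathbf{Y}(a)$ by construction, and exactness at $\mathbf{Y}(A)$ is part of the exactness of $\mathbf{Y}$ applied to the triangle. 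As $\mathbf{Y}(A)$, $\mathbf{Y}(B)$ and $\mathbf{Y}(\Sigma^{-1}C)$ are representable, hence projective, this constitutes the initial part of a projective resolution of $\mathscr{Q}$, which is all that is needed to compute $\mathrm{Ext}^{1}(\mathscr{Q},\mathbf{Y}(M))$ as the homology at the middle term of the complex obtained by applying $\mathrm{Hom}_{\mathbf{Mod}\text{-}\mathcal{T}^{c}}(-,\mathbf{Y}(M))$.

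Applying that functor and using the Yoneda isomorphism $\mathrm{Hom}(\mathbf{Y}(X),\mathbf{Y}(M))\cong\mathbf{Y}(M)(X)=\mathcal{T}(X,M)$, natural in $X\in\mathcal{T}^{c}$, under which $\mathbf{Y}(a)^{\ast}$ corresponds to precomposition with $a$, the relevant complex becomes, up to signs,
\[
\mathcal{T}(B,M)\xrightarrow{-\circ a}\mathcal{T}(A,M)\xrightarrow{-\circ\Sigma^{-1}c}\mathcal{T}(\Sigma^{-1}C,M).
\]
But this is precisely the portion of the long exact sequence obtained by applying the cohomological functor $\mathcal{T}(-,M)$ to the same rotated triangle $\Sigma^{-1}C\to A\xrightarrow{a}B\xrightarrow{b}C$, so it is exact at $\mathcal{T}(A,M)$. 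Hence $\mathrm{Ext}^{1}(\mathscr{Q},\mathbf{Y}(M))=0$, and since $\mathscr{Q}$ was arbitrary, $\mathbf{Y}(M)$ is fp-injective.

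The conceptual content is simply that $\mathbf{Y}$ sends triangles to exact sequences and that representables are projective, so a coherent functor is resolved by a triangle and $\mathrm{Ext}^{1}$ against $\mathbf{Y}(M)$ unwinds, via Yoneda, into the cohomology of that same triangle evaluated at $M$. I do not anticipate a genuine obstacle; the only care required is bookkeeping — keeping the covariant/contravariant variances and the rotation signs consistent so that the complex produced after applying $\mathrm{Hom}(-,\mathbf{Y}(M))$ is really identified with the long exact sequence of the triangle — together with the mild point that the presentation of $\mathscr{Q}$ may be chosen to come from a morphism of $\mathcal{T}^{c}$.
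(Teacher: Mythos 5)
Your argument is correct: resolving a finitely presented functor by a presentation $\mathbf{Y}(A)\to\mathbf{Y}(B)\to\mathscr{Q}\to 0$ coming from a morphism of $\mathcal{T}^{c}$, completing to a triangle, and using that $\mathcal{T}(-,M)$ is cohomological to see that the resulting complex computing $\mathrm{Ext}^{1}(\mathscr{Q},\mathbf{Y}(M))$ is exact is precisely the standard proof of the cited result of Krause. The paper itself gives no proof, quoting \cite[Lemma 1.6]{Kra2000} directly, so your write-up simply reconstructs that source's argument; the only cosmetic imprecision is the claim that vanishing of $\mathrm{Ext}^{1}(\mathscr{Q},\mathbf{Y}(M))$ is ``equivalent'' to surjectivity for a single short exact sequence, whereas only the (correct) implication from vanishing to surjectivity is needed.
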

\begin{corollary}\label{sigmathendccconv}Let $M$ be an object in $\mathcal{T}$ such that for any compact object $X$ of $\mathcal{T}$ we have that each descending chain $Ma_{1}\supseteq Ma_{2}\supseteq \dots$ of pp-definable subgroups of $M$ of sort $X$ must stabilise. Then the image $\mathbf{Y}(M)$ of $M$ in $\mathbf{Mod}\text{-}\mathcal{T}^{c}$ is $\Sigma$-injective.
\end{corollary}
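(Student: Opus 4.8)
The plan is to chain together Lemma~\ref{fpinjkra}, Proposition~\ref{fpinjgd} and Lemma~\ref{annsubmodTdcc}. First I would use Lemma~\ref{fpinjkra} to record that $\mathbf{Y}(M)$ is fp-injective in the category $\mathbf{Mod}\text{-}\mathcal{T}^{c}$, which by Remark~\ref{locallycoherentremark} is a Grothendieck (indeed locally coherent) category possessing the set $\mathtt{G}$ of finitely presented generators. By Proposition~\ref{fpinjgd}, to conclude that $\mathbf{Y}(M)$ is $\Sigma$-injective it therefore suffices to show that, for every $\mathscr{G}\in\mathtt{G}$, each ascending chain of $\mathbf{Y}(M)$-annihilator subobjects of $\mathscr{G}$ stabilises.

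Next I would reduce each such generator to the situation covered by Lemma~\ref{annsubmodTdcc}. Fix $\mathscr{G}\in\mathtt{G}$. Since $\mathscr{G}$ is finitely presented it is in particular finitely generated, so there is a finite family $X_{1},\dots,X_{n}$ of objects of $\mathcal{T}^{c}$ together with an epimorphism $\bigoplus_{k=1}^{n}\mathcal{T}^{c}(-,X_{k})\to\mathscr{G}\to 0$ in $\mathbf{Mod}\text{-}\mathcal{T}^{c}$ (by Yoneda such a map corresponds to a finite generating family of elements of $\mathscr{G}(X_{1}),\dots,\mathscr{G}(X_{n})$). Put $X=\bigoplus_{k=1}^{n}X_{k}$, which is again compact, being a finite coproduct of compact objects. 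Using that $\bigoplus_{k}\mathcal{T}^{c}(-,X_{k})=\mathcal{T}^{c}(-,X)=\mathbf{Y}(X)=\mathcal{T}(-,X)\vert$, we obtain an exact sequence $\mathcal{T}(-,X)\to\mathscr{G}\to 0$ in $\mathbf{Mod}\text{-}\mathcal{T}^{c}$ with $X$ a compact object of $\mathcal{T}$.

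Now suppose, for a contradiction, that some strictly ascending chain $\mathscr{P}_{1}\subsetneq\mathscr{P}_{2}\subsetneq\cdots$ of $\mathbf{Y}(M)$-annihilator subobjects of $\mathscr{G}$ fails to stabilise. Applying Lemma~\ref{annsubmodTdcc} with $\mathscr{Q}=\mathscr{G}$ produces a strictly descending chain of pp-definable subgroups of $M$ of sort $X$, which contradicts the hypothesis on $M$. Hence every ascending chain of $\mathbf{Y}(M)$-annihilator subobjects of $\mathscr{G}$ stabilises, and since $\mathscr{G}\in\mathtt{G}$ was arbitrary, Proposition~\ref{fpinjgd} yields that $\mathbf{Y}(M)$ is $\Sigma$-injective.

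I expect the only point requiring any care — rather than a purely formal invocation of the cited results — to be the construction in the second paragraph of the epimorphism $\mathcal{T}(-,X)\to\mathscr{G}\to 0$ with $X$ compact; this rests on the facts that $\mathcal{T}^{c}$ is closed under finite coproducts (so $X$ is compact) and that $\mathbf{Y}$ carries a finite coproduct of compact objects to the corresponding coproduct of representable functors, together with the standard observation that a finitely generated object of a functor category $\mathbf{Mod}\text{-}\mathcal{T}^{c}$ is a quotient of a finite coproduct of representables. Everything else is a direct chaining of Lemma~\ref{fpinjkra}, Proposition~\ref{fpinjgd} and Lemma~\ref{annsubmodTdcc}.
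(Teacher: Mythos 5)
Your proposal is correct and follows essentially the same route as the paper: fp-injectivity of $\mathbf{Y}(M)$ via Lemma \ref{fpinjkra}, reduction to the annihilator-chain condition via Proposition \ref{fpinjgd}, and conversion of a hypothetical strictly ascending chain into a forbidden strictly descending chain of pp-definable subgroups via Lemma \ref{annsubmodTdcc}, using an epimorphism $\mathcal{T}(-,X)\to\mathscr{G}\to 0$ with $X$ compact. The only cosmetic differences are that the paper argues by contrapositive rather than by contradiction, and obtains the presentation of $\mathscr{G}$ directly from finite presentability rather than spelling out the finite-coproduct-of-representables argument as you do.
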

\begin{proof}We prove the contrapositive, so we assume $\mathbf{Y}(M)$ is not $\Sigma$-injective. Recall, from Remark \ref{locallycoherentremark}, that $\mathbf{Mod}\text{-}\mathcal{T}^{c}$ is locally coherent, and so it is a Grothendieck category with a set $\mathtt{G}$ of finitely presented generators. 

Note $\mathscr{M}=\mathbf{Y}(M)$ is fp-injective by Lemma \ref{fpinjkra}, and combining our initial assumption with Proposition \ref{fpinjgd} shows that, for some $\mathscr{G}\in\mathtt{G}$, there exists a strictly ascending chain of $\mathscr{M}$-annihilator subobjects of $\mathscr{G}$. Since $\mathscr{G}$ is finitely presented, there is an exact sequence of the form $\mathcal{T}(-,Y)\to\mathcal{T}(-,X)\to\mathscr{G}\to 0$ in $\mathbf{Mod}\text{-}\mathcal{T}^{c}$ where $X$ and $Y$ lie in $\mathcal{T}^{c}$. 

By Lemma \ref{annsubmodTdcc} the aforementioned ascending chain strict ascending chain gives rise to a strictly descending chain of pp-definable subgroups of $M$ of sort $X$. 
\end{proof}
\section{$\Sigma$-pure-injective objects and canonical morphisms.}\label{4-1}
\begin{definition}
Recall Notation \ref{categnotation}. Let $\mathtt{I}$ be a set and let $M$ be an object of $\mathcal{T}$. By the universal properties of the product and coproduct of the collection $\mathrm{M}=\{M\mid i\in \mathtt{I}\}$, there exists a unique \textit{summation morphism} $\sigma_{\mathtt{I},\mathrm{M}}:\coprod_{i} M \to M$ and a unique \textit{canonical morphism} $\iota_{\mathtt{I},\mathrm{M}}:\coprod_{i} M\to \prod_{i} M$ for which $\sigma_{\mathtt{I},\mathrm{M}}u_{i,\mathrm{M}}=1_{M}$ and $\iota_{\mathtt{I},\mathrm{M}}u_{i,\mathrm{M}}=v_{i,\mathrm{M}}$ for each $i$.
\end{definition}
\begin{proposition}\label{sigmapimapsdef} Let $M$ be an object of $\mathcal{T}$ and let $\mathtt{I}$ be a set. Then the canonical morphism  $\iota_{\mathtt{I},\mathrm{M}}$ is a pure monomorphism.
\end{proposition}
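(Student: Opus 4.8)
The plan is to show that $\iota_{\mathtt{I},\mathrm{M}}$ is a pure monomorphism by checking, directly from the definition, that $\mathbf{Y}(\iota_{\mathtt{I},\mathrm{M}})_{X}\colon\mathcal{T}(X,\coprod_{i}M)\to\mathcal{T}(X,\prod_{i}M)$ is injective for every compact object $X$. Fix such an $X$. The key observation is that, since $X$ is compact, the canonical map $\gamma_{X,\mathrm{M}}\colon\coprod_{i}\mathcal{T}(X,M)\to\mathcal{T}(X,\coprod_{i}M)$ is an isomorphism (Definition \ref{compactob}), and the map $\lambda_{X,\mathrm{M}}\colon\mathcal{T}(X,\prod_{i}M)\to\prod_{i}\mathcal{T}(X,M)$ is always an isomorphism (Notation \ref{categnotation}). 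So it suffices to analyse the composite $\lambda_{X,\mathrm{M}}\circ\mathbf{Y}(\iota_{\mathtt{I},\mathrm{M}})_{X}\circ\gamma_{X,\mathrm{M}}\colon\coprod_{i}\mathcal{T}(X,M)\to\prod_{i}\mathcal{T}(X,M)$ and show it is injective.

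The main step is a diagram-chase identifying this composite with the obvious inclusion $\coprod_{i}\mathcal{T}(X,M)\hookrightarrow\prod_{i}\mathcal{T}(X,M)$ of the subgroup of finitely-supported tuples. First I would unwind the definitions from Notation \ref{categnotation}: for a finitely-supported tuple $(g_{i}\mid i\in\mathtt{I})$, we have $\gamma_{X,\mathrm{M}}(g_{i}\mid i)=\sum_{i}u_{i,\mathrm{M}}g_{i}$, and then $\mathbf{Y}(\iota_{\mathtt{I},\mathrm{M}})_{X}$ sends this to $\iota_{\mathtt{I},\mathrm{M}}\sum_{i}u_{i,\mathrm{M}}g_{i}=\sum_{i}\iota_{\mathtt{I},\mathrm{M}}u_{i,\mathrm{M}}g_{i}=\sum_{i}v_{i,\mathrm{M}}g_{i}$, using the defining property $\iota_{\mathtt{I},\mathrm{M}}u_{i,\mathrm{M}}=v_{i,\mathrm{M}}$. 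Finally, applying $\lambda_{X,\mathrm{M}}$ means postcomposing with each projection $p_{j,\mathrm{M}}$, and since $p_{j,\mathrm{M}}v_{i,\mathrm{M}}=\delta_{ij}1_{M}$ (again from Notation \ref{categnotation}), we obtain $p_{j,\mathrm{M}}\sum_{i}v_{i,\mathrm{M}}g_{i}=g_{j}$. Thus the composite is literally the tautological inclusion, which is injective, and hence so is $\mathbf{Y}(\iota_{\mathtt{I},\mathrm{M}})_{X}$.

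The only genuine obstacle is bookkeeping: the summation $\sum_{i}u_{i,\mathrm{M}}g_{i}$ is finite because $(g_{i}\mid i)$ lies in the coproduct, so these manipulations are legitimate, but one must be careful that the identification of $\coprod_{i}\mathcal{T}(X,M)$ with finitely-supported tuples inside $\prod_{i}\mathcal{T}(X,M)$ is exactly the one fixed in Notation \ref{categnotation}, so that $\gamma_{X,\mathrm{M}}$ and $\lambda_{X,\mathrm{M}}$ are the maps from the universal properties as stated. Once that is in place, the argument is a short computation; it could also be phrased as the commutativity of a square whose two vertical maps are the isomorphisms $\gamma_{X,\mathrm{M}}$ and $\lambda_{X,\mathrm{M}}$ and whose bottom map is the inclusion of the coproduct into the product in $\mathbf{Ab}$. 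Since $X$ was an arbitrary compact object, $\iota_{\mathtt{I},\mathrm{M}}$ is a pure monomorphism.
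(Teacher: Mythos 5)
Your proof is correct and follows essentially the same route as the paper's: both identify the composite $\lambda_{X,\mathrm{M}}\circ\mathcal{T}(X,\iota_{\mathtt{I},\mathrm{M}})\circ\gamma_{X,\mathrm{M}}$ with the canonical (injective) inclusion $\coprod_{i}\mathcal{T}(X,M)\to\prod_{i}\mathcal{T}(X,M)$ in $\mathbf{Ab}$ and use compactness of $X$ to conclude that $\gamma_{X,\mathrm{M}}$ is an isomorphism. Your version merely spells out the diagram chase that the paper leaves implicit.
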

\begin{proof}Let $X$ be a object in $\mathcal{T}$ which is compact. In general: the morphism $\lambda_{X,\mathrm{M}}$ is an isomorphism; the canonical morphism $\iota_{\mathtt{I},\mathcal{T}(X,\mathrm{M})}$ is injective; and $\iota_{\mathtt{I},\mathcal{T}(X,\mathrm{M})}$ is the composition $\lambda_{X,\mathrm{M}}\mathcal{T}(X,\iota_{\mathtt{I},\mathrm{M}})\gamma_{X,\mathrm{M}}$. 

Since $X$ is compact the morphism $\gamma _{X,\mathrm{M}}$ is an isomorphism. This shows $\mathcal{T}(X,\iota_{\mathtt{I},\mathrm{M}})$ is injective if $X$ is compact, and so $\iota_{\mathtt{I},\mathrm{M}}$ is a pure monomorphism.
\end{proof}

\begin{definition}\cite[Definition 1.1]{Kra2000} An object $M$ of $\mathcal{T}$ is called \textit{pure}-\textit{injective} if each pure monomorphism $M\to N$ is a section, and $M$ is called $\Sigma$-\textit{pure}-\textit{injective} if, for any set $\mathtt{I}$, the coproduct $\coprod_{i\in \mathtt{I}}M=M^{(\mathtt{I})}$ is pure-injective.
\end{definition}
At this point it is worth recalling some characterisations of purity due to Krause. Theorem \ref{krachar} is analogous to \cite[Theorem 7.1 (ii,v,vi)]{JenLen1989}.
\begin{theorem}\label{krachar2}\label{krachar}
\emph{\cite[Theorem 1.8, (1,3,5)]{Kra2002}} For an object $M$ of $\mathcal{T}$ the following statements are equivalent.
\begin{enumerate}
\item The object $M$ of $\mathcal{T}$ is pure-injective.
\item The object $\mathbf{Y}(M)$ of $\mathbf{Mod}\text{-}\mathcal{T}^{c}$ is injective. 
\item For any set $\mathtt{I}$ the morphism $\sigma_{\mathtt{I},\mathrm{M}}$ factors through the morphism $\iota_{\mathtt{I},\mathrm{M}}$.
\end{enumerate}
\end{theorem}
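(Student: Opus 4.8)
The plan is to run the cycle $(1)\Rightarrow(3)\Rightarrow(2)\Rightarrow(1)$, the last implication being the substantive one (the whole statement is due to Krause). For $(1)\Rightarrow(3)$: by Proposition \ref{sigmapimapsdef} the canonical morphism $\iota_{\mathtt{I},\mathrm{M}}$ is a pure monomorphism, and a pure-injective object is by definition injective relative to pure monomorphisms, so the summation morphism $\sigma_{\mathtt{I},\mathrm{M}}\colon M^{(\mathtt{I})}\to M$ — which has the same source as $\iota_{\mathtt{I},\mathrm{M}}$ — extends along $\iota_{\mathtt{I},\mathrm{M}}$, and that extension is precisely the factorisation asserted in (3).

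For $(3)\Rightarrow(2)$, first I would note that the restricted Yoneda functor preserves all small products (since each $\mathcal{T}(X,-)$ does) and all small coproducts (since each sort $X$ in $\mathcal{T}^{c}$ is compact), products and coproducts in $\mathbf{Mod}\text{-}\mathcal{T}^{c}$ being computed sortwise; hence $\mathbf{Y}$ carries $\sigma_{\mathtt{I},\mathrm{M}}$ and $\iota_{\mathtt{I},\mathrm{M}}$ to the summation and canonical morphisms of $\mathbf{Y}(M)$ in $\mathbf{Mod}\text{-}\mathcal{T}^{c}$. Applying $\mathbf{Y}$ to the factorisation in (3) thus shows that the summation morphism of $\mathbf{Y}(M)$ factors through its canonical morphism for every $\mathtt{I}$; by the analogue of \cite[Theorem 7.1]{JenLen1989} inside the locally coherent Grothendieck category $\mathbf{Mod}\text{-}\mathcal{T}^{c}$ this makes $\mathbf{Y}(M)$ a pure-injective object of $\mathbf{Mod}\text{-}\mathcal{T}^{c}$. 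Since $\mathbf{Y}(M)$ is also fp-injective (Lemma \ref{fpinjkra}), the inclusion of $\mathbf{Y}(M)$ into its injective envelope $\mathscr{E}$ is a pure monomorphism — its cokernel has vanishing $\mathrm{Ext}^{1}$ against finitely presented objects exactly because $\mathbf{Y}(M)$ is fp-injective — and applying pure-injectivity of $\mathbf{Y}(M)$ to $1_{\mathbf{Y}(M)}$ along this pure monomorphism splits it, so $\mathbf{Y}(M)$ is a direct summand of $\mathscr{E}$, hence injective.

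The implication $(2)\Rightarrow(1)$ is where I expect the main obstacle. Given a pure monomorphism $f\colon M\to N$, each component $\mathbf{Y}(f)_{X}$ is injective by definition, so $\mathbf{Y}(f)$ is a monomorphism in $\mathbf{Mod}\text{-}\mathcal{T}^{c}$ and, $\mathbf{Y}(M)$ being injective, it splits; write $r\colon\mathbf{Y}(N)\to\mathbf{Y}(M)$ for a retraction. To finish one must lift $r$ to a morphism $\tilde r\colon N\to M$ in $\mathcal{T}$ with $\mathbf{Y}(\tilde r)=r$, and then deduce $\tilde r f=1_{M}$ from $\mathbf{Y}(\tilde r f)=\mathbf{Y}(1_{M})$. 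The difficulty is that $\mathbf{Y}$ is in general neither full nor faithful — its kernel consists precisely of the phantom morphisms — so neither step is automatic. The resolution is that the failure of fullness and the failure of faithfulness on morphisms with a fixed target are both governed by $\mathrm{Ext}^{1}$-groups in $\mathbf{Mod}\text{-}\mathcal{T}^{c}$ computed against $\mathbf{Y}(M)$, so injectivity of $\mathbf{Y}(M)$ forces them to vanish. I would therefore isolate as the key lemma the statement that, when $\mathbf{Y}(M)$ is injective, $\mathbf{Y}$ is full and faithful on the morphisms with target $M$ — equivalently, there are no nonzero phantom morphisms into such an $M$ — prove it following \cite{Kra2002}, and then the existence of $\tilde r$ and the identity $\tilde r f=1_{M}$ are formal, so $f$ is a section and $M$ is pure-injective.
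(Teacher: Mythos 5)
First, a point of reference: the paper does not prove this theorem at all --- it is quoted verbatim as \cite[Theorem 1.8]{Kra2002} --- so there is no internal proof to compare your argument against. Judged on its own terms, your outline has the right architecture (it matches how Krause actually argues, with all the weight carried by the functor category $\mathbf{Mod}\text{-}\mathcal{T}^{c}$ and by the behaviour of $\mathbf{Y}$ on morphisms into objects with injective image), but as written it contains a circularity and leaves the crux unproven.

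The circularity is in $(1)\Rightarrow(3)$. You invoke ``a pure-injective object is by definition injective relative to pure monomorphisms'', but the definition in force, both in this paper and in \cite{Kra2002}, is only that every pure monomorphism \emph{out of} $M$ is a section. The map you want to extend along, $\iota_{\mathtt{I},\mathrm{M}}\colon M^{(\mathtt{I})}\to M^{\mathtt{I}}$, has source $M^{(\mathtt{I})}$ rather than $M$, so the definition gives you nothing directly; and the equivalence of the section property with the extension property is itself one of the clauses of Krause's Theorem 1.8. In a module category one passes between the two via a pushout, but $\mathcal{T}$ is triangulated and has no pushouts, so this equivalence is genuinely nontrivial here and is normally \emph{deduced from} condition (2) rather than available before it; a non-circular cycle would be $(1)\Rightarrow(2)\Rightarrow(3)\Rightarrow(2)\Rightarrow(1)$, or better, prove $(2)$ equivalent to the extension property first and derive $(1)$ and $(3)$ from that. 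Second, in $(2)\Rightarrow(1)$ you correctly isolate the key lemma --- that when $\mathbf{Y}(M)$ is injective the restricted Yoneda functor is full, and faithful up to phantoms, on morphisms with target $M$ (compare Corollary \ref{kraequivpi}) --- but you do not prove it, you only defer to \cite{Kra2002}; and your $(3)\Rightarrow(2)$ likewise rests on an unproven ``analogue of \cite[Theorem 7.1]{JenLen1989}'' inside $\mathbf{Mod}\text{-}\mathcal{T}^{c}$. Since essentially all the content of the theorem lives in these two deferred statements, what you have is an accurate road map of Krause's proof rather than a proof.
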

Proposition \ref{sigmapimapsdef2} is analogous to parts (i) and (ii) in \cite[Theorem 8.1]{JenLen1989}.
\begin{proposition}\label{sigmapimapsdef2} An object $M$ of $\mathcal{T}$ is $\Sigma$-pure-injective if and only if, for each set $\mathtt{I}$, the canonical morphism $\iota_{\mathtt{I},\mathrm{M}}$ is a section. 
\end{proposition}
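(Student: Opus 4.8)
The plan is to prove the two implications separately, using Proposition \ref{sigmapimapsdef} (that each $\iota_{\mathtt{I},\mathrm{M}}$ is a pure monomorphism) together with the equivalence of conditions (1) and (3) in Theorem \ref{krachar}.

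For the ``only if'' direction, suppose $M$ is $\Sigma$-pure-injective and fix a set $\mathtt{I}$. Then $M^{(\mathtt{I})}=\coprod_{i\in\mathtt{I}}M$ is pure-injective by definition. By Proposition \ref{sigmapimapsdef} the canonical morphism $\iota_{\mathtt{I},\mathrm{M}}\colon M^{(\mathtt{I})}\to\prod_{i\in\mathtt{I}}M$ is a pure monomorphism, and since its source $M^{(\mathtt{I})}$ is pure-injective, this pure monomorphism must be a section. This settles one direction with no further work.

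For the converse, suppose $\iota_{\mathtt{J},\mathrm{M}}$ is a section for every set $\mathtt{J}$, and fix a set $\mathtt{I}$; the goal is to show $M^{(\mathtt{I})}$ is pure-injective. By the equivalence of (1) and (3) in Theorem \ref{krachar}, it suffices to show that for every set $\mathtt{K}$ the summation morphism $\sigma_{\mathtt{K},\mathrm{M}^{(\mathtt{I})}}$ factors through $\iota_{\mathtt{K},\mathrm{M}^{(\mathtt{I})}}$; and for this it is enough to prove that $\iota_{\mathtt{K},\mathrm{M}^{(\mathtt{I})}}$ is itself a section, since then every morphism out of $\coprod_{\mathtt{K}}M^{(\mathtt{I})}$ — in particular $\sigma_{\mathtt{K},\mathrm{M}^{(\mathtt{I})}}$ — factors through it. The key point is the identity
\[
\iota_{\mathtt{I}\times\mathtt{K},\mathrm{M}}=\Big(\textstyle\prod_{\mathtt{K}}\iota_{\mathtt{I},\mathrm{M}}\Big)\circ\iota_{\mathtt{K},\mathrm{M}^{(\mathtt{I})}},
\]
valid under the natural identifications $\coprod_{\mathtt{K}}M^{(\mathtt{I})}\cong M^{(\mathtt{I}\times\mathtt{K})}$ and $\prod_{\mathtt{K}}\prod_{\mathtt{I}}M\cong\prod_{\mathtt{I}\times\mathtt{K}}M$, where $\prod_{\mathtt{K}}\iota_{\mathtt{I},\mathrm{M}}\colon\prod_{\mathtt{K}}M^{(\mathtt{I})}\to\prod_{\mathtt{K}}\prod_{\mathtt{I}}M$ denotes the product of $\mathtt{K}$ copies of $\iota_{\mathtt{I},\mathrm{M}}$. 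Granting this, a left inverse $r$ of $\iota_{\mathtt{I}\times\mathtt{K},\mathrm{M}}$ — which exists by the hypothesis applied to the set $\mathtt{J}=\mathtt{I}\times\mathtt{K}$ — produces $r\circ\prod_{\mathtt{K}}\iota_{\mathtt{I},\mathrm{M}}$ as a left inverse of $\iota_{\mathtt{K},\mathrm{M}^{(\mathtt{I})}}$, so the latter is a section, as required. Since $\mathtt{I}$ was arbitrary, $M$ is $\Sigma$-pure-injective.

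The main obstacle is the verification of the displayed identity: it is conceptually transparent but requires careful bookkeeping, precomposing both sides with the coproduct inclusions $u_{(i,k)}\colon M\to M^{(\mathtt{I}\times\mathtt{K})}$ and tracking how the coordinate morphisms $v_{j,\mathrm{B}}$ and $q_{j,\mathrm{B}}$ from Notation \ref{categnotation} compose across the two levels of indexing by $\mathtt{I}$, $\mathtt{K}$ and $\mathtt{I}\times\mathtt{K}$. Everything else in the argument is a formal manipulation with sections and the already-cited characterisations.
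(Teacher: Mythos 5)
Your proof is correct and follows essentially the same route as the paper: the forward direction is identical, and in the converse your morphism $\prod_{\mathtt{K}}\iota_{\mathtt{I},\mathrm{M}}$ is exactly the map $\Xi$ the paper builds, with the retraction of $\iota_{\mathtt{I}\times\mathtt{K},\mathrm{M}}$ supplying the factorisation required by Theorem \ref{krachar}(1)$\Leftrightarrow$(3). The only cosmetic difference is that you package the conclusion as ``$\iota_{\mathtt{K},\mathrm{M}^{(\mathtt{I})}}$ is itself a section'' rather than writing down the factorisation of $\sigma_{\mathtt{K},\mathrm{M}^{(\mathtt{I})}}$ directly.
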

\begin{proof}Assume that $M$ is $\Sigma$-pure-injective and that $\mathtt{I}$ is a set. By assumption the domain of  $\iota_{\mathtt{I},\mathrm{M}}$ is pure-injective. Since $\iota_{\mathtt{I},\mathrm{M}}$ is a pure monomorphism, this means it is a section. Supposing conversley that $\iota_{\mathtt{I},\mathrm{M}}$ is a section for each set $\mathtt{I}$, it remains to show that $M$ is $\Sigma$-pure-injective. Choose a set $\mathtt{T}$ and let $N=\coprod_{t\in \mathtt{T}}M$. It suffices to prove $N$ is pure-injective. Let $\mathtt{S}$ be any set and consider the collection $\mathrm{N}=\{N\mid s\in \mathtt{S}\}$. By Theorem \ref{krachar} it suffices to find a map $\theta_{\mathtt{S},\mathrm{N}}\colon\prod_{s}N\to N$ such that $\sigma_{\mathtt{S},\mathrm{N}}=\theta_{\mathtt{S},\mathrm{N}}\iota_{\mathtt{S},\mathrm{N}}$. Let $\mathrm{M}=\{M\mid t\in \mathtt{T}\}$. 

For each $(s,t)\in \mathtt{S}\times \mathtt{T}$ the morphisms $u_{s,\mathrm{N}}u_{t,\mathrm{M}}$ satisfy the universal property of the coproduct $\coprod_{s,t}M$, and so we assume $u_{s,t,\mathrm{M}}=u_{s,\mathrm{N}}u_{t,\mathrm{M}}$ without loss of generality. Consider the morphisms $\varphi_{s,t,\mathrm{M}}=q_{t,\mathrm{M}}p_{s,\mathrm{N}}$ for each $(s,t)\in \mathtt{S}\times \mathtt{T}$. Since $u_{s,t,\mathrm{M}}=u_{s,\mathrm{N}}u_{t,\mathrm{M}}$ we have $q_{s,t,\mathrm{M}}=q_{t,\mathrm{M}}q_{s,\mathrm{N}}$ by uniqueness. Consequently $\varphi_{s,t,\mathrm{M}}v_{s,\mathrm{N}}q_{s,\mathrm{N}}u_{s,t,\mathrm{M}}$ is the identity on $M$. By the universal property of the product, there is a morphism $ \Xi\colon\prod_{s}N\to \prod_{s,t}M$ such that $p_{s,t,\mathrm{M}} \Xi=\varphi_{s,t,\mathrm{M}}$ for each $(s,t)$. 
It suffices to let $\theta_{\mathtt{S},\mathrm{N}}=\sigma_{\mathtt{S},\mathrm{N}}\pi_{\mathtt{S}\times \mathtt{T},\mathrm{M}}\Xi$. By the uniqueness of the involved morphisms, it is straightforward to see that $\sigma_{\mathtt{S},\mathrm{N}}=\theta_{\mathtt{S},\mathrm{N}}\iota_{\mathtt{S},\mathrm{N}}$.

\end{proof}
Lemma \ref{sigmathendcc} is analogous to \cite[Theorem 8.1(ii,iii)]{JenLen1989}.
\begin{lemma}\label{sigmathendcc}If $G\in\mathcal{S}$ and $M$ is a $\Sigma$-pure-injective object in $\mathcal{T}$ then every descending chain of pp-definable subgroups of $M$ of sort $G$ stabilises. 
\end{lemma}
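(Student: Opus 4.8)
The plan is to argue by contradiction, applying Lemma \ref{zimmtech2} to a presentation of $\prod_{i\in\mathbb{N}}M$ as a coproduct whose summands are copies of $M$ (together with one further summand). Suppose some descending chain of pp-definable subgroups of $M$ of sort $G$ does not stabilise; passing to a subchain we may assume it is strictly descending, $\varphi_{1}(M)\supsetneq\varphi_{2}(M)\supsetneq\dots$, and by \cite[Proposition 3.1]{GarPre2005} we may take each $\varphi_{n}(v_{G})$ to be a divisibility formula.

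First I would put ourselves in the setting of Notation \ref{techynotation} with $M_{i}=M$ for all $i\in\mathbb{N}$. Since $M$ is $\Sigma$-pure-injective, Proposition \ref{sigmapimapsdef2} says the canonical morphism $\iota_{\mathbb{N},\mathrm{M}}\colon\coprod_{i\in\mathbb{N}}M\to\prod_{i\in\mathbb{N}}M$ is a section; as $\mathcal{T}$ has countable coproducts its idempotents split, so $\iota_{\mathbb{N},\mathrm{M}}$ identifies $\coprod_{i\in\mathbb{N}}M$ with a direct summand and $\prod_{i\in\mathbb{N}}M$ becomes a coproduct $\coprod_{j\in\mathtt{J}}L_{j}$, where $\mathtt{J}=\mathbb{N}\sqcup\{\ast\}$, $L_{k}=M$ for $k\in\mathbb{N}$ with coproduct inclusion $u_{k,\mathrm{L}}=\iota_{\mathbb{N},\mathrm{M}}u_{k,\mathrm{M}}=v_{k,\mathrm{M}}$, and $L_{\ast}$ is the complementary summand. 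By Lemma \ref{ppprops2} the isomorphism $\lambda_{G,\mathrm{M}}$ restricts to isomorphisms $\varphi_{n}(\prod_{i}M)\to\prod_{i}\varphi_{n}(M_{i})$ for every $n$, whence $\varphi_{1}(\prod_{i}M)\supseteq\varphi_{2}(\prod_{i}M)\supseteq\dots$ is a descending chain of pp-definable subgroups of $\prod_{i}M$ of sort $G$. Lemma \ref{zimmtech2}, applied to this chain, yields $r\in\mathbb{N}$ and a finite $\mathtt{J}'\subseteq\mathtt{J}$ such that $\mathrm{im}(\rho_{r,j})\subseteq\varphi_{n}(L_{j})$ for all $n\geq r$ and all $j\notin\mathtt{J}'$.

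To contradict this I would choose $k\in\mathbb{N}$ with $k\notin\mathtt{J}'$ and $k\geq r$ (possible since $\mathtt{J}'$ is finite), pick $y\in\varphi_{r}(M)\setminus\varphi_{r+1}(M)$, and take $\underline{x}\in\prod_{i\geq r}\varphi_{r}(M_{i})$ to be the tuple with $k$-th entry $y$ and all other entries $0$. Unwinding $\rho_{r,k}=q_{k,\Psi(r)}\kappa\langle\varphi_{r}\rangle u_{\geq,\Pi(r)}$ via Corollary \ref{zimmtech}: the element $u_{\geq,\Pi(r)}(\underline{x})$ of $\prod_{i}\varphi_{r}(M_{i})$ has $y$ in position $k$ and $0$ elsewhere; $\kappa\langle\varphi_{r}\rangle$ sends it to $(q_{j,\mathrm{L}}f\mid j\in\mathtt{J})$ where $f\colon G\to\prod_{i}M$ satisfies $p_{k,\mathrm{M}}f=y$ and $p_{i,\mathrm{M}}f=0$ for $i\neq k$, i.e. $f=v_{k,\mathrm{M}}y$; and $q_{k,\Psi(r)}$ extracts the $k$-th coordinate, so $\rho_{r,k}(\underline{x})=q_{k,\mathrm{L}}f=q_{k,\mathrm{L}}v_{k,\mathrm{M}}y=q_{k,\mathrm{L}}u_{k,\mathrm{L}}y=y$. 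As $y\notin\varphi_{r+1}(M)=\varphi_{r+1}(L_{k})$, this contradicts $\mathrm{im}(\rho_{r,k})\subseteq\varphi_{r+1}(L_{k})$, and the lemma follows.

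The step I expect to be the main obstacle is the second paragraph: one must verify carefully that the split monomorphism $\iota_{\mathbb{N},\mathrm{M}}$ really does exhibit $\prod_{i\in\mathbb{N}}M$ as a coproduct of the exact shape demanded by Notation \ref{techynotation}, with the coproduct inclusions of the copies of $M$ being the product sections $v_{k,\mathrm{M}}$, so that the explicit evaluation of $\rho_{r,k}$ in the last step is valid. Granting this, the remaining steps are direct applications of Lemma \ref{ppprops2}, Corollary \ref{zimmtech} and Lemma \ref{zimmtech2}.
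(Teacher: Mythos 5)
Your argument is correct, but it takes a genuinely different route from the paper. The paper proves this lemma directly, in the style of the classical module-theoretic argument: it picks witnesses $b_{n}a_{n}\notin Ma_{n+1}$, assembles them into an element $\underline{ba}$ of $\prod_{n}\mathcal{T}(G,M)$, pushes it through the retraction of $\iota_{\mathbb{N},\mathrm{M}}$ (conjugated by $\lambda^{-1}_{G,\mathrm{M}}$ and $\gamma^{-1}_{G,\mathrm{M}}$), and shows by splitting $\underline{ba}=\underline{ba}_{\leq l}+\underline{ba}_{>l}$ that every coordinate of the image is nonzero, contradicting finite support in the coproduct; Lemma \ref{zimmtech2} is never invoked there. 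You instead use the section $\iota_{\mathbb{N},\mathrm{M}}$ to realise $\prod_{\mathbb{N}}M$ as a coproduct $\coprod_{j\in\mathbb{N}\sqcup\{\ast\}}L_{j}$ with $L_{k}=M$ and $u_{k,\mathrm{L}}=v_{k,\mathrm{M}}$, feed that decomposition into Lemma \ref{zimmtech2}, and contradict its conclusion with a single element supported in one coordinate $k\geq r$, $k\notin\mathtt{J}'$. The step you flag as the obstacle is indeed the only point needing care, and it goes through: a split monomorphism in a triangulated category admits a complement (split the triangle on $\iota_{\mathbb{N},\mathrm{M}}$, or invoke idempotent splitting, available since $\mathcal{T}$ has countable coproducts), the resulting inclusions are $\iota_{\mathbb{N},\mathrm{M}}u_{k,\mathrm{M}}=v_{k,\mathrm{M}}$ with retractions $q_{k,\mathrm{M}}\pi_{\mathbb{N},\mathrm{M}}$, and the transfer of the descending chain to $\prod_{\mathbb{N}}M$ via $\lambda_{G,\mathrm{M}}$ is exactly Lemma \ref{ppprops2}; your evaluation $\rho_{r,k}(\underline{x})=q_{k,\mathrm{L}}v_{k,\mathrm{M}}y=y$ is then valid. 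What your approach buys is economy — Lemma \ref{zimmtech2} does double duty, serving both here and in Lemma \ref{zimmtech3} — at the cost of importing the splitting of the product and the full strength of that technical lemma; the paper's direct proof is more self-contained and avoids any idempotent/cone splitting, which is why it is the one that most closely mirrors \cite[Theorem 8.1(ii,iii)]{JenLen1989}.
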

\begin{proof}For a contradiction we assume the existence of a strictly descending chain $Ma_{0}\supsetneq Ma_{1}\supsetneq Ma_{2}\supsetneq \dots$ of (pp-definable subgroups of $M$ of sort $G$) for some $G\in\mathcal{S}$. Hence there is a collection of compact objects $H_{n}\in\mathcal{S}$ such that $a_{n}\in\mathcal{T}(G,H_{n})$ for each $n\in\mathbb{N}$. By our assumption we may choose elements $b_{n}\in\mathcal{T}(H_{n},M)$ such that $b_{n}a_{n}\notin Ma_{n+1}$. By Proposition \ref{sigmapimapsdef2} the canonical morphism $\iota_{\mathbb{N},\mathrm{M}}:\coprod_{\mathbb{N}}M\to\prod_{\mathbb{N}}M$ is a section, and so there is some morphism $\pi_{\mathbb{N},\mathrm{M}}\colon\prod_{\mathbb{N}}M\to\coprod_{\mathbb{N}}M$ such that $\pi_{\mathbb{N},\mathrm{M}}\iota_{\mathbb{N},\mathrm{M}}$ is the identity on $\coprod_{\mathbb{N}}M$. 
After applying the functor $\mathcal{T}(G,-)$ (from $\mathcal{T}$ to the category of abelian groups) this means $\mathcal{T}(G,\pi_{\mathbb{N},\mathrm{M}})\mathcal{T}(G,\iota_{\mathbb{N},\mathrm{M}})$ is the identity on $\mathcal{T}(G,\coprod_{\mathbb{N}}M)$. Let $\underline{ba}=(b_{n}a_{n}\mid n\in \mathbb{N})\in\prod_{n}\mathcal{T}(G,M)$.  Fix $n\in\mathbb{N}$ and let $\varphi_{n}(v_{G})$ be the formula $(\exists u_{H_{n}} \colon v_{G}=u_{H_{n}}a_{n})$. Let $\mathrm{M}=\{M\mid n\in\mathbb{N}\}$. Recall $\lambda_{G,\mathrm{M}}$ is always an isomorphism, and since $G$ is compact, $\gamma_{G,\mathrm{M}}$  is an isomorphism. Define $\mu$ by
\[\begin{array}{c}
\mu=(\gamma_{G,\mathrm{M}})^{-1}\mathcal{T}(G,\pi_{\mathbb{N},\mathrm{M}})(\lambda_{G,\mathrm{M}})^{-1}\colon\prod_{n\in\mathbb{N}}\mathcal{T}(G,M)\to\coprod_{n\in\mathbb{N}}\mathcal{T}(G,M).
\end{array}
\]
Let $\mu(\underline{ba})=(c_{n}\mid n\in\mathbb{N})$. The contradiction we will find is that $c_{l}\neq 0$ for all $l\in\mathbb{N}$, which contradicts that $\mu$ has codomain $\coprod_{n\in\mathbb{N}}\mathcal{T}(G,M)$. Fix $l\in\mathbb{N}$. Now let
\[
\underline{ba}_{\leq l}=(b_{0}a_{0},\dots,b_{l}a_{l},0,0,\dots)\text{ and }\underline{ba}_{>l}=(0,\dots,0,b_{l+1}a_{l+1},b_{l+2}a_{l+2},\dots ),
\] where the first $l$ entries of $\underline{ba}_{>l}$ are $0$. 

Note that $\underline{ba}_{\leq l}\in \coprod_{n}\mathcal{T}(G,M)$. Furthermore, since the chain $Ma_{0}\supseteq Ma_{1}\supseteq \cdots$ is descending, we have $b_{n}a_{n}\in\varphi_{l}(M)$ for all $n>l$ and so $\underline{ba}_{>l}\in\prod_{n}\varphi_{l+1}(M)$. By Lemma \ref{ppprops}(ii), the restrictions of $(\lambda_{G,\mathrm{M}})^{-1}$ and $(\gamma_{G,\mathrm{M}})^{-1}$ respectively define isomorphisms $\prod_{n}\varphi_{l+1}(M)\to \varphi_{l+1}(\prod_{n}M)$ and $\varphi_{l+1}(\coprod_{n}M)\to \coprod_{n}\varphi_{l+1}(M)$. Similarly $\mathcal{T}(G,\pi_{\mathbb{N},\mathrm{M}})$ restricts to define a morphism $\varphi_{l+1}(\prod_{n}M)\to\varphi_{l+1}(\coprod_{n}M)$. Altogether we have that $ \mu$ restricts to a morphism $\prod_{n}\varphi_{l+1}(M)\to\coprod_{n}\varphi_{l+1}(M)$. Let $\mu(\underline{ba}_{>l})=(d_{n}\mid n\in\mathbb{N})$, and so $d_{n}\in\varphi_{l+1}(M)$ for all $n$. 

Recall it suffices to show $c_{l}\neq0$ where $\mu(\underline{ba})=(c_{n}\mid n\in\mathbb{N})$. From the above,
\[
\begin{array}{c}(c_{0},\dots,c_{l},c_{l+1},\dots)=\mu (\underline{ba})=\mu(\underline{ba}_{\leq l} + \underline{ba}_{>l})=\underline{ba}_{\leq l} + \mu(\underline{ba}_{>l})\\=(b_{0}a_{0}+d_{0},\dots,b_{l}a_{l}+d_{l},b_{l+1}a_{l+1}+d_{l+1},\dots),
\end{array}
\]
and so $c_{l}\neq0$ as otherwise $\varphi_{l+1}(M)\ni -d_{l}=b_{l}a_{l}\notin \varphi_{l+1}(M)$. 
\end{proof}

We now recall a result of Krause which is used heavily in the sequel.
\begin{corollary}\emph{\cite[Corollary 1.10]{Kra2000}}\label{kraequivpi} The functor $\mathbf{Y}$ gives an equivalence between the full subcategory of $\mathcal{T}$ consisting of pure-injective objects and the full subcategory of  $\mathbf{Mod}\text{-}\mathcal{T}^{c}$ consitsting of injective objects. 
\end{corollary}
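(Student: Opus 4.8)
The plan is to establish three things about the restricted Yoneda functor $\mathbf{Y}\colon\mathcal{T}\to\mathbf{Mod}\text{-}\mathcal{T}^{c}$: that it carries pure-injective objects to injective objects; that it is full and faithful on pure-injectives; and that its essential image on pure-injectives is exactly the class of injective objects. The first point is immediate from Theorem \ref{krachar}, which says $M$ is pure-injective if and only if $\mathbf{Y}(M)$ is injective, so the restriction both lands in and reflects injectivity.

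For full faithfulness I would first collect some elementary observations. Since coproducts in $\mathbf{Mod}\text{-}\mathcal{T}^{c}$ are pointwise and each $\mathcal{T}(Z,-)$ with $Z$ compact commutes with coproducts, $\mathbf{Y}$ preserves coproducts; consequently, if $P=\coprod_{\alpha}X_{\alpha}$ is a coproduct of compact objects then $\mathbf{Y}$ induces a bijection $\mathcal{T}(P,M)\to\mathrm{Hom}_{\mathbf{Mod}\text{-}\mathcal{T}^{c}}(\mathbf{Y}(P),\mathbf{Y}(M))$ for every $M$, as both sides identify with $\prod_{\alpha}\mathcal{T}(X_{\alpha},M)$ (the target by the Yoneda lemma) and the comparison map is the identity. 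Next, every object $N$ of $\mathcal{T}$ sits in a triangle $\Omega\xrightarrow{w}P\xrightarrow{\pi}N\xrightarrow{\delta}\Sigma\Omega$ with $P$ a coproduct of compact objects and $\mathbf{Y}(\pi)$ an epimorphism: take $P$ to be the coproduct of one copy of $X$ for each pair consisting of an object $X$ in a skeleton of $\mathcal{T}^{c}$ and a morphism $X\to N$. Applying the homological functor $\mathbf{Y}$, the fact that $\mathbf{Y}(\pi)$ is epi forces $\mathbf{Y}(w)$, hence $\mathbf{Y}(\Sigma w)$, to be a monomorphism; that is, $w$ and $\Sigma w$ are pure monomorphisms. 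Finally, a pure-injective object $M$ is injective with respect to all pure monomorphisms: given a pure monomorphism $j\colon A\to B$ and $g\colon A\to M$, form the homotopy pushout triangle $A\xrightarrow{(j,\,-g)}B\oplus M\to Q\to\Sigma A$; the induced morphism $M\to Q$ becomes a monomorphism after applying $\mathbf{Y}$ because $\mathbf{Y}(j)$ does, so $M\to Q$ is a split pure monomorphism, and any retraction of it composed with $B\to Q$ extends $g$ along $j$. (This can instead be quoted from \cite[Theorem 1.8]{Kra2002}.) Note also that the suspension, being an autoequivalence commuting with coproducts, preserves pure monomorphisms and pure-injectives.

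Now fix a pure-injective $M$, an arbitrary $N$, and a triangle as above. For faithfulness, suppose $f\colon N\to M$ satisfies $\mathbf{Y}(f)=0$; then the composite $f\pi$ vanishes on every morphism from a compact object into $P$, and since $P$ is a coproduct of compact objects this gives $f\pi=0$, so $f$ factors as $f=g\delta$ with $g\colon\Sigma\Omega\to M$. As $\Sigma w$ is a pure monomorphism and $M$ is injective with respect to such, $g=g'\circ(\Sigma w)$ for some $g'\colon\Sigma P\to M$; and since $(\Sigma w)\circ\delta=0$ in the rotated triangle, $f=0$. For fullness, given a natural transformation $\eta\colon\mathbf{Y}(N)\to\mathbf{Y}(M)$, use the bijection above to write $\eta\circ\mathbf{Y}(\pi)=\mathbf{Y}(\tilde{f})$ for a unique $\tilde{f}\colon P\to M$; then $\mathbf{Y}(\tilde{f}w)=\eta\circ\mathbf{Y}(\pi w)=0$, so $\tilde{f}w=0$ by the faithfulness just proved (applied with $\Omega$ in place of $N$), whence $\tilde{f}=f\pi$ for some $f\colon N\to M$, and $\mathbf{Y}(f)=\eta$ because $\mathbf{Y}(\pi)$ is an epimorphism. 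Thus $\mathbf{Y}\colon\mathcal{T}(N,M)\to\mathrm{Hom}(\mathbf{Y}(N),\mathbf{Y}(M))$ is bijective whenever $M$ is pure-injective; in particular $\mathbf{Y}$ is full and faithful on pure-injectives.

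For essential surjectivity, let $\mathscr{I}$ be an injective object of $\mathbf{Mod}\text{-}\mathcal{T}^{c}$ and consider the functor $\mathcal{T}^{\mathrm{op}}\to\mathbf{Ab}$ sending $N$ to $\mathrm{Hom}_{\mathbf{Mod}\text{-}\mathcal{T}^{c}}(\mathbf{Y}(N),\mathscr{I})$. Because $\mathbf{Y}$ is homological and $\mathrm{Hom}(-,\mathscr{I})$ is exact, this functor is cohomological, and it takes coproducts to products since $\mathbf{Y}$ takes coproducts to coproducts; so by Brown representability (see Remark \ref{brownproducts}) it is representable, say $\mathrm{Hom}(\mathbf{Y}(-),\mathscr{I})\cong\mathcal{T}(-,M)$ for some $M$ in $\mathcal{T}$. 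Evaluating this isomorphism at a compact object $X$ and using the Yoneda lemma gives $\mathbf{Y}(M)(X)=\mathcal{T}(X,M)\cong\mathrm{Hom}(\mathbf{Y}(X),\mathscr{I})\cong\mathscr{I}(X)$, naturally in $X$, so $\mathbf{Y}(M)\cong\mathscr{I}$; and then $M$ is pure-injective by Theorem \ref{krachar}. Together with the previous paragraph this yields the asserted equivalence. I expect the faithfulness step to be the main obstacle: it amounts to ruling out nonzero maps into a pure-injective object that are annihilated by $\mathbf{Y}$ (nonzero ``phantom'' maps into a pure-injective), and this is precisely where the description of pure-injectivity via injectivity with respect to pure monomorphisms is essential; by contrast the essential-surjectivity step is short once Brown representability is available.
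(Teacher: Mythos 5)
The paper does not prove this statement at all: it is quoted verbatim from Krause \cite[Corollary 1.10]{Kra2000}, so there is no internal argument to compare against. Your proof is correct and is essentially Krause's own: full faithfulness on pure-injectives via a presentation $\Omega\to P\to N\to\Sigma\Omega$ with $P$ a coproduct of compacts and $\mathbf{Y}(\pi)$ epic, together with injectivity of pure-injectives relative to pure monomorphisms, and essential surjectivity via Brown representability applied to $\mathrm{Hom}(\mathbf{Y}(-),\mathscr{I})$ (where the injectivity of $\mathscr{I}$ is correctly used to make this functor cohomological).
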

Note that, since $\mathbf{Y}$ is additive, by Corollary \ref{kraequivpi} a pure-injective object $M$ is indecomposable if and only if $\mathbf{Y}(M)$ is an indecomposable (injective) object.
\begin{remark}\label{yonedacoprod}
 Fix a collection $\mathrm{M}=\{M_{i}\mid i\in \mathtt{I}\}$ of objects in $\mathcal{T}$.  Since small coproducts exist in $\mathcal{T}$ (by Assumption \ref{ass22}) and $\mathbf{Ab}$, the morphisms $\gamma_{X,\mathrm{M}}$ combine to define a natural transformation $\coprod _{i}\mathcal{T}(-, M_{i})\to \mathcal{T}(-,\coprod _{i}M_{i})$. By Definition \ref{compactob} this transformation is in fact an isomorphism $\coprod _{i}\mathbf{Y}(M_{i})\simeq\mathbf{Y}(\coprod _{i}M_{i})$ in $\mathbf{Mod}\text{-}\mathcal{T}^{c}$ which shows that $\mathbf{Y}$ preserves small coproducts.
\end{remark}
\begin{corollary}\label{sigmaequiv}The functor $\mathbf{Y}$ gives an equivalence between the full subcategory of $\mathcal{T}$ consisting of $\Sigma$-pure-injective objects and the full subcategory of  $\mathbf{Mod}\text{-}\mathcal{T}^{c}$ consitsting of $\Sigma$-injective objects. 
\end{corollary}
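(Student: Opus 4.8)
The plan is to bootstrap from Corollary \ref{kraequivpi}, which already provides an equivalence via $\mathbf{Y}$ between the pure-injective objects of $\mathcal{T}$ and the injective objects of $\mathbf{Mod}\text{-}\mathcal{T}^{c}$. Since a $\Sigma$-pure-injective object is in particular pure-injective (take $\mathtt{I}$ to be a singleton) and a $\Sigma$-injective object is in particular injective, it suffices to prove the following criterion: for a pure-injective object $M$ of $\mathcal{T}$, the object $M$ is $\Sigma$-pure-injective if and only if $\mathbf{Y}(M)$ is $\Sigma$-injective. Once this is established, full faithfulness of the restriction of $\mathbf{Y}$ to the full subcategory of $\Sigma$-pure-injective objects is inherited from Corollary \ref{kraequivpi} by restriction, and essential surjectivity follows from the criterion together with the fact that every $\Sigma$-injective object, being injective, lies in the image of $\mathbf{Y}$ up to isomorphism.

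To prove the criterion, fix a set $\mathtt{I}$ and consider the constant collection $\mathrm{M}=\{M\mid i\in\mathtt{I}\}$. By Remark \ref{yonedacoprod} the functor $\mathbf{Y}$ preserves small coproducts, so there is an isomorphism $\mathbf{Y}(M^{(\mathtt{I})})\cong\mathbf{Y}(M)^{(\mathtt{I})}$ in $\mathbf{Mod}\text{-}\mathcal{T}^{c}$. By the equivalence of statements (1) and (2) in Theorem \ref{krachar}, the object $M^{(\mathtt{I})}$ is pure-injective if and only if $\mathbf{Y}(M^{(\mathtt{I})})$ is injective, that is, if and only if $\mathbf{Y}(M)^{(\mathtt{I})}$ is injective. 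Letting $\mathtt{I}$ range over all sets shows that $M$ is $\Sigma$-pure-injective if and only if $\mathbf{Y}(M)$ is $\Sigma$-injective, as required.

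Finally I would assemble the equivalence. By the criterion, $\mathbf{Y}$ carries each $\Sigma$-pure-injective object of $\mathcal{T}$ to a $\Sigma$-injective object of $\mathbf{Mod}\text{-}\mathcal{T}^{c}$, so $\mathbf{Y}$ does restrict to a functor between the two full subcategories in question; it is fully faithful because it is already so on the larger full subcategory of pure-injective objects by Corollary \ref{kraequivpi}. For essential surjectivity, let $\mathscr{M}$ be a $\Sigma$-injective object of $\mathbf{Mod}\text{-}\mathcal{T}^{c}$; since $\mathscr{M}$ is injective, Corollary \ref{kraequivpi} yields a pure-injective object $M$ of $\mathcal{T}$ with $\mathbf{Y}(M)\cong\mathscr{M}$, and then $\mathbf{Y}(M)$ is $\Sigma$-injective, so by the criterion $M$ is $\Sigma$-pure-injective. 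Hence $\mathbf{Y}$ restricts to an equivalence of the stated full subcategories. I do not expect a genuine obstacle here: all of the substance is already contained in Corollary \ref{kraequivpi}, Theorem \ref{krachar}, and the coproduct-preservation of $\mathbf{Y}$; the only point requiring a moment's care is confirming that $\mathbf{Y}$ genuinely lands in the $\Sigma$-injectives, which is exactly the criterion applied with $\mathtt{I}$ arbitrary.
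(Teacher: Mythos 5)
Your proposal is correct and follows essentially the same route as the paper: both reduce the statement to the observation that, since $\mathbf{Y}$ preserves coproducts (Remark \ref{yonedacoprod}) and exchanges pure-injectives with injectives (Corollary \ref{kraequivpi}, equivalently Theorem \ref{krachar}(1)$\Leftrightarrow$(2)), the object $M^{(\mathtt{I})}$ is pure-injective if and only if $\mathbf{Y}(M)^{(\mathtt{I})}$ is injective, after which fullness, faithfulness and essential surjectivity are inherited from the pure-injective/injective equivalence.
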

\begin{proof}By definition, an object $M$ of $\mathcal{T}$ is $\Sigma$-pure-injective if and only if, for every set $\mathtt{I}$, the coproduct $M^{(\mathtt{I})}$ is pure-injective. By Corollary \ref{kraequivpi} and Remark \ref{yonedacoprod}, given any such $\mathtt{I}$, $M^{(\mathtt{I})}$ is pure-injective if and only if $\mathbf{Y}(M^{(\mathtt{I})})\simeq(\mathbf{Y}(M))^{(\mathtt{I})}$ is injective. 

This shows that  $\mathbf{Y}$ induces a functor from the full subcategory of $\Sigma$-pure-injectives in $\mathcal{T}$ and the full subcategory of $\mathbf{Mod}\text{-}\mathcal{T}^{c}$ consisting of $\Sigma$-injectives. That this functor is full, faithful and dense, follows by Corollary \ref{kraequivpi} together with the fact that any $\Sigma$-pure-injective object of $\mathcal{T}$ is pure-injective.
\end{proof}
Corollary \ref{sigmapropsyanah} is analogous to \cite[Corollary 8.2(i,ii)]{JenLen1989}.
\begin{corollary}\label{sigmapropsyanah} Let $M$ be a $\Sigma$-pure-injective object of $\mathcal{T}$ and let $G\in\mathcal{S}$. 
\begin{enumerate}
\item For any set $\mathtt{I}$, the objects $M^{(\mathtt{I})}$ and $M^{\mathtt{I}}$ are $\Sigma$-pure-injective.
\item If $h\colon L\to M$ is a pure monomorphism in $\mathcal{T}$ then $L$ is $\Sigma$-pure-injective and $h$ is a section.
\end{enumerate}
\end{corollary}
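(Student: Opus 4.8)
The plan is to reduce everything to the descending chain condition on pp-definable subgroups. Combining Lemma~\ref{sigmathendcc} with Corollary~\ref{sigmathendccconv} and Corollary~\ref{sigmaequiv}, an object $N$ of $\mathcal{T}$ is $\Sigma$-pure-injective if and only if, for every $G\in\mathcal{S}$, every descending chain of pp-definable subgroups of $N$ of sort $G$ stabilises (a descending chain of pp-definable subgroups of sort $X$ for a compact $X$ corresponds, via a chosen isomorphism, to one whose sort lies in $\mathcal{S}$, so it is harmless to restrict to sorts in $\mathcal{S}$). Since $M$ is $\Sigma$-pure-injective, Lemma~\ref{sigmathendcc} already supplies this chain condition for $M$ itself, and the task is to transport it to $M^{\mathtt{I}}$ and to $L$.

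For~(1), I would dispatch $M^{(\mathtt{I})}$ immediately: for any set $\mathtt{J}$ there is a natural isomorphism $(M^{(\mathtt{I})})^{(\mathtt{J})}\cong M^{(\mathtt{I}\times\mathtt{J})}$, and the right-hand side is pure-injective because $M$ is $\Sigma$-pure-injective; hence $M^{(\mathtt{I})}$ is $\Sigma$-pure-injective. For $M^{\mathtt{I}}$, assuming $\mathtt{I}\neq\emptyset$ (the empty case being trivial), take a descending chain $\varphi_{1}(M^{\mathtt{I}})\supseteq\varphi_{2}(M^{\mathtt{I}})\supseteq\cdots$ of pp-definable subgroups of sort $G$ and run it through Lemma~\ref{ppprops2}. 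The isomorphisms $\varphi_{n}(M^{\mathtt{I}})\cong\prod_{\mathtt{I}}\varphi_{n}(M)$ are all restrictions of the one isomorphism $\lambda_{G,\mathrm{M}}$, so the chain $\prod_{\mathtt{I}}\varphi_{n}(M)$ is descending, whence $\varphi_{1}(M)\supseteq\varphi_{2}(M)\supseteq\cdots$ is descending; this stabilises by Lemma~\ref{sigmathendcc}, and $\lambda_{G,\mathrm{M}}^{-1}$ carries the stabilisation back to the original chain. Corollary~\ref{sigmathendccconv} then gives that $\mathbf{Y}(M^{\mathtt{I}})$ is $\Sigma$-injective, and Corollary~\ref{sigmaequiv} that $M^{\mathtt{I}}$ is $\Sigma$-pure-injective.

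For~(2), given the pure monomorphism $h\colon L\to M$ and a descending chain $\varphi_{1}(L)\supseteq\varphi_{2}(L)\supseteq\cdots$ of sort $G$, I would first replace the $\varphi_{n}$ by their partial conjunctions $\psi_{n}=\varphi_{1}\wedge\cdots\wedge\varphi_{n}$, which are again pp-formulas in one free variable of sort $G$. Then $\psi_{n}(L)=\varphi_{n}(L)$, while $\psi_{1}(M)\supseteq\psi_{2}(M)\supseteq\cdots$ is genuinely descending. Because $h$ is a pure monomorphism, Lemma~\ref{ppprops} applied to each $\psi_{n}$ gives $\psi_{n}(L)=\{g\in\mathcal{T}(G,L)\mid hg\in\psi_{n}(M)\}$; and since $M$ is $\Sigma$-pure-injective, Lemma~\ref{sigmathendcc} makes $\psi_{n}(M)$ stabilise, forcing $\varphi_{n}(L)=\psi_{n}(L)$ to stabilise as well. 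Corollary~\ref{sigmathendccconv} and Corollary~\ref{sigmaequiv} then yield that $L$ is $\Sigma$-pure-injective; being in particular pure-injective, $L$ makes the pure monomorphism $h$ a section, as claimed.

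I expect the one step needing real care to be the passage to the conjunctions $\psi_{n}$ in~(2): the pp-formulas witnessing a descending chain in $L$ need not witness a descending chain in $M$, so the known chain condition for $M$ cannot be invoked until the formulas have been closed under finite conjunction (which stays inside the class of pp-formulas in one free variable of sort $G$ and does not change the solution sets in $L$). The remaining steps are formal unwindings of Lemmas~\ref{ppprops} and~\ref{ppprops2} together with the equivalences between $\mathcal{T}$ and $\mathbf{Mod}\text{-}\mathcal{T}^{c}$ recorded above.
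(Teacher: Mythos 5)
Your proof is correct and follows essentially the same route as the paper: both reduce the statement to the descending chain condition on pp-definable subgroups (via Lemma~\ref{sigmathendcc}, Corollary~\ref{sigmathendccconv} and Corollary~\ref{sigmaequiv}) and then transport chains using Lemma~\ref{ppprops2} for part (1) and Lemma~\ref{ppprops} for part (2). The one substantive difference is that in part (2) you explicitly replace the $\varphi_{n}$ by the partial conjunctions $\psi_{n}=\varphi_{1}\wedge\dots\wedge\varphi_{n}$ before invoking the chain condition in $M$; this step is genuinely needed, since a descending chain of the $\varphi_{n}(L)$ does not force the $\varphi_{n}(M)$ to be descending, and the paper's one-line proof of (2) leaves it implicit. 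Your shortcut for $M^{(\mathtt{I})}$ via $(M^{(\mathtt{I})})^{(\mathtt{J})}\cong M^{(\mathtt{I}\times\mathtt{J})}$ is likewise a valid, slightly more direct alternative to the paper's appeal to Lemma~\ref{ppprops2}.
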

\begin{proof}It is worth noting that we now have the equivalence of (1) and (5) of Theorem \ref{characterisation}. That is, by Lemma \ref{sigmathendcc} and Corollaries \ref{sigmathendccconv} and \ref{sigmaequiv}, an object $M$ is $\Sigma$-pure-injective if and only if every descending chain of pp-definable subgroups of $M$ of each sort stabilises. 

The proof of part (1) is now straightforward, recalling that, by Lemma \ref{ppprops2}, we have that $\varphi(M)^{(\mathtt{I})}\simeq \varphi(M^{(\mathtt{I})})$ and $\varphi(M^{\mathtt{I}})\simeq \varphi(M)^{\mathtt{I}}$ for any pp-formula $\varphi$ of sort $G$. For (2), as above it suffices to recall that $\varphi(L)=\{g\in\mathcal{T}(G,L)\mid hg\in\varphi(M)\}$ for any pp-formula $\varphi$ of sort $G$, by Lemma \ref{ppprops}.
\end{proof}
To prove Lemma \ref{sigmathenCARD} we use Corollary \ref{sigmainjcoprodindpis}, a result of Garcia and Dung.
\begin{corollary}\label{sigmainjcoprodindpis}\emph{\cite[Corollary 1.6]{GarDun1994}} Let $\mathcal{A}$ be a Grothendieck category with a set of finitely generated generators. Any $\Sigma$-injective object of $\mathcal{A}$ is a coproduct of indecomposable objects. 
\end{corollary}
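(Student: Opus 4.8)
The plan is to transplant the classical Matlis--Faith strategy for decomposing injective modules over Noetherian-like rings into the Grothendieck setting. Two observations will drive it. First, since $\mathscr{M}$ is in particular injective, and $\mathcal{A}$ being Grothendieck has injective envelopes with monomorphisms out of injectives splitting, one may freely split off injective subobjects of $\mathscr{M}$. Second, $\Sigma$-injectivity upgrades this closure property: if $\{\mathscr{E}_{i}\}_{i\in\mathtt{I}}$ is any family of injective subobjects of $\mathscr{M}$ whose sum in $\mathscr{M}$ is direct, then choosing complements $\mathscr{M}=\mathscr{E}_{i}\oplus\mathscr{F}_{i}$ gives $\mathscr{M}^{(\mathtt{I})}=\bigl(\coprod_{i}\mathscr{E}_{i}\bigr)\oplus\bigl(\coprod_{i}\mathscr{F}_{i}\bigr)$; as $\mathscr{M}^{(\mathtt{I})}$ is injective by hypothesis, $\coprod_{i}\mathscr{E}_{i}$ is injective, hence a direct summand of $\mathscr{M}$. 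Finally, being injective, $\mathscr{M}$ is fp-injective, so Proposition \ref{fpinjgd} provides the crucial finiteness input: the $\mathscr{M}$-annihilator subobjects of each finitely generated generator satisfy the ascending chain condition.

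The key step --- and, I expect, the main obstacle --- is to show that every nonzero $\Sigma$-injective object $\mathscr{N}$ contains an indecomposable injective subobject; equivalently, that $\mathscr{N}$ contains a uniform subobject, since the injective envelope of a uniform object is indecomposable and may be realised inside $\mathscr{N}$. I would reduce to a nonzero finitely generated subobject $\mathscr{S}\subseteq\mathscr{N}$, presented as a quotient $\mathfrak{q}\colon\mathscr{G}\to\mathscr{S}$ of a finitely generated generator (enlarging the generating set so that it is closed under finite coproducts, which is harmless), and then transfer the chain condition: forming preimages along $\mathfrak{q}$ is an order-embedding that carries each intersection of kernels of morphisms $\mathscr{S}\to\mathscr{M}$ to an $\mathscr{M}$-annihilator subobject of $\mathscr{G}$, so these ``relatively $\mathscr{M}$-closed'' subobjects of $\mathscr{S}$ inherit the ascending chain condition of the previous paragraph. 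From here the classical argument selects a relatively $\mathscr{M}$-closed subobject that is maximal subject to an appropriate meet-irreducibility constraint and extracts from it a uniform subobject of $\mathscr{S}$. The delicate part is making this last extraction work categorically: the relatively closed subobjects form a complete meet-semilattice that is not closed under joins, so ``maximal proper'' alone does not force uniformity, and one must argue more carefully --- this is exactly the content packaged in the work of Harada and of Garcia and Dung that we are citing, and one may either invoke it directly or reconstruct it here.

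With that step granted, the proof concludes via Zorn's lemma. Because independence of a family of subobjects is a finitary condition, the families of indecomposable injective subobjects of $\mathscr{M}$ with direct sum are closed under unions of chains, so there is a maximal such family $\{\mathscr{M}_{i}\}_{i\in\mathtt{I}}$. By the closure property of the first paragraph, $\coprod_{i}\mathscr{M}_{i}$ is a direct summand, say $\mathscr{M}=\bigl(\coprod_{i}\mathscr{M}_{i}\bigr)\oplus\mathscr{N}$, and $\mathscr{N}$, being a direct summand of $\mathscr{M}$, is again $\Sigma$-injective. If $\mathscr{N}\neq 0$, the key step furnishes an indecomposable injective subobject $\mathscr{M}_{\ast}\subseteq\mathscr{N}$; since $\mathscr{M}_{\ast}\cap\coprod_{i}\mathscr{M}_{i}\subseteq\mathscr{N}\cap\coprod_{i}\mathscr{M}_{i}=0$, the family $\{\mathscr{M}_{i}\}_{i\in\mathtt{I}}\cup\{\mathscr{M}_{\ast}\}$ is still independent, contradicting maximality. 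Hence $\mathscr{N}=0$ and $\mathscr{M}=\coprod_{i}\mathscr{M}_{i}$ is a coproduct of indecomposable objects. One moreover gets for free that each $\mathscr{M}_{i}$ has local endomorphism ring, since an indecomposable injective object of a Grothendieck category is uniform and therefore has local endomorphism ring; this is the ingredient needed for condition (6) of Theorem \ref{characterisation}.
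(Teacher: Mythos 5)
The paper does not prove this statement at all: it is imported verbatim as \cite[Corollary 1.6]{GarDun1994}, so there is no in-text argument to compare yours against. Judged on its own terms, your outline is the standard Faith--Harada strategy and its scaffolding is sound: the observation that $\Sigma$-injectivity makes the internal direct sum of any independent family of injective subobjects again injective (hence a summand), the Zorn's lemma argument with a maximal independent family of indecomposable injective subobjects, the transfer of the ascending chain condition from a generator $\mathscr{G}$ to a finitely generated subobject $\mathscr{S}$ via preimage along an epimorphism (which is indeed an order-embedding on subobjects and carries intersections of kernels to $\mathscr{M}$-annihilator subobjects of $\mathscr{G}$), and the closing remark that indecomposable injectives in a Grothendieck category are uniform with local endomorphism rings, which is what Corollary \ref{decompcorr} and condition (6) of Theorem \ref{characterisation} actually consume downstream.

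The genuine gap is the one you name yourself: the existence of a uniform (equivalently, indecomposable injective) subobject inside every nonzero $\Sigma$-injective object. Everything else in your argument is routine; this step is the entire mathematical content of the corollary, and ``one may either invoke it directly or reconstruct it here'' does not close it. Be careful about what you would invoke: citing \cite[Corollary 1.6]{GarDun1994} itself at this point is circular, so you would need the supporting lemmas of Harada \cite{Har1973} or of Garcia and Dung, which is to say you would be back to citing the result the paper already cites. If you intend to reconstruct it, the delicate point is exactly the one you flag: the $\mathscr{M}$-annihilator-closed subobjects of $\mathscr{S}$ form a complete meet-semilattice on which ACC holds, but maximality alone does not yield uniformity, and the extraction of a uniform subobject requires the meet-irreducibility analysis of Harada. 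Two smaller points: Proposition \ref{fpinjgd} as stated requires finitely \emph{presented} generators while the corollary assumes only finitely \emph{generated} ones, so for the ACC input you should invoke Theorem \ref{harfai} (Harada) instead, whose hypotheses match; and your claim that enlarging the generating set by finite coproducts is ``harmless'' is correct but should be justified by reapplying the characterisation of $\Sigma$-injectivity to the enlarged generating set rather than by asserting that ACC passes to finite coproducts directly.
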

Lemma \ref{sigmathenCARD} is analogous to the implication of (v) by (i) in \cite[Theorem 8.1]{JenLen1989}. The case $\mathtt{I}=\{0\}$ shows that $\Sigma$-pure-injectives decompose into indecomposables.
\begin{lemma}\label{sigmathenCARD} If $M$ is a $\Sigma$-pure-injective object of $\mathcal{T}$ then for any set $\mathtt{I}$ the product $M^{\mathtt{I}}$ is a coproduct of indecomposable $\Sigma$-pure-injectives. 
\end{lemma}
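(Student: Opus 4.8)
The plan is to transport an abstract coproduct decomposition from the functor category back to $\mathcal{T}$ along the restricted Yoneda functor. First, by Corollary \ref{sigmapropsyanah}(1) the product $M^{\mathtt{I}}$ is $\Sigma$-pure-injective, hence by Corollary \ref{sigmaequiv} its image $\mathbf{Y}(M^{\mathtt{I}})$ is a $\Sigma$-injective object of $\mathbf{Mod}\text{-}\mathcal{T}^{c}$. By Remark \ref{locallycoherentremark} the category $\mathbf{Mod}\text{-}\mathcal{T}^{c}$ is a Grothendieck category with a set of finitely generated generators, so Corollary \ref{sigmainjcoprodindpis} applies and produces an isomorphism $\mathbf{Y}(M^{\mathtt{I}})\simeq\coprod_{\alpha\in A}\mathscr{E}_{\alpha}$ in $\mathbf{Mod}\text{-}\mathcal{T}^{c}$ in which each $\mathscr{E}_{\alpha}$ is indecomposable; being a direct summand of the injective object $\mathbf{Y}(M^{\mathtt{I}})$, each $\mathscr{E}_{\alpha}$ is itself injective.

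Next I would lift each $\mathscr{E}_{\alpha}$ through $\mathbf{Y}$. By Corollary \ref{kraequivpi}, together with the remark following it, for each $\alpha$ there is an indecomposable pure-injective object $E_{\alpha}$ of $\mathcal{T}$ with $\mathbf{Y}(E_{\alpha})\simeq\mathscr{E}_{\alpha}$. Set $E=\coprod_{\alpha\in A}E_{\alpha}$, which exists by Assumption \ref{ass22}. Since $\mathbf{Y}$ preserves small coproducts (Remark \ref{yonedacoprod}), we obtain $\mathbf{Y}(E)\simeq\coprod_{\alpha}\mathbf{Y}(E_{\alpha})\simeq\coprod_{\alpha}\mathscr{E}_{\alpha}\simeq\mathbf{Y}(M^{\mathtt{I}})$, and the right-hand side is injective; hence $E$ is pure-injective by the equivalence of statements (1) and (2) in Theorem \ref{krachar}. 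Now $E$ and $M^{\mathtt{I}}$ are both pure-injective objects of $\mathcal{T}$ with isomorphic images under $\mathbf{Y}$, so applying the equivalence of Corollary \ref{kraequivpi} gives $M^{\mathtt{I}}\simeq E=\coprod_{\alpha}E_{\alpha}$.

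Finally I would check that each summand $E_{\alpha}$ is $\Sigma$-pure-injective. Each $E_{\alpha}$ is a retract of $E\simeq M^{\mathtt{I}}$ via the coproduct structure maps $u_{\alpha,\mathrm{E}}$ and $q_{\alpha,\mathrm{E}}$ of Notation \ref{categnotation}; a split monomorphism is a pure monomorphism, since applying $\mathbf{Y}$ and evaluating at any compact object produces a split, hence injective, map of abelian groups. As $M^{\mathtt{I}}$ is $\Sigma$-pure-injective by Corollary \ref{sigmapropsyanah}(1), Corollary \ref{sigmapropsyanah}(2) applied to the inclusion $u_{\alpha,\mathrm{E}}\colon E_{\alpha}\to M^{\mathtt{I}}$ shows $E_{\alpha}$ is $\Sigma$-pure-injective. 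Thus $M^{\mathtt{I}}\simeq\coprod_{\alpha}E_{\alpha}$ exhibits $M^{\mathtt{I}}$ as a coproduct of indecomposable $\Sigma$-pure-injectives.

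The step requiring the most care is the transport of the decomposition from $\mathbf{Mod}\text{-}\mathcal{T}^{c}$ back to $\mathcal{T}$: one must know that the lifted coproduct $\coprod_{\alpha}E_{\alpha}$ is again pure-injective before the equivalence of Corollary \ref{kraequivpi} can be used to identify it with $M^{\mathtt{I}}$, and this is precisely where the fact that $\mathbf{Y}$ preserves small coproducts (Remark \ref{yonedacoprod}) and the characterisation of pure-injectivity in Theorem \ref{krachar} are essential. Everything else is a routine invocation of the results already established.
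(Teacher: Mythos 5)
Your proposal is correct and follows essentially the same route as the paper: decompose $\mathbf{Y}(M^{\mathtt{I}})$ via Corollary \ref{sigmainjcoprodindpis}, lift the indecomposable injective summands through the equivalence of Corollary \ref{kraequivpi}, use Remark \ref{yonedacoprod} to identify the lifted coproduct with $M^{\mathtt{I}}$, and invoke Corollary \ref{sigmapropsyanah}(2) to get $\Sigma$-pure-injectivity of each summand. The only cosmetic difference is that the paper obtains the pure monomorphism $L_{j}\to M^{\mathtt{I}}$ as a section lifted from the functor category before assembling the coproduct, whereas you use the coproduct inclusion after the identification; both are valid.
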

\begin{proof}Recall $\mathbf{Mod}\text{-}\mathcal{T}^{c}$ is a Grothendieck category with a set of finitely generated generators. Let $K=M^{\mathtt{I}}$ so that, by Corollary \ref{sigmapropsyanah}(1), $K$ is $\Sigma$-pure-injective, and so $\mathbf{Y}(K)$ is $\Sigma$-injective by Corollary \ref{sigmaequiv}. By Corollary \ref{sigmainjcoprodindpis} this means $\mathbf{Y}(K)\simeq\coprod_{j\in \mathtt{J}}\mathscr{L}_{j}$ where $\mathscr{L}_{j}$ is an indecomposable object of $\mathbf{Mod}\text{-}\mathcal{T}^{c}$ for each $j$. 

For each $j$, there is a section $u_{j}\colon\mathscr{L}_{j}\to\mathbf{Y}(K)$, which means $\mathscr{L}_{j}$ is injective, and so by Corollary \ref{kraequivpi} we have $\mathscr{L}_{j}\simeq\mathbf{Y}(L_{j})$ for some pure-injective object $L_{j}$. Since $\mathscr{L}_{j}$ is indecomposable, we have that $L_{j}$ is indecomposable. Again, applying Corollary \ref{kraequivpi} gives a section $h_{j}\colon L_{j}\to K$ in $\mathcal{T}$ with $\mathbf{Y}(h_{j})=u_{j}$. Altogether this means $h_{j}$ is a pure monomorphism into a $\Sigma$-pure-injective object, and so $L_{j}$ is $\Sigma$-pure-injective by Corollary \ref{sigmapropsyanah}(2). By Remark \ref{yonedacoprod} we have that $\mathbf{Y}(\coprod_{j}L_{j})\simeq\mathbf{Y}(K)$ is injective, and so $\coprod_{j}L_{j}\simeq K$ by Corollary \ref{kraequivpi}.
\end{proof}
\section{Completing the proof of the characterisation.}\label{5}
Before proving Theorem \ref{characterisation} we note a consequence of the results gathered so far. Recall, from Definitions \ref{sorts2-1} and \ref{compactob}, that the cardinality of the $\mathfrak{L}^{\mathcal{T}}$-structure $\mathsf{M}$ underlying any object $M$ of $\mathcal{T}$ is defined and denoted $\vert\mathsf{M}\vert=\vert\bigsqcup_{G\in\mathcal{S}}\mathcal{T}(G,M)\vert$ where $\mathcal{S}$ is a fixed chosen set of isoclass representatives, one for each class. Corollary \ref{sigmacardy} is analogous to \cite[Corollary 8.2(iii)]{JenLen1989}. 
\begin{corollary}\label{sigmacardy}There exists a cardinal $\kappa$ such that the $\mathfrak{L}^{\mathcal{T}}$-structure underlying any indecomposable pure-injective object of $\mathcal{T}$ has cardinality at most $\kappa$.
\end{corollary}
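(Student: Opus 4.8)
The plan is to push the statement across the equivalence of Corollary \ref{kraequivpi} and reduce it to the fact that, up to isomorphism, there is only a \emph{set} of indecomposable injective objects in the locally coherent category $\mathbf{Mod}\text{-}\mathcal{T}^{c}$. Recall first that, by Corollary \ref{kraequivpi} and the remark immediately following it, an object $M$ of $\mathcal{T}$ is an indecomposable pure-injective if and only if $\mathbf{Y}(M)$ is an indecomposable injective object of $\mathbf{Mod}\text{-}\mathcal{T}^{c}$. Since $\mathbf{Y}(M)(G)=\mathcal{T}(G,M)$ for each $G\in\mathcal{S}$, and $\mathcal{S}$ is a set, the cardinality $\vert\mathsf{M}\vert=\vert\bigsqcup_{G\in\mathcal{S}}\mathcal{T}(G,M)\vert$ of the underlying $\mathfrak{L}^{\mathcal{T}}$-structure equals the cardinal $\sum_{G\in\mathcal{S}}\vert\mathbf{Y}(M)(G)\vert$. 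It therefore suffices to produce a cardinal $\kappa$ with $\sum_{G\in\mathcal{S}}\vert\mathscr{E}(G)\vert\leq\kappa$ for every indecomposable injective object $\mathscr{E}$ of $\mathbf{Mod}\text{-}\mathcal{T}^{c}$.

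The next step is to check that the indecomposable injectives of $\mathbf{Mod}\text{-}\mathcal{T}^{c}$ form a set up to isomorphism. Let $\mathscr{E}\neq0$ be indecomposable injective. Since $\mathtt{G}$ generates $\mathbf{Mod}\text{-}\mathcal{T}^{c}$ there is some $\mathscr{G}\in\mathtt{G}$ admitting a nonzero morphism $\mathscr{G}\to\mathscr{E}$; let $\mathscr{S}\subseteq\mathscr{E}$ be its image, a nonzero quotient of $\mathscr{G}$. An injective envelope of $\mathscr{S}$ may be taken inside $\mathscr{E}$, where it is a nonzero direct summand, hence all of $\mathscr{E}$ by indecomposability; thus $\mathscr{E}$ is an injective envelope of $\mathscr{S}$. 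Now $\mathbf{Mod}\text{-}\mathcal{T}^{c}$, being a Grothendieck category, is well-powered, so each $\mathscr{G}\in\mathtt{G}$ has only a set of subobjects and hence only a set of isomorphism classes of quotients; as $\mathtt{G}$ is itself a set, only a set of objects $\mathscr{S}$ arises in this way, and injective envelopes being unique up to isomorphism, there is only a set of indecomposable injectives up to isomorphism.

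Finally, fix representatives $\{\mathscr{E}_{k}\mid k\in\mathtt{K}\}$ for these isomorphism classes. For each $k$ put $\kappa_{k}=\sum_{G\in\mathcal{S}}\vert\mathscr{E}_{k}(G)\vert$, which is a cardinal because $\mathcal{S}$ and each set $\mathscr{E}_{k}(G)$ are sets, and let $\kappa=\sup_{k\in\mathtt{K}}\kappa_{k}$, again a cardinal since $\mathtt{K}$ is a set. If $M$ is an indecomposable pure-injective object of $\mathcal{T}$, then $\mathbf{Y}(M)\cong\mathscr{E}_{k}$ for some $k\in\mathtt{K}$ by the first paragraph, whence $\vert\mathsf{M}\vert=\sum_{G\in\mathcal{S}}\vert\mathcal{T}(G,M)\vert=\sum_{G\in\mathcal{S}}\vert\mathscr{E}_{k}(G)\vert=\kappa_{k}\leq\kappa$, as required.

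The only substantive point is the middle paragraph: one must rule out that the indecomposable injectives of $\mathbf{Mod}\text{-}\mathcal{T}^{c}$ form a proper class. This is handled by combining that $\mathtt{G}$ is a set of generators, that Grothendieck categories are well-powered, and that an indecomposable injective is an injective envelope of any of its nonzero subobjects; everything else is bookkeeping and routine cardinal arithmetic.
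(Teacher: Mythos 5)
Your proof is correct and takes essentially the same route as the paper: both reduce the statement to the fact that the indecomposable pure-injectives form a set up to isomorphism and then take a supremum of cardinalities over representatives. The only difference is that the paper simply cites this set-ness as Corollary \ref{setofindpures} (a remark due to Krause), whereas you derive it yourself on the $\mathbf{Mod}\text{-}\mathcal{T}^{c}$ side via Corollary \ref{kraequivpi}, using that an indecomposable injective is the injective envelope of a nonzero quotient of a generator in a well-powered Grothendieck category -- a correct and self-contained substitute for the citation.
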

We delay the proof of Corollary \ref{sigmacardy} until after Corollary \ref{setofindpures}.
\begin{remark}We now note a non-trivial complication in our setting of compactly generated triangulated categories, which is absent in the module-theoretic situation. In the spirit of the results presented so far, it is natural to ask if one may adapt the proof of \cite[Corollary 8.2(iii)]{JenLen1989} to prove Corollary \ref{sigmacardy}. This seems straightforward at first glance, since there are multi-sorted versions of the \textit{downward L\"{o}wenheim}-\textit{Skolem} theorem; see for example \cite[Theorem 37]{DGN2016}. 

Recall that, in the language $\mathfrak{L}_{A}$ of modules over a fixed ring $A$ from Example \ref{onesorted}, any $\mathfrak{L}_{A}$-structure is an object in the category of $A$-modules, and any elementary embedding of $\mathfrak{L}_{A}$-structures is a pure embedding of $A$-modules. The same correspondence between structures need not be true here. Although objects of $\mathbf{Mod}\text{-}\mathcal{T}^{c}$ correspond to structures over $\mathfrak{L}^{\mathcal{T}}$ by Remark \ref{canonlangrem}, the former need not be given by objects of $\mathcal{T}$ as $\mathbf{Y}$ need not be essentially surjective. Fortunately, here we may instead use Corollary \ref{setofindpures}, a remark due to Krause, which also shortens the proof.
\end{remark}
\begin{corollary}\label{setofindpures}\emph{(}See \emph{\cite[Corollary 1.10]{Kra2000}).} There is a set $\mathtt{Sp}$ of isomorphism classes of objects in $\mathcal{T}$ which are pure-injective indecomposable. 
\end{corollary}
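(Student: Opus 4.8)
The plan is to transport the statement across the equivalence of Corollary \ref{kraequivpi} and then invoke the fact that a Grothendieck category admits only a set of indecomposable injective objects up to isomorphism.

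First I would observe that, by Corollary \ref{kraequivpi}, the restricted Yoneda functor $\mathbf{Y}$ restricts to an equivalence between the full subcategory of pure-injective objects of $\mathcal{T}$ and the full subcategory of injective objects of $\mathbf{Mod}\text{-}\mathcal{T}^{c}$, and (since $\mathbf{Y}$ is additive, as noted after Corollary \ref{kraequivpi}) a pure-injective object $M$ of $\mathcal{T}$ is indecomposable if and only if $\mathbf{Y}(M)$ is an indecomposable injective object of $\mathbf{Mod}\text{-}\mathcal{T}^{c}$. Hence it suffices to produce a set of isomorphism classes of indecomposable injective objects of $\mathbf{Mod}\text{-}\mathcal{T}^{c}$; pulling these back along the equivalence and choosing one pure-injective object of $\mathcal{T}$ lying over each then yields the required set $\mathtt{Sp}$.

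Next I would use that $\mathbf{Mod}\text{-}\mathcal{T}^{c}$ is a Grothendieck category --- indeed locally coherent, by Remark \ref{locallycoherentremark} --- so in particular it is well-powered and has a generating set $\mathtt{G}$ consisting of finitely presented objects. Let $\mathscr{E}$ be an indecomposable injective object of $\mathbf{Mod}\text{-}\mathcal{T}^{c}$. Since $\mathscr{E}\neq 0$ and $\mathtt{G}$ generates, there is a nonzero morphism $\mathscr{G}\to\mathscr{E}$ for some $\mathscr{G}\in\mathtt{G}$, whose image $\mathscr{I}$ is a nonzero subobject of $\mathscr{E}$ and a quotient of $\mathscr{G}$. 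Because $\mathscr{E}$ is indecomposable and injective, any injective envelope of $\mathscr{I}$ formed inside $\mathscr{E}$ is a direct summand, hence equals $\mathscr{E}$; thus $\mathscr{I}$ is essential in $\mathscr{E}$, i.e. $\mathscr{E}$ is an injective envelope of $\mathscr{I}$. As $\mathtt{G}$ is a set and $\mathbf{Mod}\text{-}\mathcal{T}^{c}$ is well-powered, the objects arising as quotients of some $\mathscr{G}\in\mathtt{G}$ form a set up to isomorphism, and since injective envelopes are unique up to isomorphism, the indecomposable injective objects of $\mathbf{Mod}\text{-}\mathcal{T}^{c}$ therefore form a set up to isomorphism.

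The only genuine subtlety is that $\mathbf{Y}$ need not be essentially surjective, so one cannot transport a set-theoretic bound along it naively; the resolution is that it restricts to an \emph{equivalence} on the subcategories of (pure-)injective objects, and it is there that the bound is obtained. The remaining ingredients --- well-poweredness of Grothendieck categories, existence and uniqueness of injective envelopes, and the fact that an indecomposable injective object is an injective envelope of each of its nonzero subobjects --- are all standard, so I expect no real obstacle beyond this bookkeeping.
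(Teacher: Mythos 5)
Your proof is correct. The paper itself gives no argument here --- it simply cites Krause's \cite[Corollary 1.10]{Kra2000} --- and what you have written is exactly the standard proof behind that citation: transport along the equivalence of Corollary \ref{kraequivpi} to indecomposable injectives in the Grothendieck category $\mathbf{Mod}\text{-}\mathcal{T}^{c}$, each of which is an injective envelope of a quotient of a generator, so that well-poweredness and uniqueness of injective envelopes give a set.
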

\begin{proof}[Proof of Corollary \ref{sigmacardy}]It suffices to let $\kappa=\vert\bigsqcup\mathcal{T}(G,M)\vert$ where $G$ runs through $\mathcal{S}$ and $M$ runs through the set $\mathtt{Sp}$ from Corollary \ref{setofindpures}.
\end{proof}
We now proceed toward proving Theorem \ref{characterisation}. For this we require Corollary \ref{CARDtensigma}, and to this end, in Theorem \ref{REMAK} we recall well-known results about decompositions of coproducts in abelian categories. For consistency we follow \cite{Pre1988}.
\begin{theorem}\label{REMAK}Let $\mathcal{A}$ be an abelian category, and let $\mathrm{L}=\{\mathscr{L}_{j}\mid j\in \mathtt{J}\}$ and $\mathrm{N}=\{\mathscr{N}_{k}\mid k\in \mathtt{K}\}$ be collections of objects in $\mathcal{A}$. The following statements hold.
\begin{enumerate}
\item \emph{\cite[p.82, Theorem 4.A7]{Pre1988}} Suppose the collections $\mathrm{L}$ and $\mathrm{N}$ consist of indecomposable objects with local endomorphism rings. If we have  $\coprod_{j\in \mathtt{J}} \mathscr{L}_{j}\simeq \coprod_{k\in \mathtt{K}} \mathscr{N}_{k}$ then there is a bijection $\sigma\colon \mathtt{J}\to \mathtt{K}$ with $\mathscr{L}_{j}\simeq \mathscr{N}_{\sigma(j)}$ for all $j$.
\item \emph{\cite[p.82, Theorem 4.A11]{Pre1988}} Suppose that $\mathtt{J}=\mathtt{K}$, and that for all $j$, $\mathscr{L}_{j}$ is indecomposable and $\mathscr{L}_{j}$ is the injective hull of $\mathscr{N}_{j}$. Suppose there is a section $\mathscr{P}\to \coprod _{j\in \mathtt{J}}\mathscr{L}_{j}$. Then there is a subset $\mathtt{H}\subseteq \mathtt{J}$ with $\coprod _{j\in \mathtt{J}}\mathscr{L}_{j}\simeq \mathscr{P}\amalg \coprod _{j\in \mathtt{H}}\mathscr{L}_{j}$.
\end{enumerate}
\end{theorem}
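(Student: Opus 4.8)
The plan is to recognise both statements as classical facts about direct-sum decompositions in additive categories, traceable to Azumaya, and to deduce them directly from the results of Prest already cited in the statement; in the applications the ambient category $\mathcal{A}$ will be $\mathbf{Mod}\text{-}\mathcal{T}^{c}$, which is Grothendieck and in particular abelian, so the hypotheses of \cite[\S 4.A]{Pre1988} hold verbatim and strictly speaking no new argument is required. For completeness I would nonetheless record how the underlying arguments run, in case a self-contained version is wanted.

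For part (1) the essential tool is the \emph{exchange property} enjoyed by any object $X$ with local endomorphism ring: whenever $X$ occurs as a direct summand of a coproduct $\coprod_{\alpha}Z_{\alpha}$, it is already a summand of a finite subcoproduct, and in fact some $Z_{\alpha}$ splits as $Z_{\alpha}'\amalg Z_{\alpha}''$ with $X\simeq Z_{\alpha}'$ and the complement of $X$ obtained by replacing $Z_{\alpha}$ with $Z_{\alpha}''$. First I would derive this exchange property from the local hypothesis by an idempotent-lifting argument. Then, given $\coprod_{j}\mathscr{L}_{j}\simeq\coprod_{k}\mathscr{N}_{k}$, I would apply Zorn's lemma to the poset of partial injections $\sigma$ defined on subsets of $\mathtt{J}$ for which $\mathscr{L}_{j}\simeq\mathscr{N}_{\sigma(j)}$ and for which the matched summands together with the still-unmatched $\mathscr{N}_{k}$ recover the whole object; maximality, together with the exchange property applied to any hypothetically unmatched $\mathscr{L}_{j}$, forces the domain of $\sigma$ to be all of $\mathtt{J}$, and symmetrically its image to be all of $\mathtt{K}$.

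For part (2), write the given section as a splitting $\coprod_{j}\mathscr{L}_{j}\simeq\mathscr{P}\amalg\mathscr{Q}$. Each $\mathscr{L}_{j}$ is an indecomposable injective object, hence has local endomorphism ring, so the exchange property is available once more. The idea is to peel off the indecomposable summands $\mathscr{L}_{j}$ one at a time, at each stage exchanging a chosen $\mathscr{L}_{j}$ either into $\mathscr{P}$, in which case $j$ is discarded, or into the evolving complement, in which case $j$ is placed into $\mathtt{H}$; the hypothesis that each $\mathscr{L}_{j}$ is an indecomposable injective hull is what guarantees that the residual object can again be recognised as a coproduct of copies among the $\mathscr{L}_{j}$, so a transfinite induction packaged as a Zorn's lemma argument produces the subset $\mathtt{H}\subseteq\mathtt{J}$ with $\coprod_{j}\mathscr{L}_{j}\simeq\mathscr{P}\amalg\coprod_{j\in\mathtt{H}}\mathscr{L}_{j}$.

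The main obstacle — and the reason I would ultimately just quote \cite{Pre1988} rather than reproduce the argument in full — is the bookkeeping required in the infinite case: setting up the Zorn poset so that a maximal element genuinely exhausts the index set, and verifying that the required exchanges can be carried out consistently rather than merely finitely often. These technicalities are exactly what \cite[p.~82]{Pre1988} settles, so in the paper I would simply invoke \cite[Theorem 4.A7]{Pre1988} for (1) and \cite[Theorem 4.A11]{Pre1988} for (2).
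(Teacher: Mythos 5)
Your proposal matches the paper exactly: Theorem \ref{REMAK} is presented there purely as a recollection of \cite[p.82, Theorems 4.A7 and 4.A11]{Pre1988}, with no proof supplied, so simply invoking those references is precisely what the paper does. Your supplementary sketch of the exchange-property and Zorn's-lemma arguments is a correct outline of the classical Azumaya-style proofs (valid in the intended application to the Grothendieck category $\mathbf{Mod}\text{-}\mathcal{T}^{c}$) and adds nothing that conflicts with the paper.
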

To apply Theorem \ref{REMAK} we use the following observation of Garkusha and Prest. 
\begin{lemma}\label{pithenindifflocalendo}\emph{\cite[Lemma 2.2]{GarPre2005}} A pure-injective object $M$ of $\mathcal{T}$ is indecomposable if and only if the endomorphism ring $\mathrm{End}_{\mathcal{T}}(M)$ is local.
\end{lemma}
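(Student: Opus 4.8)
The plan is to move the statement across the restricted Yoneda functor into the abelian category $\mathbf{Mod}\text{-}\mathcal{T}^{c}$, where it reduces to a standard fact about injective objects in a Grothendieck category.

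By Corollary \ref{kraequivpi} the functor $\mathbf{Y}$ restricts to an equivalence between the full subcategory of $\mathcal{T}$ consisting of pure-injective objects and the full subcategory of $\mathbf{Mod}\text{-}\mathcal{T}^{c}$ consisting of injective objects. Being fully faithful on pure-injectives, $\mathbf{Y}$ therefore induces a ring isomorphism $\mathrm{End}_{\mathcal{T}}(M)\to\mathrm{End}_{\mathbf{Mod}\text{-}\mathcal{T}^{c}}(\mathbf{Y}(M))$; and since direct summands of pure-injective (resp.\ injective) objects are again pure-injective (resp.\ injective), and $\mathcal{T}$ and $\mathbf{Mod}\text{-}\mathcal{T}^{c}$ are idempotent complete, the same equivalence shows (as already remarked after Corollary \ref{kraequivpi}) that $M$ is indecomposable if and only if the injective object $\mathbf{Y}(M)$ is. So it suffices to prove that an injective object $\mathscr{E}$ of the Grothendieck category $\mathbf{Mod}\text{-}\mathcal{T}^{c}$ (see Remark \ref{locallycoherentremark}) is indecomposable if and only if $\mathrm{End}(\mathscr{E})$ is local.

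The backward implication needs no injectivity: a local ring contains no idempotents besides $0$ and $1$, so $\mathscr{E}$ admits no decomposition into two nonzero summands, and $\mathscr{E}\neq 0$ because the zero ring is not local. For the forward implication I would first observe that an indecomposable injective $\mathscr{E}$ is uniform --- two nonzero subobjects with zero intersection would, on passing to injective hulls inside $\mathscr{E}$ and using essentiality, produce a proper nonzero injective, hence split, subobject --- and then that consequently every monomorphism $\mathscr{E}\to\mathscr{E}$ is an isomorphism, its image being an injective summand and thus all of $\mathscr{E}$. It then follows that the non-automorphisms of $\mathscr{E}$ form a two-sided ideal: this set is closed under composition with arbitrary endomorphisms (if $fg$ or $gf$ were invertible then $f$ would be a split epimorphism or a split monomorphism, hence monic, hence an isomorphism by indecomposability), and closed under addition (if $f$ and $1-f$ were both non-automorphisms then neither would be monic, so $\ker f$ and $\ker(1-f)$ are nonzero, while $\ker f\cap\ker(1-f)=0$ since $f+(1-f)=1$ restricts to $0$ on that intersection, contradicting uniformity). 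As this ideal is precisely the set of non-units, $\mathrm{End}(\mathscr{E})$ is local.

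There is no serious obstacle here: once one is inside $\mathbf{Mod}\text{-}\mathcal{T}^{c}$ the argument is classical, the only delicate points being that indecomposable injectives in a Grothendieck category are uniform and the closure-under-addition step for the ideal of non-automorphisms. The key move, which avoids having to argue directly inside the triangulated category $\mathcal{T}$, is that the equivalence of Corollary \ref{kraequivpi} transports indecomposability and the endomorphism ring simultaneously.
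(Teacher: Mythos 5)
The paper offers no proof of this lemma: it is imported verbatim as a citation to Garkusha and Prest \cite[Lemma 2.2]{GarPre2005}, so there is no in-text argument to compare yours against. Your proof is nevertheless correct and follows the route one would expect the cited source to take: transport the statement along the equivalence of Corollary \ref{kraequivpi}, which identifies $\mathrm{End}_{\mathcal{T}}(M)$ with $\mathrm{End}_{\mathbf{Mod}\text{-}\mathcal{T}^{c}}(\mathbf{Y}(M))$ and matches up indecomposability, and then prove the classical statement for an injective object $\mathscr{E}$ of the Grothendieck category $\mathbf{Mod}\text{-}\mathcal{T}^{c}$ via uniformity. The only place where the wording does not quite support the claim is the closure-under-composition step: you assert that if $fg$ or $gf$ is invertible then $f$ is ``a split epimorphism or a split monomorphism, hence monic,'' but a split epimorphism is not in general monic. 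The repair is one line: if $f$ is a split epimorphism with splitting $h$, then $h$ is a split monomorphism, hence monic, hence an automorphism by your earlier step that every monomorphism $\mathscr{E}\to\mathscr{E}$ is invertible, and so $f=h^{-1}$ is an automorphism too (equivalently, $hf$ is an idempotent, hence $0$ or $1$ by indecomposability, and $hf=0$ would give $f=(fh)f=f(hf)=0$, contradicting $fh=1$). With that clause tightened, the argument is complete; everything else, including the uniformity of indecomposable injectives via injective hulls and the addition step using $\ker f\cap\ker(1-f)=0$, is sound.
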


\begin{corollary}\label{decompcorr}Let $\mathrm{L}=\{L_{j}\mid j\in \mathtt{J}\}$ be a collection of indecomposable objects in $\mathcal{T}$ such that the coproduct $\coprod _{j\in \mathtt{J}}L_{j}$ is pure injective.  If $P$ is a summand of $\coprod _{j\in \mathtt{J}}L_{j}$ then there exists $\mathtt{H}\subseteq \mathtt{J}$ such that $\coprod _{j\in \mathtt{J}}L_{j}\simeq P\amalg \coprod _{j\in \mathtt{H}}L_{j}$. 

If additionally $P\simeq \coprod_{k\in \mathtt{K}} N_{k}$ where each $N_{k}$ is indecomposable, then there is a bijection $\sigma\colon \mathtt{J}\to \mathtt{K}\sqcup \mathtt{H}$ with $L_{j}\simeq N_{\sigma(j)}$ for all $j\in \mathtt{J}$.
\end{corollary}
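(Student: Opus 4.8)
The plan is to transport the whole statement through the restricted Yoneda functor $\mathbf{Y}$ into the locally coherent Grothendieck category $\mathbf{Mod}\text{-}\mathcal{T}^{c}$, apply the Krull--Remak--Schmidt--Azumaya type results collected in Theorem \ref{REMAK}, and then transport the conclusions back to $\mathcal{T}$.

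First I would record that every object in sight is pure-injective with local endomorphism ring. Write $K=\coprod_{j\in\mathtt{J}}L_{j}$. Each $L_{j}$, and also $P$, is a direct summand of the pure-injective object $K$; applying the additive functor $\mathbf{Y}$ and using that $\mathbf{Y}(K)$ is injective (Theorem \ref{krachar}) shows $\mathbf{Y}(L_{j})$ and $\mathbf{Y}(P)$ are injective, so $L_{j}$ and $P$ are pure-injective by Theorem \ref{krachar} again; being indecomposable and pure-injective, each $L_{j}$ then has local endomorphism ring by Lemma \ref{pithenindifflocalendo}, and $\mathbf{Y}(L_{j})$ is an indecomposable injective object of $\mathbf{Mod}\text{-}\mathcal{T}^{c}$ by Corollary \ref{kraequivpi}. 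By Remark \ref{yonedacoprod} we have $\mathbf{Y}(K)\simeq\coprod_{j}\mathbf{Y}(L_{j})$, and since $P$ is a summand of $K$ there is a section $\mathbf{Y}(P)\to\mathbf{Y}(K)$.

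Next I would apply Theorem \ref{REMAK}(2) inside $\mathbf{Mod}\text{-}\mathcal{T}^{c}$, with $\mathscr{L}_{j}=\mathbf{Y}(L_{j})$, with $\mathscr{N}_{j}=\mathscr{L}_{j}$ (so the hypothesis that $\mathscr{L}_{j}$ be the injective hull of $\mathscr{N}_{j}$ holds trivially, each $\mathscr{L}_{j}$ being injective), and with $\mathscr{P}=\mathbf{Y}(P)$. This yields a subset $\mathtt{H}\subseteq\mathtt{J}$ with $\coprod_{j\in\mathtt{J}}\mathbf{Y}(L_{j})\simeq\mathbf{Y}(P)\amalg\coprod_{j\in\mathtt{H}}\mathbf{Y}(L_{j})$, which by Remark \ref{yonedacoprod} reads $\mathbf{Y}(K)\simeq\mathbf{Y}\big(P\amalg\coprod_{j\in\mathtt{H}}L_{j}\big)$. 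The right-hand side is thus injective, so $P\amalg\coprod_{j\in\mathtt{H}}L_{j}$ is pure-injective by Theorem \ref{krachar}; since $\mathbf{Y}$ is fully faithful on pure-injectives by Corollary \ref{kraequivpi} it reflects this isomorphism, giving $K\simeq P\amalg\coprod_{j\in\mathtt{H}}L_{j}$, which is the first assertion. For the second, substituting $P\simeq\coprod_{k\in\mathtt{K}}N_{k}$ exhibits $\coprod_{j\in\mathtt{J}}L_{j}$ as a second coproduct of indecomposables indexed by $\mathtt{K}\sqcup\mathtt{H}$ (each $N_{k}$ is a summand of $P$, hence of $K$, hence pure-injective with local endomorphism ring by Lemma \ref{pithenindifflocalendo}); Theorem \ref{REMAK}(1) then supplies the bijection $\sigma\colon\mathtt{J}\to\mathtt{K}\sqcup\mathtt{H}$ with $L_{j}\simeq N_{\sigma(j)}$, reading $N_{l}=L_{l}$ for $l\in\mathtt{H}$.

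The point needing the most care is the order of the steps in the previous paragraph, rather than any single hard lemma: a small coproduct of pure-injective (equivalently injective) objects need not itself be pure-injective, so one may not conclude directly that $P\amalg\coprod_{j\in\mathtt{H}}L_{j}$ is pure-injective from the fact that its summands are. One must first obtain the abstract isomorphism with $\mathbf{Y}(K)$ in the functor category from Theorem \ref{REMAK}(2), deduce injectivity of $\mathbf{Y}\big(P\amalg\coprod_{j\in\mathtt{H}}L_{j}\big)$ from that, and only then invoke Theorem \ref{krachar} and the full faithfulness of $\mathbf{Y}$ on pure-injectives. Verifying the hypotheses of Theorem \ref{REMAK}(2) is otherwise immediate here precisely because the $\mathbf{Y}(L_{j})$ are already injective.
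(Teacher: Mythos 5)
Your proposal is correct and follows essentially the same route as the paper: transport everything through $\mathbf{Y}$ into $\mathbf{Mod}\text{-}\mathcal{T}^{c}$, apply Theorem \ref{REMAK}(2) and then \ref{REMAK}(1) to the injective indecomposables $\mathbf{Y}(L_{j})$, and reflect the isomorphisms back via Corollary \ref{kraequivpi} and Remark \ref{yonedacoprod}. Your explicit handling of the injective-hull hypothesis in Theorem \ref{REMAK}(2) (taking $\mathscr{N}_{j}=\mathscr{L}_{j}$) and of the order in which pure-injectivity of $P\amalg\coprod_{j\in\mathtt{H}}L_{j}$ is deduced are points the paper leaves implicit, but they do not constitute a different argument.
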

\begin{proof}Let $\mathscr{L}_{j}=\mathbf{Y}(L_{j})$ for each $j$. Let $L_{\mathtt{T}}=\coprod_{j\in \mathtt{T}}L_{j}$ and $\mathscr{L}_{\mathtt{T}}=\coprod_{j\in \mathtt{T}}\mathscr{L}_{j}$ for any subset $\mathtt{T}\subseteq \mathtt{J}$. Since $\mathbf{Y}$ preserves small coproducts by Remark \ref{yonedacoprod}, and since each $L_{j}$ is a summand of $L_{\mathtt{J}}$, each $\mathscr{L}_{j}$ is a summand of $\mathscr{L}_{\mathtt{J}}$. By Theorem \ref{krachar2} and  Remark \ref{yonedacoprod} $\mathscr{L}_{\mathtt{J}}$ is an injective object. Thus each $\mathscr{L}_{j}$ is injective. Since each $L_{j}$ is indecomposable, each $\mathscr{L}_{j}$ is indecomposable. Similarly, each $N_{k}$ is pure-injective.

Let $\mathscr{P}=\mathbf{Y}(P)$. Since $P$ is a direct summand of $L_{\mathtt{J}}$ by assumption,  $\mathscr{P}$ is a direct summand of $\mathscr{L}_{\mathtt{J}}$. Hence by Theorem \ref{REMAK}(2) there exists a subset $\mathtt{H}\subseteq \mathtt{J}$ such that $\mathscr{L}_{\mathtt{J}}\simeq \mathscr{P}\amalg \mathscr{L}_{\mathtt{H}}$. By Corollary \ref{kraequivpi}, since we assume $L_{\mathtt{J}}$ is pure injective, $\mathscr{L}_{\mathtt{J}}$ is injective, and hence so too are $\mathscr{P}$ and $\mathscr{L}_{\mathtt{H}}$. Again, by Corollary \ref{kraequivpi} this gives $L_{\mathtt{J}}\simeq P\amalg L_{\mathtt{H}}$.

Now suppose also $P\simeq \coprod_{k\in \mathtt{K}} N_{k}$ where each $N_{k}$ is indecomposable and, as above, neccesarilly pure-injective. By Lemma \ref{pithenindifflocalendo} each of the objects $L_{j}$ and $N_{k}$ has a local endomorphism ring. By Corollary \ref{kraequivpi}, for any pure-injective object $Z$ of $\mathcal{T}$ there is a ring isomorphism
\[
\mathrm{End}_{\mathcal{T}}(Z)\simeq\mathrm{End}_{\mathbf{Mod}\text{-}\mathcal{T}^{c}}(\mathbf{Y}(Z)).
\]
Since $\mathscr{L}_{\mathtt{J}}\simeq \mathscr{P}\amalg \mathscr{L}_{\mathtt{H}}$, the second claim follows, as above, by Theorem \ref{REMAK}(1), using again that $\mathbf{Y}$ preserves coproducts by Remark \ref{yonedacoprod}. 
\end{proof}
Lemma \ref{zimmtech3} is analogous to the cited result of Huisgen-Zimmerman.
\begin{lemma}\label{zimmtech3}\emph{\cite[Lemma 5]{Hui1979}} Recall and consider Notation \ref{techynotation}, where $M=\prod_{i\in\mathbb{N}}M_{i}=\coprod_{j\in \mathtt{J}}L_{j}$. Let $X$ be compact and let $\varphi_{1}(M)\supseteq \varphi_{2}(M) \supseteq \dots$ be a descending chain of pp-definable subgroups of sort $X$. Suppose additionally that each object $L_{j}$ is both pure-injective and indecomposable.

Then there exists $r\in\mathbb{N}$ such that, for each collection $\mathrm{N}=\{N_{i}\mid i\in\mathbb{N}\}$ where $N_{i}$ is an indecomposable summand of $M_{i}$, we have $\varphi_{r}(N_{i})=\varphi_{n}(N_{i})$ for all $n\in\mathbb{N}$ with $n\geq r$ and all but finitely many $i\in \mathbb{N}$.
\end{lemma}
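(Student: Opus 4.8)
The plan is to reduce the statement to Lemma \ref{zimmtech2}. Since every compact object is isomorphic to one lying in $\mathcal{S}$, we may assume $X\in\mathcal{S}$, so that Lemma \ref{zimmtech2} applies to the given chain $\varphi_{1}(M)\supseteq\varphi_{2}(M)\supseteq\cdots$ and the presentation $M=\prod_{i}M_{i}=\coprod_{j}L_{j}$. This yields some $r\in\mathbb{N}$ and a \emph{finite} subset $\mathtt{J}'\subseteq\mathtt{J}$ with $\mathrm{im}(\rho_{r,j})\subseteq\varphi_{n}(L_{j})$ for all $n\geq r$ and all $j\notin\mathtt{J}'$, and I claim this $r$ works; the finitely many indices $i<r$ will be absorbed into the exceptional set.

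The first main step is an element chase carrying this information from the coproduct summands $L_{j}$ to an arbitrary choice of indecomposable summands $N_{i}\le M_{i}$, with splittings $\iota_{i}\colon N_{i}\to M_{i}$ and $e_{i}\colon M_{i}\to N_{i}$, $e_{i}\iota_{i}=1_{N_{i}}$. Given $f\in\varphi_{r}(N_{i})$ with $i\geq r$, postcomposition gives $v_{i,\mathrm{M}}\iota_{i}f\in\varphi_{r}(M)$; since $X$ is compact this element of $\mathcal{T}(X,M)=\coprod_{j}\mathcal{T}(X,L_{j})$ has finite support, and by Lemma \ref{ppprops2} its $j$-th coordinate $g_{j}=q_{j,\mathrm{L}}v_{i,\mathrm{M}}\iota_{i}f$ lies in $\varphi_{r}(L_{j})$; reading off the formula in Corollary \ref{zimmtech}, this $g_{j}$ is precisely the image of $f$ under $\rho_{r,j}$ applied to the tuple with $\iota_{i}f$ in slot $i$ and zeros elsewhere, so Lemma \ref{zimmtech2} gives $g_{j}\in\varphi_{n}(L_{j})$ for all $n\geq r$ whenever $j\notin\mathtt{J}'$. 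Applying $e_{i}p_{i,\mathrm{M}}$ to $v_{i,\mathrm{M}}\iota_{i}f=\sum_{j}u_{j,\mathrm{L}}g_{j}$ then writes $f=f'+c_{i}$, where $f'=\sum_{j\notin\mathtt{J}'}e_{i}p_{i,\mathrm{M}}u_{j,\mathrm{L}}g_{j}\in\varphi_{n}(N_{i})$ for every $n\geq r$, while $c_{i}=e_{i}p_{i,\mathrm{M}}w$ with $w=\sum_{j\in\mathtt{J}'}u_{j,\mathrm{L}}g_{j}$ lying in the finite sub-coproduct $P:=\coprod_{j\in\mathtt{J}'}L_{j}\subseteq M$. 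Thus the failure of $\varphi_{r}(N_{i})=\varphi_{n}(N_{i})$ is governed entirely by the composite $\varphi_{r}(N_{i})\to\varphi_{r}(P)\to\varphi_{r}(N_{i})$ induced by the idempotent of $M$ with image $P$.

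To finish I would argue by contradiction: assuming infinitely many indices $i$ with some $f_{i}\in\varphi_{r}(N_{i})\setminus\varphi_{n_{i}}(N_{i})$, one assembles all the $\iota_{i}f_{i}$ into a single element of $\prod_{k}\varphi_{r}(M_{k})$ and runs the chase above once, producing a \emph{single} $w\in P$ with $e_{i}p_{i,\mathrm{M}}w\notin\varphi_{n_{i}}(N_{i})$ for infinitely many $i$. Since $P$ is a finite coproduct of pure-injective indecomposable objects it is pure-injective, and its indecomposable summands have local endomorphism rings by Lemma \ref{pithenindifflocalendo}; so the exchange and Krull--Remak--Schmidt statements in Corollary \ref{decompcorr} and Theorem \ref{REMAK} force every $N_{i}$ interacting nontrivially with $P$ to be isomorphic to one of the finitely many $L_{j}$, $j\in\mathtt{J}'$. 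Re-applying Lemma \ref{zimmtech2} to a decomposition of $P^{(\mathbb{N})}$ — whose indecomposable constituents involve only those finitely many isomorphism types — and iterating, one traps all but finitely many of the remaining bad indices, and a Remak-type count over those finitely many types bounds the exceptional set.

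The hard part is exactly this last step. Lemma \ref{zimmtech2} only traps $\mathrm{im}(\rho_{r,j})$ below $\varphi_{n}(L_{j})$, and in general $\mathrm{im}(\rho_{r,j})\subsetneq\varphi_{r}(L_{j})$, so one cannot conclude directly that the chains $\varphi_{\bullet}(L_{j})$ stabilise even for the finitely many $j\in\mathtt{J}'$; pinning these down is what requires the exchange argument together with the re-application of Lemma \ref{zimmtech2} to successively smaller decompositions. A secondary technical point is that the relevant summands must carry local endomorphism rings: this is automatic in the applications of Lemma \ref{zimmtech3}, where $M$ is a product of a pure-injective object, so that $M$, and hence each $M_{i}$ and each $N_{i}$, is pure-injective by Corollary \ref{kraequivpi} together with the fact that $\mathbf{Y}$ preserves products.
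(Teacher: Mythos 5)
Your first step (invoking Lemma \ref{zimmtech2} to obtain $r$ and the finite set $\mathtt{J}'$) and your element chase splitting each $f\in\varphi_{r}(N_{i})$ as $f=f'+c_{i}$ with $f'\in\bigcap_{n\geq r}\varphi_{n}(N_{i})$ and $c_{i}$ factoring through $P=\coprod_{j\in\mathtt{J}'}L_{j}$ are both sound. But the proof is not complete: everything hinges on controlling the error terms $c_{i}$, and the step you offer for this is not an argument. Lemma \ref{zimmtech2} is stated for the \emph{fixed} chain $\varphi_{1}(M)\supseteq\varphi_{2}(M)\supseteq\dots$ relative to the \emph{fixed} presentation $M=\prod_{i}M_{i}=\coprod_{j}L_{j}$; it cannot be ``re-applied to a decomposition of $P^{(\mathbb{N})}$,'' since $P^{(\mathbb{N})}$ is not presented as such a product-equals-coproduct carrying the given chain, and even if one set up an analogous situation there is no termination argument for the proposed iteration. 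As you yourself observe, Lemma \ref{zimmtech2} gives no information about $\varphi_{n}(L_{j})$ for $j\in\mathtt{J}'$, so the chain on those finitely many summands is genuinely uncontrolled by your method; the concluding ``Remak-type count'' is a placeholder, not a proof.

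The idea you are missing is that one should not analyse the error term at all, but instead \emph{re-decompose}. Fix $m>\vert\mathtt{J}'\vert$ and any distinct indices $i_{1},\dots,i_{m}\geq r$, and set $P=N_{i_{1}}\amalg\dots\amalg N_{i_{m}}$, a summand of $M=\coprod_{j}L_{j}$. Corollary \ref{decompcorr} (the exchange/Krull--Remak--Schmidt statement, using that the $L_{j}$ and $N_{i_{p}}$ are indecomposable pure-injectives with local endomorphism rings by Lemma \ref{pithenindifflocalendo}) matches the $N_{i_{p}}$ with \emph{distinct} summands $L_{j}$ of the original coproduct decomposition. Since $m>\vert\mathtt{J}'\vert$, at least one $N_{i_{p}}$ is identified with some $L_{j}$ with $j\notin\mathtt{J}'$; for that index, embedding $\varphi_{r}(N_{i_{p}})$ into $\prod_{i\geq r}\varphi_{r}(M_{i})$ (supported in slot $i_{p}$) and applying $\rho_{r,j}$ shows $\varphi_{r}(N_{i_{p}})\subseteq\varphi_{n}(N_{i_{p}})$ for all $n\geq r$, i.e.\ that index is ``good.'' A pigeonhole count then bounds the number of bad indices $i\geq r$ by $\vert\mathtt{J}'\vert$, which is exactly the finiteness you need. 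This is the route the paper takes, and it bypasses entirely the problem of stabilising the chains $\varphi_{\bullet}(L_{j})$ for $j\in\mathtt{J}'$ on which your sketch founders.
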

\begin{proof}Recall the morphisms $\rho_{n,j}=q_{j,\Psi(n)}\kappa\langle\varphi_{n}\rangle u_{\geq,\Pi(n)}$ defined by the composition
\[
\xymatrix{\prod_{i\geq n}\varphi_{n}(M_{i})\ar[rr]^{u_{\geq,\Pi(n)}} & & \prod_{i\in\mathbb{N}}\varphi_{n}(M_{i})\ar[r]^{\kappa\langle\varphi_{n}\rangle} & \coprod_{j\in \mathtt{J}}\varphi_{n}(L_{j})\ar[rr]^{q_{j,\Psi(n)}} & & \varphi_{n}(L_{j}).
}
\]
Here the isomorphism $\kappa\langle\varphi_{n}\rangle$ is defined in Corollary \ref{zimmtech}, and the maps $u_{\geq,\Pi(n)}$ and $q_{j,\Psi(n)}$ are the canonical inclusion and projection respectively. By Lemma \ref{zimmtech2} there exists $r\in\mathbb{N}$ and a finite subset $\mathtt{J}'$ of $\mathtt{J}$ such that we have the containment $\mathrm{im}(\rho_{r,j})\subseteq \varphi_{n}(L_{j})$ for all $n\in\mathbb{N}$ with $n\geq r$ and all $j\in \mathtt{J}\setminus \mathtt{J}'$. 

Choose arbitrary  $m\in\mathbb{N}$ with $m>\vert \mathtt{J}'\vert$, and choose arbitrary $i_{1},\dots,i_{m}\in\mathbb{N}$ with $i_{p}\geq r$ for each $p$. Let $P=N_{i_{1}}\amalg \dots \amalg N_{i_{m}}$ so that $P$ is a summand of $M=\coprod_{j}L_{j}$. By Corollary \ref{decompcorr}, since $P$ is a coproduct of $m$  indecomposable pure-injective objects, we have $M\simeq P\amalg \coprod_{j\in \mathtt{K}}L_{j}$ and a bijection $\sigma\colon \mathtt{J}\to \mathtt{K}\sqcup\{i_{1},\dots,i_{m}\}$  with $L_{j}\simeq N_{\sigma(j)}$ for all $j\in \mathtt{J}$. 

Since $m>\vert \mathtt{J}'\vert$ there exists $p=1,\dots,m$ such that $\sigma^{-1}(i_{p})\notin \mathtt{J}'$. Let $j=\sigma^{-1}(i_{p})$. It is straightforward to check that post composition with the isomorphism $L_{j}\simeq N_{\sigma(j)}$ defines an isomorphism $\varphi_{n}(L_{j})\simeq \varphi_{n}(N_{\sigma(j)})$ for each $n\in\mathbb{N}$. Since $N_{\sigma(j)}$ is a summand of $M_{\sigma(j)}$ and $i_{p}\geq r$ the group $\varphi_{n}(N_{\sigma(j)})$ embedds into $\prod_{i\geq r}\varphi_{r}(M_{i})$. Considering the image under $\rho_{r,j}$ the result follows by Lemma \ref{zimmtech2} as in the proof of \cite[Lemma 5]{Hui1979}.
\end{proof}
\begin{corollary}\label{CARDtensigma}Let $M$ be a pure-injective object of $\mathcal{T}$ such that $M^{I}$ is a coproduct of indecomposable pure-injectives for any set $I$. Then $M$ is $\Sigma$-pure-injective.
\end{corollary}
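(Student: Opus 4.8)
The plan is to deduce $\Sigma$-pure-injectivity from the already-established equivalence of \emph{(1)} and \emph{(5)} in Theorem \ref{characterisation}, recorded in the proof of Corollary \ref{sigmapropsyanah}: it suffices to show that for every compact object $X$ of $\mathcal{T}$, every descending chain of pp-definable subgroups of $M$ of sort $X$ stabilises. So I would suppose, towards a contradiction, that $\varphi_1(M)\supsetneq\varphi_2(M)\supsetneq\cdots$ is a strictly descending chain of pp-definable subgroups of $M$ of some sort $X$, with $X$ compact; the case $M=0$ being trivial we may assume $M\neq0$.

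Next I would pass to the countable power $M^{\mathbb{N}}=\prod_{i\in\mathbb{N}}M$, writing $M_i=M$. Since the maps $\lambda_{X,\mathrm{M}}$ of Notation \ref{categnotation} are isomorphisms, $\mathbf{Y}$ preserves small products, so $\mathbf{Y}(M^{\mathbb{N}})\simeq\mathbf{Y}(M)^{\mathbb{N}}$; as $M$ is pure-injective, $\mathbf{Y}(M)$ is injective by Theorem \ref{krachar}, hence so is the product, and therefore $M^{\mathbb{N}}$ is pure-injective, again by Theorem \ref{krachar}. By the hypothesis applied with $\mathtt{I}=\mathbb{N}$ we may write $M^{\mathbb{N}}=\coprod_{j\in\mathtt{J}}L_j$ with each $L_j$ indecomposable pure-injective, and by Lemma \ref{ppprops2} the chain $\varphi_n(M^{\mathbb{N}})\simeq\prod_{i\in\mathbb{N}}\varphi_n(M)$ is again strictly descending. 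Thus the data $M^{\mathbb{N}}=\prod_{i\in\mathbb{N}}M_i=\coprod_{j\in\mathtt{J}}L_j$ together with the chain $\bigl(\varphi_n(M^{\mathbb{N}})\bigr)_n$ satisfy the hypotheses of Lemma \ref{zimmtech3}, producing an integer $r\in\mathbb{N}$ with the following uniform stabilisation property: for \emph{every} collection $\mathrm{N}=\{N_i\mid i\in\mathbb{N}\}$ with each $N_i$ an indecomposable summand of $M_i=M$, one has $\varphi_r(N_i)=\varphi_n(N_i)$ for all $n\geq r$ and all but finitely many $i$.

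The crucial point is that this single integer $r$ is independent of the chosen collection. Applying the hypothesis with $\mathtt{I}=\{0\}$, write $M=\coprod_{k\in\mathtt{K}}P_k$ with each $P_k$ indecomposable pure-injective; each $P_k$ is then an indecomposable summand of $M$. Fixing $k$ and applying the uniform property to the constant collection $N_i=P_k$, the equality $\varphi_r(P_k)=\varphi_n(P_k)$ does not involve $i$ yet holds for infinitely many $i$, so it holds outright, for every $n\geq r$. Since this occurs for all $k\in\mathtt{K}$ with the same $r$, and Lemma \ref{ppprops2} (applied to $M=\coprod_k P_k$) identifies $\varphi_m(M)$ with $\coprod_k\varphi_m(P_k)$ for each $m$ via one and the same isomorphism, we conclude $\varphi_r(M)=\varphi_n(M)$ for all $n\geq r$, contradicting the strictness of the original chain. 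Hence every descending chain of pp-definable subgroups of $M$ stabilises, and $M$ is $\Sigma$-pure-injective.

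All the genuine content is carried by Lemma \ref{zimmtech3} (resting in turn on the combinatorial Lemma \ref{zimmtech2} and the Krull--Remak--Schmidt-type result Corollary \ref{decompcorr}); the main obstacle is really that lemma, and in particular extracting from it that the stabilisation index $r$ can be chosen uniformly over all collections of indecomposable summands. The two minor points needing care are the verification that $M^{\mathbb{N}}$ is pure-injective (so that Corollary \ref{decompcorr}, and hence Lemma \ref{zimmtech3}, applies), and the elementary observation that evaluating the conclusion of Lemma \ref{zimmtech3} on a constant collection $N_i=P_k$ upgrades the statement from holding for all but finitely many $i$ to holding for all $i$, precisely because the asserted equality is $i$-free.
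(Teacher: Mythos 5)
Your proof is correct and follows essentially the same route as the paper: pass to $M^{\mathbb{N}}=\prod_{i}M=\coprod_{j}L_{j}$, invoke Lemma \ref{zimmtech3} to get a uniform stabilisation index $r$, and evaluate on constant collections to force $\varphi_{r}=\varphi_{n}$ for $n\geq r$. The one (minor but welcome) difference is that you feed Lemma \ref{zimmtech3} the summands $P_{k}$ coming from the hypothesis with $\mathtt{I}=\{0\}$, which are manifestly indecomposable summands of $M$ as that lemma requires, whereas the paper uses the $L_{j}$ directly; you also make explicit the check that $M^{\mathbb{N}}$ is pure-injective, which the paper leaves implicit.
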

\begin{proof}Let $K=M^{\mathbb{N}}$ and $M_{i}=M$ for each $i\in \mathbb{N}$, so that $K=\prod_{i\in\mathbb{N}}M_{i}$. By hypothesis we have $K=\coprod_{j\in \mathtt{J}}L_{j}$ where each $L_{j}$ is an indecomposable pure-injective object of $\mathcal{T}$. By Lemma \ref{zimmtech3} there exists $r\in\mathbb{N}$ such that, for each collection $\mathrm{N}=\{N_{i}\mid i\in\mathbb{N}\}$ where $N_{i}$ is an indecomposable summand of $M_{i}$, we have $\varphi_{r}(N_{i})=\varphi_{n}(N_{i})$ for all $n\in\mathbb{N}$ with $n\geq r$ and all but finitely many $i\in \mathbb{N}$. Fixing $j\in \mathtt{J}$, for the collection given by $N_{i}=L_{j}$ for all $i$, we have  $\varphi_{r}(L_{j})=\varphi_{n}(L_{j})$ for all $n\in\mathbb{N}$, and $\varphi_{n}(K)\simeq\coprod_{j}\varphi_{n}(L_{j})$ for all $n$ by Lemma \ref{ppprops2}.
\end{proof}
Theorem \ref{harfai} goes back to a characterisation due to Faith \cite[Proposition 3]{Fai1966}.
\begin{theorem}\label{harfai}\emph{\cite[Theorem 1]{Har1973} (}see also \emph{\cite[Lemma 1.1]{GarDun1994}).} Let $\mathscr{M}$ be an injective object in a Grothendieck category $\mathcal{A}$ which has a set $\mathtt{G}$ of finitely generated generators. Then the following statements are equivalent.
\begin{enumerate}
\item $\mathscr{M}$ is $\Sigma$-injective.
\item The countable coproduct $\mathscr{M}^{(\mathbb{N})}$ is injective.
\item For each $\mathscr{G}\in\mathtt{G}$, any ascending chain of $\mathscr{M}$-annihilator subobjects of $\mathscr{G}$ eventually stabilises.
\end{enumerate}
\end{theorem}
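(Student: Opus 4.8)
The plan is to prove the cycle $(1)\Rightarrow(2)\Rightarrow(3)\Rightarrow(1)$. The implication $(1)\Rightarrow(2)$ holds because $\mathbb{N}$ is a set. For $(3)\Rightarrow(1)$, note that an injective object is automatically fp-injective; hence when the generators in $\mathtt{G}$ may be taken finitely presented — the situation relevant here, for instance in the locally coherent category $\mathbf{Mod}\text{-}\mathcal{T}^{c}$ of Remark \ref{locallycoherentremark} — this implication is immediate from Proposition \ref{fpinjgd}. In the stated generality, with merely finitely generated generators, $(3)\Rightarrow(1)$ is the classical argument of Faith and Harada; since we will only apply the result in the finitely presented setting one may simply quote it, but its shape is sketched at the end. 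The genuinely new content is $(2)\Rightarrow(3)$.

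For $(2)\Rightarrow(3)$ I argue by contradiction: suppose $\mathscr{M}^{(\mathbb{N})}$ is injective but some $\mathscr{G}\in\mathtt{G}$ carries a strictly ascending chain $\mathscr{P}_{1}\subsetneq\mathscr{P}_{2}\subsetneq\cdots$ of $\mathscr{M}$-annihilator subobjects. Enlarging each defining family to $\mathtt{K}_{n}=\{\mathfrak{f}\in\mathcal{A}(\mathscr{G},\mathscr{M})\mid\mathscr{P}_{n}\leq\ker\mathfrak{f}\}$ leaves $\mathscr{P}_{n}=\bigcap_{\mathfrak{f}\in\mathtt{K}_{n}}\ker\mathfrak{f}$ unchanged and gives $\mathtt{K}_{1}\supseteq\mathtt{K}_{2}\supseteq\cdots$; since $\mathscr{P}_{n}\neq\mathscr{P}_{n+1}$ we may choose $\mathfrak{f}_{n}\in\mathtt{K}_{n}\setminus\mathtt{K}_{n+1}$, so $\mathscr{P}_{n}\leq\ker\mathfrak{f}_{n}$ but $\mathscr{P}_{n+1}\not\leq\ker\mathfrak{f}_{n}$. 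Let $\mathfrak{f}\colon\mathscr{G}\to\mathscr{M}^{\mathbb{N}}$ be the morphism with components $\mathfrak{f}_{n}$, let $\mathscr{Q}\leq\mathscr{G}$ be the pullback of the canonical monomorphism $\mathscr{M}^{(\mathbb{N})}\hookrightarrow\mathscr{M}^{\mathbb{N}}$ along $\mathfrak{f}$, and let $\mathfrak{f}'\colon\mathscr{Q}\to\mathscr{M}^{(\mathbb{N})}$ be the induced morphism. First one checks $\mathscr{P}_{n}\leq\mathscr{Q}$ for all $n$: for $m\geq n$ one has $\mathscr{P}_{n}\leq\mathscr{P}_{m}\leq\ker\mathfrak{f}_{m}$, so the restriction of $\mathfrak{f}$ to $\mathscr{P}_{n}$ has all but finitely many components zero, hence factors through a finite subcoproduct and so through $\mathscr{M}^{(\mathbb{N})}$; the universal property of the pullback then gives $\mathscr{P}_{n}\leq\mathscr{Q}$.

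Since $\mathscr{M}^{(\mathbb{N})}$ is injective, $\mathfrak{f}'$ extends along $\mathscr{Q}\hookrightarrow\mathscr{G}$ to some $\widetilde{\mathfrak{f}}\colon\mathscr{G}\to\mathscr{M}^{(\mathbb{N})}$. As $\mathscr{G}$ is finitely generated, $\im(\widetilde{\mathfrak{f}})$ is a finitely generated subobject of $\mathscr{M}^{(\mathbb{N})}$ and therefore lies in $\mathscr{M}^{(F)}$ for some finite $F\subseteq\mathbb{N}$; pick $N$ with $F\subseteq\{1,\dots,N\}$. Then the $n$-th coordinate of $\widetilde{\mathfrak{f}}$ is zero for $n>N$, whereas on $\mathscr{Q}$ the $n$-th coordinate of $\widetilde{\mathfrak{f}}$ equals $\mathfrak{f}_{n}$ (because $\widetilde{\mathfrak{f}}$ restricts to $\mathfrak{f}'$ and composing $\mathfrak{f}'$ with $\mathscr{M}^{(\mathbb{N})}\hookrightarrow\mathscr{M}^{\mathbb{N}}$ recovers the restriction of $\mathfrak{f}$ to $\mathscr{Q}$). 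Hence $\mathscr{Q}\leq\ker\mathfrak{f}_{n}$ for all $n>N$; in particular $\mathscr{P}_{N+2}\leq\mathscr{Q}\leq\ker\mathfrak{f}_{N+1}$, contradicting $\mathscr{P}_{N+2}\not\leq\ker\mathfrak{f}_{N+1}$.

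I expect the main obstacle to be the classical implication $(3)\Rightarrow(1)$ for general finitely generated generators — precisely why it is normally quoted rather than reproved. Its structure: by a Baer-type criterion it suffices to extend any $\mathfrak{g}\colon\mathscr{H}\to\mathscr{M}^{(\mathtt{I})}$ with $\mathscr{H}$ a subobject of some $\mathscr{G}\in\mathtt{G}$; pass by Zorn's lemma to a maximal extension $\mathfrak{g}^{*}\colon\mathscr{H}^{*}\to\mathscr{M}^{(\mathtt{I})}$, assume $\mathscr{H}^{*}\neq\mathscr{G}$, lift $\mathfrak{g}^{*}$ coordinatewise (using that $\mathscr{M}$ is injective) to a morphism $\mathscr{G}\to\mathscr{M}^{\mathtt{I}}$, and correct it to a morphism into the coproduct $\mathscr{M}^{(\mathtt{I})}$, which would enlarge $\mathscr{H}^{*}$ and give a contradiction. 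The delicate step is the correction: the coordinates at which the lift fails to be supported in $\mathscr{M}^{(\mathtt{I})}$ are governed by $\mathscr{M}$-annihilator subobjects of $\mathscr{G}$, and an infinite such family of coordinates forces a strictly ascending chain of them, contradicting $(3)$. Identifying the correct chain and verifying its strictness is where the work lies; once $(2)\Rightarrow(3)$ is in hand the remaining steps are formal.
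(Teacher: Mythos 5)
This theorem is not proved in the paper at all: it is quoted wholesale from Harada (\cite[Theorem 1]{Har1973}), so your proposal necessarily takes a different route, namely actually proving it. Your implication $(2)\Rightarrow(3)$ is a correct transplant of Faith's classical argument into the Grothendieck setting: enlarging the defining families to the maximal ones $\mathtt{K}_{n}$ so that strictness of the chain forces $\mathtt{K}_{n}\supsetneq\mathtt{K}_{n+1}$, assembling the chosen $\mathfrak{f}_{n}$ into a map to $\mathscr{M}^{\mathbb{N}}$, pulling back along the (monic, by AB5) canonical map $\mathscr{M}^{(\mathbb{N})}\to\mathscr{M}^{\mathbb{N}}$, and using finite generation of $\mathscr{G}$ to trap $\im(\widetilde{\mathfrak{f}})$ in a finite subcoproduct all check out, and the index bookkeeping ($\mathscr{P}_{N+2}\leq\mathscr{Q}\leq\ker\mathfrak{f}_{N+1}$ against $\mathfrak{f}_{N+1}\notin\mathtt{K}_{N+2}$) is right. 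Your reduction of $(3)\Rightarrow(1)$ to Proposition \ref{fpinjgd} is also legitimate in the only setting where the paper uses the theorem ($\mathbf{Mod}\text{-}\mathcal{T}^{c}$ with finitely presented generators, $\mathscr{M}=\mathbf{Y}(M)$ injective hence fp-injective). The one piece that remains a sketch is $(3)\Rightarrow(1)$ in the stated generality of merely finitely generated generators; there the correction step (pushing the lift $\mathscr{G}\to\mathscr{M}^{\mathtt{I}}$ back into the coproduct using the ascending chain condition on annihilators) is genuinely where the work of Faith and Harada lies, and you rightly flag that you would quote it. Since the paper itself cites the entire statement, this is an acceptable division of labour; what your version buys is a self-contained proof of the implication $(2)\Rightarrow(3)$, which is the direction the paper actually leans on (via Theorem \ref{harfai}) in the step $(2)\Rightarrow(3)$ of Theorem \ref{characterisation}.
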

Finally, we may now complete the proof of our main result.
\begin{proof}[of Theorem \ref{characterisation}] Taking $I=\mathbb{N}$ shows (1) implies (2). That (2) implies (3) follows from Remark \ref{yonedacoprod} and Theorem \ref{harfai}. That (3) implies (1) follows from Remark \ref{locallycoherentremark}, Proposition \ref{fpinjgd} and Lemma \ref{fpinjkra}. The equivalence of (1) and (4) follows from Proposition \ref{sigmapimapsdef2}. That (1) implies (5) follows from Lemma \ref{sigmathendcc}, and the converse follows from Corollaries \ref{sigmathendccconv} and \ref{sigmaequiv}. The equivalence of (1) and (6) follows from Lemma \ref{sigmathenCARD} and Corollary \ref{CARDtensigma}. 
\end{proof}
\section{Applications to endoperfection.}\label{applicationstoendoperfect}
To consider direct applications of Theorem \ref{characterisation}, recall Lemma \ref{endodefinable}, which says that pp-definable subgroups of an object $M$ of a fixed sort $X$ define $\mathrm{End}_{\mathcal{T}}(M)$-submodules of $\mathcal{T}(X,M)$. We begin by introducing the definition of an \emph{endoperfect} object; see Definition \ref{defendoperfect}. This concept is motivated by the idea of a \emph{perfect} module as in Bj{\"o}rk \cite{Bjo1969}, and generalises the \emph{finite endolength} objects defined by Krause \cite{Krause1999}, recalled in Definition \ref{defendofinite}.
\begin{definition}\label{defendoperfect}
An object $M$ of $\mathcal{T}$ is said to be \emph{endoperfect} if for all compact objects $X$ of $\mathcal{T}$ the left $\mathrm{End}_{\mathcal{T}}(M)$-module $\mathcal{T}(X,M)$ has the descending chain condition on cyclic submodules. That is, $M$ is endoperfect provided $\mathcal{T}(X,M)$ is a perfect $\mathrm{End}_{\mathcal{T}}(M)$-module in the sense of \cite[\S 1]{Bjo1969} for each compact object $X$.
\end{definition}
As the terminology suggests, any object with a perfect endomorphism is endoperfect by the famous characterisation of perfect rings due to Bass \cite[Theorem P]{Bas1960}. We now relate this to purity by adapting the notion of \emph{endonoerthian} modules to our categorical setting. This was motivated by work of Huisgen-Zimmermann and Saor{\'i}n \cite{HuiSao2002}, in which conditions for endomorphism rings are related to $\Sigma$-pure-injective modules.  
\begin{definition}\label{defendonoether}
An object $M$ of $\mathcal{T}$ is called \emph{endonoetherian} (respectively, \emph{endoartinian}) if $\mathcal{T}(X,M)$ is noetherian (respectively, artinian) over $\mathrm{End}_{\mathcal{T}}(M)$ for each compact object $X$.
\end{definition}
By using a result of Bj{\"o}rk \cite{Bjo1969} we now see a direct application of Theorem \ref{characterisation}.
\begin{corollary}\label{endonoetherperfectimpliessigmaPI}
Any endoperfect endonoetherian object of $\mathcal{T}$ is $\Sigma$-pure-injective, and hence decomposes into a coproduct of indecomposable objects with local endomorphism rings. 
\end{corollary}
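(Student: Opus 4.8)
The plan is to deduce this directly from Theorem \ref{characterisation}. Specifically, I would verify condition (5) of that theorem for $M$ --- that for every compact object $X$ of $\mathcal{T}$, each descending chain of pp-definable subgroups of $M$ of sort $X$ eventually stabilises --- and then read off both conclusions at once: that $M$ is $\Sigma$-pure-injective is then immediate from the equivalence of (1) and (5), and taking $\mathtt{I}$ to be a one-element set in the equivalent condition (6) gives that $M \simeq M^{\mathtt{I}}$ is isomorphic to a coproduct of indecomposable pure-injective objects whose endomorphism rings are local (cf.\ also Lemma \ref{pithenindifflocalendo}).

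To verify (5), fix a compact object $X$ and write $E = \mathrm{End}_{\mathcal{T}}(M)$. First I would recall, via Lemma \ref{endodefinable}, that each pp-definable subgroup of $M$ of sort $X$ is a left $E$-submodule of $\mathcal{T}(X,M)$. Next, since $M$ is endonoetherian, $\mathcal{T}(X,M)$ is a noetherian $E$-module, so every submodule of it --- in particular every pp-definable subgroup of $M$ of sort $X$ --- is finitely generated over $E$. On the other hand, $M$ being endoperfect means that $\mathcal{T}(X,M)$ has the descending chain condition on cyclic $E$-submodules, i.e.\ is perfect in the sense of \cite[\S 1]{Bjo1969}; here I would invoke the theorem of Björk that a perfect module also satisfies the descending chain condition on its finitely generated submodules. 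Combining these two observations, any descending chain of pp-definable subgroups of $M$ of sort $X$ is a descending chain of finitely generated $E$-submodules of $\mathcal{T}(X,M)$, and hence stabilises. As $X$ was an arbitrary compact object, this is precisely condition (5) of Theorem \ref{characterisation}.

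I do not expect any serious obstacle: the argument is essentially bookkeeping once the ingredients are lined up. The one place that requires care is identifying and citing the correct result of Björk --- namely that the descending chain condition on cyclic submodules upgrades to the descending chain condition on finitely generated submodules --- and matching it against the definitions of endoperfect and endonoetherian so that the noetherian hypothesis supplies finite generation of the pp-definable subgroups while the perfect hypothesis supplies the chain condition on finitely generated submodules. Everything else reduces to Lemma \ref{endodefinable} and Theorem \ref{characterisation}.
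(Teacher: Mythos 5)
Your proposal is correct and follows the paper's own proof essentially verbatim: Lemma \ref{endodefinable} to see the chain as $\mathrm{End}_{\mathcal{T}}(M)$-submodules, endonoetherianness to get finite generation, Bj\"{o}rk's theorem (the paper cites \cite[Theorem 2]{Bjo1969}) to upgrade the DCC on cyclic submodules to the DCC on finitely generated submodules, and then conditions (5) and (6) of Theorem \ref{characterisation}. No differences worth noting.
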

\begin{proof}
Let $X$ be a compact object of $\mathcal{T}$ and $\varphi_{1}(M)\supseteq \varphi_{2}(M)\supseteq \dots$ be a descending chain of pp-definable subgroups of $M$ of sort $X$. By Lemma \ref{endodefinable} this defines a descending chain of $\mathrm{End}_{\mathcal{T}}(M)$-submodules of $\mathcal{T}(X,M)$. Since $M$ is endonoetherian each submodule (of the form $\varphi_{n}(M)$) is finitely generated. Since $M$ is endoperfect the module $\mathcal{T}(X,M)$ has the descending chain condition on cyclic submodules. This means $\mathcal{T}(X,M)$ is perfect (over $\mathrm{End}_{\mathcal{T}}(M)$) in the sense of Bj{\"o}rk \cite[\S 1]{Bjo1969}. By \cite[Theorem 2]{Bjo1969} this means that the  descending chain of finitely generated submodules $\varphi_{1}(M)\supseteq \varphi_{2}(M)\supseteq \dots$ of $\mathcal{T}(X,M)$ must terminate. 

By the equivalence of (1) and (5) in Theorem \ref{characterisation} this means $M$ is $\Sigma$-pure-injective. That $M$ decomposes into indecomposables with local endomorphism rings follows by the equivalence of (1) and (6) in Theorem \ref{characterisation} (setting $\mathtt{I}=\{0\}$).
\end{proof}
Lemma \ref{cyclicendosubmodispp} was motivated by a result due to Crawley-Boevey \cite[\S 4, Lemma]{Cra1992}. 
\begin{lemma}\label{cyclicendosubmodispp}
If $M$ is a pure-injective object of $\mathcal{T}$ and $f\colon X\to M$ is a morphism in $\mathcal{T}$ with $X$ compact, then the cyclic $\mathrm{End}_{\mathcal{T}}(M)$-submodule $\mathrm{End}_{\mathcal{T}}(M)f$ of $\mathcal{T}(X,M)$ is an intersection of pp-definable subgroups of $M$ of sort $X$.
\end{lemma}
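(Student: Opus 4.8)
The plan is to express the cyclic submodule $\mathrm{End}_{\mathcal{T}}(M)f$ as the set of solutions, in the $\mathfrak{L}^{\mathcal{T}}$-structure $\mathsf{M}$, to the conjunction of all those pp-formulas of sort $X$ satisfied by $f$. More precisely, I would set $\Phi$ to be the set of pp-formulas $\varphi(v_{X})$ of sort $X$ with $f\in\varphi(M)$, and aim to show
\[
\mathrm{End}_{\mathcal{T}}(M)f=\bigcap_{\varphi\in\Phi}\varphi(M).
\]
The inclusion $\subseteq$ is the easy direction: if $\varphi\in\Phi$ then $f\in\varphi(M)$, and since $\varphi(M)$ is a left $\mathrm{End}_{\mathcal{T}}(M)$-submodule of $\mathcal{T}(X,M)$ by Lemma \ref{endodefinable}, every element $sf$ with $s\in\mathrm{End}_{\mathcal{T}}(M)$ also lies in $\varphi(M)$; hence $\mathrm{End}_{\mathcal{T}}(M)f\subseteq\bigcap_{\varphi\in\Phi}\varphi(M)$.

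The substance is the reverse inclusion. Suppose $g\in\mathcal{T}(X,M)$ satisfies every $\varphi\in\Phi$; I must produce $s\in\mathrm{End}_{\mathcal{T}}(M)$ with $g=sf$. The idea, following the module-theoretic argument of Crawley-Boevey, is to use pure-injectivity of $M$ to split a suitable pure embedding. Consider the morphism $(f,g)\colon X\to M\amalg M$ (using that $X$ is compact and $\mathcal{T}^{c}$ is triangulated, so finite coproducts behave well), or alternatively work with the pp-type of the pair: the condition that $g$ satisfies every pp-formula satisfied by $f$ says precisely that the map sending the pp-type of $f$ forward is compatible with sending it to the pp-type of $g$. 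Concretely, I would form the morphism $\binom{1}{0}\colon X\to X$-based diagram: let $\psi\colon X\to M$ be $f$ and build the "graph" subobject. The cleanest route is: since $g$ realizes the pp-type of $f$ over the empty set, there is an $\mathfrak{L}^{\mathcal{T}}$-pure embedding (equivalently, by Lemma \ref{elempure2}, a pure monomorphism) $\iota\colon\langle f\rangle\hookrightarrow M$ extending to a map sending $f\mapsto g$; because $M$ is pure-injective and hence $\mathbf{Y}(M)$ is injective (Theorem \ref{krachar}), the inclusion of the pure substructure generated by $f$ into $M$ followed by $f\mapsto g$ extends along $M\to M$, i.e. factors through an endomorphism $s$ of $M$ with $sf=g$.

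The main obstacle I anticipate is making precise the assertion that "$g$ satisfies every pp-formula of $f$" yields an actual $\mathfrak{L}^{\mathcal{T}}$-homomorphism (not merely set-theoretic compatibility) from the substructure generated by $f$ to $M$, and that this homomorphism is a pure embedding so that injectivity of $\mathbf{Y}(M)$ applies. Here one uses that pp-formulas of sort $X$ correspond to divisibility formulas $\exists u_{H}\colon v_{X}=u_{H}a$ (by \cite[Proposition 3.1]{GarPre2005}, as in the proof of Lemma \ref{elempure2}), so the pp-type of $f$ is completely recorded by the family of morphisms $a\colon X\to H$ with $f\in Ma$ in the sense that $f=f'a$ for some $f'\colon H\to M$; the substructure $\mathcal{T}(-,M)$-image generated by $f$ is then the image of $\mathbf{Y}$ applied to the relevant data, and the universal property of pure-injectivity in $\mathbf{Mod}\text{-}\mathcal{T}^{c}$ gives the extension. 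I would then transport the resulting morphism of functors back to an endomorphism of $M$ via Corollary \ref{kraequivpi}, obtaining $s\in\mathrm{End}_{\mathcal{T}}(M)$ with $\mathbf{Y}(s)\circ\mathbf{Y}(f)=\mathbf{Y}(g)$, hence $sf=g$ since $\mathbf{Y}$ is faithful on the relevant maps (or directly because $\mathbf{Y}(f)_{X}(1_{X})=f$). This completes the reverse inclusion and the proof.
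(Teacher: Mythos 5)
Your overall strategy is the same as the paper's: identify $\mathrm{End}_{\mathcal{T}}(M)f$ with the solution set of the pp-type of $f$, using Lemma \ref{endodefinable} for the easy inclusion and injectivity of $\mathbf{Y}(M)$ for the reverse one. (The paper indexes the intersection by the pp-formulas $\varphi_{\mathscr{K}}$ attached to finitely generated subobjects $\mathscr{K}$ of $\ker(\mathbf{Y}(f))$ rather than by the full pp-type $\Phi$, but these families are cofinal in one another, so this is cosmetic.) The easy inclusion as you give it is complete and correct.

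The reverse inclusion, however, is exactly where you stop short: the step you label ``the main obstacle I anticipate'' is the entire mathematical content, and it is not carried out. Concretely, given $g\in\bigcap_{\varphi\in\Phi}\varphi(M)$ you must check that the rule $\mathbf{Y}(f)_{Z}(u)\mapsto\mathbf{Y}(g)_{Z}(u)$ (for $u\in\mathcal{T}(Z,X)$, $Z$ compact) is a well-defined natural transformation $\mathrm{im}(\mathbf{Y}(f))\to\mathbf{Y}(M)$, i.e.\ that $fu=0$ implies $gu=0$. This is where the triangulated structure enters: complete $u$ to a triangle $Z\xrightarrow{u}X\xrightarrow{b}Y\to\Sigma Z$ in $\mathcal{T}^{c}$ and apply $\mathcal{T}(-,M)$ to get the exact sequence $\mathcal{T}(Y,M)\to\mathcal{T}(X,M)\to\mathcal{T}(Z,M)$; then $fu=0$ says precisely that $f\in Mb$, so $g\in Mb$ by hypothesis, so $g=g'b$ and $gu=g'bu=0$. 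This is the pseudokernel/pseudocokernel argument of \cite[Proposition 3.1]{GarPre2005}, which you cite only for the reduction of pp-formulas to divisibility formulas, not for this translation between annihilation conditions and divisibility. Finally, your worry about the embedding $\mathrm{im}(\mathbf{Y}(f))\hookrightarrow\mathbf{Y}(M)$ being \emph{pure} is a red herring: pure-injectivity of $M$ in $\mathcal{T}$ is equivalent to ordinary injectivity of $\mathbf{Y}(M)$ in $\mathbf{Mod}\text{-}\mathcal{T}^{c}$ (Theorem \ref{krachar}), so the extension exists along any monomorphism; fullness of $\mathbf{Y}$ on pure-injectives (Corollary \ref{kraequivpi}) then gives $s\in\mathrm{End}_{\mathcal{T}}(M)$ with $\mathbf{Y}(s)$ equal to the extension, and evaluating at $1_{X}$ yields $sf=g$.
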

\begin{proof}
We follow \cite[\S 1.7]{Cra1992}. Suppose $\mathscr{K}$ is a finitely generated subobject of $\mathrm{ker}(\mathbf{Y}(f))$, so that we have an exact sequence in $\mathbf{Mod}\text{-}\mathcal{T}^{c}$ of the form $\mathbf{Y}(Z)\to \mathscr{K}\to0$ with $Z$ compact. By construction the quotient $\mathbf{Y}(X)/\mathscr{K}$ is finitely presented, so there is a morphism $k\colon Z\to X$ such that $\mathbf{Y}(k)$ is the composition $\mathbf{Y}(Z)\to \mathscr{K}\subseteq \mathbf{Y}(X)$. Similarly it is straightforward to check that any morphism $\mathbf{Y}(X)/\mathscr{K}\to \mathbf{Y}(M)$ is given by a morphism $h\colon X\to M$ in $\mathcal{T}$ such that $hk=0$. 

By \cite[Proposition 3.1]{GarPre2005} this shows that the set of morphisms $\mathbf{Y}(X)/\mathscr{K}\to \mathbf{Y}(M)$ is the set of $\mathbf{Y}(h)$ where $h$ lies in a pp-definable subgroup of $M$ of sort $X$. We now follow \cite[\S 4, Lemma]{Cra1992}. Since $\mathbf{Mod}\text{-}\mathcal{T}^{c}$ is a locally finitely generated Grothendieck category we have that $\mathrm{ker}(\mathbf{Y}(f))$ is the sum $\sum\mathscr{K}$ over the finitely generated subobjects  $\mathscr{K}$ of $\mathrm{ker}(\mathbf{Y}(f))$. 
Hence the set of morphisms $\mathbf{Y}(X)/\mathrm{ker}(\mathbf{Y}(f))\to \mathbf{Y}(M)$ is the intersection over each $\mathscr{K}$ of the set of morphisms $\mathbf{Y}(X)/\mathscr{K}\to \mathbf{Y}(M)$. Altogether this shows  the set of $\mathbf{Y}(X)/\mathrm{ker}(\mathbf{Y}(f))\to \mathbf{Y}(M)$ in $\mathbf{Mod}\text{-}\mathcal{T}^{c}$ is given by the set of $\mathbf{Y}(h)$ where $h$ lies in an intersection of pp-definable subgroup of $M$ of sort $X$. 

The pure-injectivity of $M$ in $\mathcal{T}$ is equivalent, by Corollary \ref{kraequivpi}, to the injectivity of its image $\mathbf{Y}(M)$. Thus, considering the canonical embedding $\mathbf{Y}(X)/\mathrm{ker}(\mathbf{Y}(f))\to \mathbf{Y}(M)$ given by $\mathbf{Y}(f)$, any morphism $\mathbf{Y}(X)/\mathrm{ker}(\mathbf{Y}(f))\to \mathbf{Y}(M)$ factors through an endomorphism of $\mathbf{Y}(M)$. Altogether this shows that the cyclic module $\mathrm{End}_{\mathcal{T}}(M)f$ is the intersection $\bigcap_{\mathscr{K}}\varphi_{\mathscr{K}}(M)$ where each pp-formula $\varphi_{\mathscr{K}}$ is given by $\varphi_{\mathscr{K}}(v_{G})=\exists u_{Z}\colon v_{G}=u_{Z}k$ in the above notation. 
\end{proof}
Hence we may now provide partial a converse to Corollary \ref{endonoetherperfectimpliessigmaPI}.
\begin{corollary}\label{sigmapiimpliesendoperfect}
Any $\Sigma$-pure-injective object $M$ of $\mathcal{T}$ is endoperfect. 
\end{corollary}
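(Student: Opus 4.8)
The plan is to feed the description of cyclic endo-submodules from Lemma \ref{cyclicendosubmodispp} into the descending chain condition on pp-definable subgroups supplied by Theorem \ref{characterisation}. Since every $\Sigma$-pure-injective object is in particular pure-injective, Lemma \ref{cyclicendosubmodispp} applies to $M$: for each compact object $X$ and each morphism $f\colon X\to M$, the cyclic submodule $\mathrm{End}_{\mathcal{T}}(M)f$ of $\mathcal{T}(X,M)$ is an intersection of pp-definable subgroups of $M$ of sort $X$. The first task is to improve ``intersection of pp-definable subgroups'' to ``pp-definable subgroup''.

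Fix a compact object $X$ and let $\mathcal{P}_{X}$ denote the set of pp-definable subgroups of $M$ of sort $X$, partially ordered by inclusion. By the equivalence of (1) and (5) in Theorem \ref{characterisation}, $M$ being $\Sigma$-pure-injective forces $\mathcal{P}_{X}$ to satisfy the descending chain condition. Moreover $\mathcal{P}_{X}$ is closed under finite intersections: for pp-formulas $\varphi$ and $\psi$ in one free variable of sort $X$ one has $\varphi(M)\cap\psi(M)=(\varphi\wedge\psi)(M)$ with $\varphi\wedge\psi$ again pp, while for the ``abuse of terminology'' description in terms of images $Mb$ one may invoke \cite[Proposition 3.1]{GarPre2005}, exactly as in the proof of Lemma \ref{annsubmodTdcc}. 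A standard lattice argument then shows that any intersection of members of $\mathcal{P}_{X}$ again lies in $\mathcal{P}_{X}$: the collection of finite sub-intersections is a non-empty subset of the poset $\mathcal{P}_{X}$, so by the descending chain condition it has a minimal element, and minimality forces that element to be contained in every member of the family, hence to equal the whole intersection. Equivalently, one can observe that the intersection produced in the proof of Lemma \ref{cyclicendosubmodispp} is a filtered decreasing one, indexed by the directed set of finitely generated subobjects of $\mathrm{ker}(\mathbf{Y}(f))$, and such an intersection is attained in a poset with the descending chain condition.

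Combining the two observations, for every compact $X$ and every $f\colon X\to M$ the cyclic $\mathrm{End}_{\mathcal{T}}(M)$-submodule $\mathrm{End}_{\mathcal{T}}(M)f$ of $\mathcal{T}(X,M)$ is itself a pp-definable subgroup of $M$ of sort $X$. Hence any descending chain $\mathrm{End}_{\mathcal{T}}(M)f_{1}\supseteq\mathrm{End}_{\mathcal{T}}(M)f_{2}\supseteq\cdots$ of cyclic submodules of $\mathcal{T}(X,M)$ is a descending chain of pp-definable subgroups of $M$ of sort $X$, and therefore stabilises by Theorem \ref{characterisation}(5). Thus $\mathcal{T}(X,M)$ has the descending chain condition on cyclic submodules for every compact object $X$, which by Definition \ref{defendoperfect} means exactly that $M$ is endoperfect. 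The only point needing genuine care is the reduction of an arbitrary intersection of pp-definable subgroups to a single pp-definable subgroup; the rest is a direct application of Lemma \ref{cyclicendosubmodispp} and of the equivalences already established in Theorem \ref{characterisation}.
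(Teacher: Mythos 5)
Your proposal is correct and follows essentially the same route as the paper: apply Lemma \ref{cyclicendosubmodispp} to express each cyclic $\mathrm{End}_{\mathcal{T}}(M)f$ as an intersection of pp-definable subgroups, use the descending chain condition from Theorem \ref{characterisation}(5) to reduce that intersection to a finite one (hence to a single pp-definable subgroup, by closure under finite intersection), and conclude the DCC on cyclic submodules. You merely make explicit the minimal-element argument that the paper compresses into ``and hence a finite intersection''.
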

\begin{proof} By the equivalence of (1) and (5) in Theorem \ref{characterisation} we have that for each compact object $X$ any descending chain of pp-definable subgroups of $M$ of sort $X$ must stabilise. By Lemma \ref{cyclicendosubmodispp} any cyclic $\mathrm{End}_{\mathcal{T}}(M)$-submodule of $\mathcal{T}(X,M)$ is an interesction, and hence a finite intersection, of such subgroups. Thus any cyclic $\mathrm{End}_{\mathcal{T}}(M)$-submodule of $\mathcal{T}(X,M)$ is a pp-definable subgroup of $M$ of sort $X$. So, by definition, and by the equivalence of (1) and (5) in Theorem \ref{characterisation}, any descending chain of cyclic $\mathrm{End}_{\mathcal{T}}(M)$-submodules  stabilises.
\end{proof}
The author would be interested in finding, if it exists, an endoperfect object in a compactly generated triangulated category which is not $\Sigma$-pure-injective. Of course, by Corollary \ref{endonoetherperfectimpliessigmaPI} such an example would not be endonoetherian.
\begin{definition}\cite[Definition 1.1]{Krause1999}\label{defendofinite}
An object $M$ of $\mathcal{T}$ is called \emph{endofinite} if $\mathcal{T}(X,M)$ has finite length over $\mathrm{End}_{\mathcal{T}}(M)$ for all compact objects $X$. 
\end{definition}
To conclude, we recover a result of Krause. Note endofinite objects are endorartinian, endonoetherian, hence endoperfect and $\Sigma$-pure-injective by Corollary \ref{endonoetherperfectimpliessigmaPI}.
\begin{theorem}\emph{\cite[Theorem 1.2(1)]{Krause1999}}
Every endofinite object of $\mathcal{T}$ decomposes into a coproduct of indecomposable objects with local endomorphism rings. 
\end{theorem}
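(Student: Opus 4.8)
The plan is to deduce this directly from Corollary \ref{endonoetherperfectimpliessigmaPI} together with the equivalence of statements (1) and (6) in Theorem \ref{characterisation}. First I would observe that if $M$ is endofinite then, for every compact object $X$, the left $\mathrm{End}_{\mathcal{T}}(M)$-module $\mathcal{T}(X,M)$ has finite composition length, and is therefore both artinian and noetherian. Being artinian, $\mathcal{T}(X,M)$ satisfies the descending chain condition on all submodules, in particular on cyclic submodules, so $M$ is endoperfect in the sense of Definition \ref{defendoperfect}; being noetherian, $M$ is endonoetherian in the sense of Definition \ref{defendonoether}. Hence $M$ is both endoperfect and endonoetherian.

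Next I would apply Corollary \ref{endonoetherperfectimpliessigmaPI}, which states that any endoperfect endonoetherian object is $\Sigma$-pure-injective; this shows $M$ is $\Sigma$-pure-injective. Finally I would invoke the equivalence of (1) and (6) in Theorem \ref{characterisation}: since $M$ is $\Sigma$-pure-injective, condition (1) holds, hence condition (6) holds, and taking $\mathtt{I}=\{0\}$ in (6) shows that $M\simeq M^{\{0\}}$ is isomorphic to a coproduct of indecomposable pure-injective objects with local endomorphism rings, which is exactly the assertion.

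Since every ingredient is already in place, I do not expect a genuine obstacle; the statement is a formal corollary of the machinery developed above. The only point needing a line of justification is the standard fact that a module of finite length satisfies both chain conditions, so that endofiniteness really does entail both endoperfection and the endonoetherian property required to invoke Corollary \ref{endonoetherperfectimpliessigmaPI}. Alternatively, one may bypass these two notions altogether and argue more directly: finite length of $\mathcal{T}(X,M)$ forces every descending chain of pp-definable subgroups of $M$ of sort $X$ (which are $\mathrm{End}_{\mathcal{T}}(M)$-submodules of $\mathcal{T}(X,M)$ by Lemma \ref{endodefinable}) to stabilise, whence $M$ is $\Sigma$-pure-injective by the equivalence of (1) and (5) in Theorem \ref{characterisation}, and one then concludes via the equivalence of (1) and (6) exactly as above.
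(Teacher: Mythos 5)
Your proposal is correct and follows essentially the same route as the paper, which likewise notes that endofinite objects are endoartinian and endonoetherian, hence endoperfect and endonoetherian, and then concludes via Corollary \ref{endonoetherperfectimpliessigmaPI} (whose proof is exactly the passage through conditions (5) and (6) of Theorem \ref{characterisation} that you describe). Your alternative direct argument via the descending chain condition on pp-definable subgroups is also valid and is in effect what Corollary \ref{endonoetherperfectimpliessigmaPI} does internally.
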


\bibliography{biblio}

\providecommand{\bysame}{\leavevmode\hbox to3em{\hrulefill}\thinspace}
\providecommand{\MR}{\relax\ifhmode\unskip\space\fi MR }
\providecommand{\MRhref}[2]{%
  \href{http://www.ams.org/mathscinet-getitem?mr=#1}{#2}
}
\providecommand{\href}[2]{#2}
\begin{thebibliography}{10}

\bibitem{Bas1960}
Hyman Bass, \emph{Finitistic dimension and a homological generalization of
  semi-primary rings}, Trans. Amer. Math. Soc. \textbf{95} (1960), 466--488.
  \MR{0157984}

\bibitem{Bau1976}
Walter Baur, \emph{Elimination of quantifiers for modules}, Israel J. Math.
  \textbf{25} (1976), no.~1-2, 64--70. \MR{0457194}

\bibitem{Bjo1969}
Jan-Erik Bj{\"o}rk, \emph{Rings satisfying a minimum condition on principal
  ideals.}, Journal f{\"u}r die reine und angewandte Mathematik \textbf{1969}
  (1969), no.~236, 112--119.

\bibitem{Cra1992}
William Crawley-Boevey, \emph{Modules of finite length over their endomorphism
  rings}, Representations of algebras and related topics (1992), 127--184.

\bibitem{DGN2016}
Pilar Dellunde, \`Angel Garc\'{\i}a-Cerda\~{n}a, and Carles Noguera,
  \emph{L\"{o}wenheim-{S}kolem theorems for non-classical first-order
  algebraizable logics}, Log. J. IGPL \textbf{24} (2016), no.~3, 321--345.
  \MR{3509921}

\bibitem{Fai1966}
Carl Faith, \emph{Rings with ascending condition on annihilators}, Nagoya Math.
  J. \textbf{27} (1966), 179--191. \MR{193107}

\bibitem{GarDun1994}
Jose~Luis Garcia and Nguyen~Viet Dung, \emph{Some decomposition properties of
  injective and pure-injective modules}, Osaka J. Math. \textbf{31} (1994),
  no.~1, 95--108. \MR{1262791}

\bibitem{GarPre2005}
Grigory Garkusha and Mike Prest, \emph{Triangulated categories and the
  {Z}iegler spectrum}, Algebr. Represent. Theory \textbf{8} (2005), no.~4,
  499--523. \MR{2199207}

\bibitem{Har1973}
Manabu Harada, \emph{Perfect categories. {IV}. {Q}uasi-{F}robenius categories},
  Osaka Math. J. \textbf{10} (1973), 585--596. \MR{367020}

\bibitem{Hod1997}
Wilfrid Hodges, \emph{A shorter model theory}, Cambridge University Press,
  Cambridge, 1997. \MR{1462612}

\bibitem{Hui1979}
Birge Huisgen-Zimmermann, \emph{Rings whose right modules are direct sums of
  indecomposable modules}, Proc. Amer. Math. Soc. \textbf{77} (1979), no.~2,
  191--197. \MR{542083}

\bibitem{Hui2000}
\bysame, \emph{Purity, algebraic compactness, direct sum decompositions, and
  representation type}, Infinite length modules ({B}ielefeld, 1998), Trends
  Math., Birkh\"{a}user, Basel, 2000, pp.~331--367. \MR{1789225}

\bibitem{HuiSao2002}
Birge Huisgen-Zimmermann and Manuel Saorín, \emph{Direct sums of
  representations as modules over their endomorphism rings}, Journal of Algebra
  \textbf{250} (2002), no.~1, 67--89.

\bibitem{JenLen1989}
Christian Jensen and Helmut Lenzing, \emph{Model-theoretic algebra with
  particular emphasis on fields, rings, modules}, Algebra, Logic and
  Applications, vol.~2, Gordon and Breach Science Publishers, New York, 1989.
  \MR{1057608}

\bibitem{Kra1997}
Henning Krause, \emph{The spectrum of a locally coherent category}, J. Pure
  Appl. Algebra \textbf{114} (1997), no.~3, 259--271. \MR{1426488}

\bibitem{Krause1999}
\bysame, \emph{Decomposing thick subcategories of the stable module category},
  Math. Ann \textbf{313} (1999), 95--108.

\bibitem{Kra2000}
\bysame, \emph{Smashing subcategories and the telescope conjecture---an
  algebraic approach}, Invent. Math. \textbf{139} (2000), no.~1, 99--133.
  \MR{1728877}

\bibitem{Kra2002}
\bysame, \emph{Coherent functors in stable homotopy theory}, Fund. Math.
  \textbf{173} (2002), no.~1, 33--56. \MR{1899046}

\bibitem{Nee1992}
Amnon Neeman, \emph{The connection between the {$K$}-theory localization
  theorem of {T}homason, {T}robaugh and {Y}ao and the smashing subcategories of
  {B}ousfield and {R}avenel}, Ann. Sci. \'{E}cole Norm. Sup. (4) \textbf{25}
  (1992), no.~5, 547--566. \MR{1191736}

\bibitem{Pre1988}
Mike Prest, \emph{Model theory and modules}, London Mathematical Society
  Lecture Note Series, vol. 130, Cambridge University Press, Cambridge, 1988.
  \MR{933092}

\bibitem{Zie1984}
Martin Ziegler, \emph{Model theory of modules}, Ann. Pure Appl. Logic
  \textbf{26} (1984), no.~2, 149--213. \MR{739577}

\end{thebibliography}
\bibliographystyle{amsplain}
\end{document}